\newcommand{\de}{\partial}
\newcommand{\dbar}{\overline{\partial}}
\newcommand{\Ric}{\mathrm{Ric}}
\newcommand{\vol}{\mathrm{Vol}}
\newcommand{\ve}{\varepsilon}
\renewcommand{\leq}{\leqslant}
\renewcommand{\geq}{\geqslant}
\newcommand{\be}{\begin{equation}}
\newcommand{\ee}{\end{equation}}
\begin{document}
\newcounter{remark}
\newcounter{theor}
\setcounter{theor}{1}
\newtheorem{claim}{Claim}
\newtheorem{theorem}{Theorem}[section]
\newtheorem{lemma}[theorem]{Lemma}
\newtheorem{corollary}[theorem]{Corollary}
\newtheorem{conjecture}[theorem]{Conjecture}
\newtheorem{proposition}[theorem]{Proposition}
\newtheorem{question}{Question}[section]
\newtheorem{definition}[theorem]{Definition}
\newtheorem{remark}[theorem]{Remark}

\numberwithin{equation}{section}

\title[Rigidity of eigenvalues]{The rigidity of eigenvalues on K\"ahler manifolds with positive Ricci lower bound}

\author[J. Chu]{Jianchun Chu}
\address[Jianchun Chu]{School of Mathematical Sciences, Peking University, Yiheyuan Road 5, Beijing 100871, People's Republic of China}
\email{jianchunchu@math.pku.edu.cn}

\author[F. Wang]{Feng Wang}
\address[Feng Wang]{School of Mathematical Sciences, Zhejiang University, Hangzhou 310013, People's Republic of China}
\email{wfmath@zju.edu.cn}

\author[K. Zhang]{Kewei Zhang}
\address[Kewei Zhang]{School of Mathematical Sciences, Beijing Normal University, Beijing 100875, People's Republic of China}
\email{kwzhang@bnu.edu.cn}

\begin{abstract}
In this work, optimal rigidity results for eigenvalues on K\"ahler manifolds with positive Ricci lower bound are established. More precisely, for those K\"ahler manifolds whose first eigenvalue agrees with the Ricci lower bound, we show that the complex projective space is the only one with the largest multiplicity of the first eigenvalue. Moreover, there is a specific gap between the largest and the second largest multiplicity. In the K\"ahler--Einstein case, almost rigidity results for eigenvalues are also obtained.
\end{abstract}

\maketitle

\section{Introduction}

\subsection{Background}

Comparison theorems in Riemannian geometry are indispensable and play significant roles in the solution of various problems in geometry analysis; see the textbook \cite{PeterLiBook12} for an exposition of this classical topic.

There has been great interest in extending Riemannian comparison theorems to the K\"ahler setting (see e.g. \cite{LW05,CN05,L11,TY12,L14,W15,BB17,LY18,NZ18,F18,L21,Z22,DSS21,DS23} and the references therein).
Due to the special feature of K\"ahler geometry, comparison theorems in this setting could behave differently from the Riemannian case.

For instance, Liu \cite{L14} proved that for a K\"ahler manifold $(M,g,J)$ of real dimension $m>2$ with $\Ric(g)\geq (m-1)g$, one has $\vol(M,g)\leq\big(1-\varepsilon(m)\big)\vol(\mathbb S^m,g_{\mathbb S^m})$, where $\varepsilon(m)>0$ is a dimensional constant.
Namely, the volume upper bound given by the Bishop--Gromov volume comparison is not sharp in the K\"ahler setting. Recently, the third-named author derived the optimal volume upper bound for K\"ahler manifolds with positive Ricci curvature, which generalizes \cite{BB17,F18}.
\begin{theorem}$(\text{\rm{Zhang} \cite{Z22}})\textbf{.}$
    Let $(M,\omega)$ be a compact K\"ahler manifold of complex dimension $n$ with $\Ric(\omega) \geq \omega$. Then one has $\vol(M,\omega) \leq \vol(\mathbb{CP}^n,\omega_{\mathbb{CP}^n})$, and the equality holds if and only if $(M,\omega)$ is biholomorphically isometric to $(\mathbb{CP}^n,\omega_{\mathbb{CP}^n})$. Here $\omega_{\mathbb{CP}^n}$ denotes the K\"ahler--Einstein metric on $\mathbb{CP}^n$ so that $\Ric(\omega_{\mathbb{CP}^n})=\omega_{\mathbb{CP}^n}$.
\end{theorem}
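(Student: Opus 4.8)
The plan is to convert the analytic volume inequality into a sharp bound on the anticanonical degree, to prove that bound by pluripotential methods that genuinely use the Ricci lower bound, and finally to analyze the equality case through the two inequalities that this reduction produces. I would begin by recording the cohomological content of the hypothesis. Since $\Ric(\omega)$ represents $2\pi c_1(M)$ and $\Ric(\omega)\ge\omega>0$, the manifold $M$ is Fano, both $[\omega]$ and $2\pi c_1(M)$ are K\"ahler, and the difference $2\pi c_1(M)-[\omega]$ is represented by the smooth semipositive form $\Ric(\omega)-\omega$, hence is nef. The geometry is thus squeezed between two positive classes.

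Applying the monotonicity of top self-intersection numbers of nef classes, via the telescoping identity $B^n-A^n=\sum_{i=0}^{n-1}A^i(B-A)B^{n-1-i}$ (each summand a nonnegative intersection of nef classes) with $A=[\omega]$ and $B=2\pi c_1(M)$, I obtain
\[
\vol(M,\omega)=\frac{1}{n!}\int_M\omega^n=\frac{[\omega]^n}{n!}\le\frac{(2\pi)^n c_1(M)^n}{n!}=\frac{(2\pi)^n(-K_M)^n}{n!}.
\]
Because $\vol(\mathbb{CP}^n,\omega_{\mathbb{CP}^n})=\frac{(2\pi)^n(n+1)^n}{n!}$ and $(-K_{\mathbb{CP}^n})^n=(n+1)^n$, the desired inequality follows at once from the sharp anticanonical degree bound $(-K_M)^n\le(n+1)^n$.

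The crux is precisely this degree bound, and here the hypothesis must enter essentially rather than formally: the argument cannot rely solely on the algebraic class, since $M$ is not assumed K\"ahler--Einstein or even K-semistable. Instead I would exploit the positively curved metric that $\omega$ induces on $-K_M$, whose curvature is exactly $\Ric(\omega)\ge\omega$, together with a Moser--Trudinger / Bergman-kernel estimate in the spirit of Berman--Berndtsson's convex-body bounds: the Ricci lower bound controls the growth of pluri-anticanonical sections and pins the leading asymptotics of $\dim H^0(M,-mK_M)=\frac{(-K_M)^n}{n!}m^n+O(m^{n-1})$ against the volume of an associated moment/Okounkov body, which is in turn maximized by the standard simplex. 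This is the step I expect to be the main obstacle, as it is exactly where the rigidity of K\"ahler geometry (the strict failure of the Riemannian Bishop bound noted for Liu's estimate above) is captured.

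Finally I would treat equality, which forces equality at both stages. Equality in the nef-monotonicity step requires in particular $[\omega]^{n-1}\cdot\big(2\pi c_1(M)-[\omega]\big)=0$, i.e. $\int_M(\Ric(\omega)-\omega)\wedge\omega^{n-1}=0$; since the integrand is a nonnegative multiple of $\omega^n$, this yields $\Ric(\omega)=\omega$, so $\omega$ is K\"ahler--Einstein and $[\omega]=2\pi c_1(M)$. Equality in the degree bound, $(-K_M)^n=(n+1)^n$, then identifies $M$ biholomorphically with $\mathbb{CP}^n$ through the rigidity half of the sharp bound (extremality of the simplex, equivalently the characterization of $\mathbb{CP}^n$ as the unique K-semistable Fano of maximal degree). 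The uniqueness of the K\"ahler--Einstein metric up to automorphism (Bando--Mabuchi) shows that $\omega$ coincides with $\omega_{\mathbb{CP}^n}$ after pulling back by a biholomorphism, so $(M,\omega)$ is biholomorphically isometric to $(\mathbb{CP}^n,\omega_{\mathbb{CP}^n})$.
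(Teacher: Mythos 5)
The first thing to note is that the paper does not actually prove this statement: it is quoted as background from \cite{Z22}, with the remark that its proof ``crucially uses techniques arising from the recent developments in the K-stability theory.'' So your proposal must be judged on its own terms, and it has a fatal gap at exactly the step you flagged as the crux. Your reduction bounds $[\omega]^n$ by $(2\pi)^n c_1(M)^n$ (the nef-monotonicity step is correct) and then requires the degree bound $(-K_M)^n\le (n+1)^n$ for the Fano manifold $M$. That bound is false in dimension $n\ge 4$: for the Fano fourfold $X=\mathbb{P}\big(\mathcal{O}_{\mathbb{CP}^3}\oplus\mathcal{O}_{\mathbb{CP}^3}(3)\big)$ one computes $(-K_X)^4=\tfrac{(4+3)^4-(4-3)^4}{3}=800>625=5^4$, and similar projective bundles give counterexamples in every dimension $n\ge4$. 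Moreover, no use of the hypothesis $\Ric(\omega)\ge\omega$ can rescue this step, because \emph{every} Fano manifold carries such a metric: by Yau's theorem choose a K\"ahler metric $\omega_0$ with $\Ric(\omega_0)>0$, pick $\varepsilon>0$ with $\Ric(\omega_0)\ge\varepsilon\omega_0$, and rescale to $\omega=\varepsilon\omega_0$, so that $\Ric(\omega)=\Ric(\omega_0)\ge\omega$. Hence the class of manifolds covered by the theorem is the class of \emph{all} Fano manifolds, and any intermediate claim depending only on $c_1(M)$ --- as your degree bound does --- would have to hold for all Fano manifolds, which it does not. Your Bergman-kernel/Okounkov-body plan cannot repair this: $\dim H^0(M,-mK_M)=\tfrac{(-K_M)^n}{n!}m^n+O(m^{n-1})$ is forced by Riemann--Roch independently of any metric, so no curvature hypothesis can depress the leading asymptotics below $800\,m^4/4!$ in the example above. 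The bound you want is Fujita's theorem \cite{F18}, and it genuinely requires K-semistability, which your hypothesis does not supply.

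The conceptual error is that $\Ric(\omega)\ge\omega$ constrains the particular class $[\omega]$, which may be far smaller than $2\pi c_1(M)$; your very first step, replacing $[\omega]^n$ by $(2\pi)^n c_1(M)^n$, discards precisely the information that makes the theorem true. Indeed, in the fourfold example the theorem asserts that every metric with $\Ric(\omega)\ge\omega$ satisfies $[\omega]^4\le (2\pi)^4\cdot 5^4$ even though $c_1(X)^4=800$, so the statement cannot factor through the anticanonical degree. For the same reason your equality analysis, although the trace argument $\int_M(\Ric(\omega)-\omega)\wedge\omega^{n-1}=0\Rightarrow\Ric(\omega)=\omega$ is itself correct, cannot be run, since the chain of inequalities it hangs on does not exist. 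The proof in \cite{Z22} never passes to $c_1(M)^n$: it bounds $[\omega]^n$ directly, interpreting $\Ric(\omega)\ge\omega$ as a twisted positivity condition on the pair $(M,[\omega])$ and running Fujita-type valuative (delta-invariant) volume estimates in that twisted setting, with the equality case then identifying $(M,\omega)$ with $(\mathbb{CP}^n,\omega_{\mathbb{CP}^n})$. A corrected version of your plan would have to apply the Okounkov-body/valuation machinery to the class $[\omega]$ twisted by the semipositive form $\Ric(\omega)-\omega$, not to $-K_M$.
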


See \cite{W15,Z22,DSS21} for further developments regarding the almost rigidity of volume. To prove these results one crucially uses techniques arising from the recent developments in the K-stability theory, which are completely different from those used in the Riemannian setting.

Similarly, a K\"ahlerian diameter comparison theorem is proved in \cite{LW05,DS23}.

\begin{theorem}$(\text{\rm{Li--Wang} \cite{LW05}}, \text{\rm{Datar--Seshadri} \cite{DS23}})\textbf{.}$
    Let $(M,\omega)$ be a compact K\"ahler manifold of complex dimension $n$ whose bisectional curvature satisfies $BK\geq\frac{1}{n+1}$. Then $\mathrm{diam}(M,\omega)\leq\mathrm{diam}(\mathbb{CP}^n,\omega_{\mathbb{CP}^n})$. The equality holds if and only if $(M,\omega)$ is biholomorphically isometric to $(\mathbb{CP}^n,\omega_{\mathbb{CP}^n})$.
\end{theorem}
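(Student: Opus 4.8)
The plan is to prove the inequality by a Kähler refinement of the Bonnet--Myers argument that uses the complex structure to single out the most efficient variation field, and then to extract rigidity from the equality case together with the classification of complex space forms.

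First I would record what the curvature hypothesis buys us. Writing $H(X)=R(X,JX,JX,X)$ for the holomorphic sectional curvature of a unit vector $X$, the bound $BK\geq\frac{1}{n+1}$ on the holomorphic bisectional curvature yields, in particular, the sharp lower bound $H(X)\geq\frac{2}{n+1}$, which is precisely the constant holomorphic sectional curvature of the model $(\mathbb{CP}^n,\omega_{\mathbb{CP}^n})$ normalized so that $\Ric(\omega_{\mathbb{CP}^n})=\omega_{\mathbb{CP}^n}$. In that normalization a complex line $\mathbb{CP}^1\subset\mathbb{CP}^n$ is totally geodesic and isometric to a round $2$-sphere of Gauss curvature $\frac{2}{n+1}$, so $\diam(\mathbb{CP}^n,\omega_{\mathbb{CP}^n})=\pi\sqrt{(n+1)/2}$, and this diameter is realized along such a complex line. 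This is the geometric reason the problem is genuinely Kählerian: the naive Ricci bound $\Ric(\omega)\geq\frac{n}{n+1}\omega$ implied by the hypothesis is far too weak, whereas the $J\gamma'$-direction sees the full (doubled) curvature.

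Next the Myers step. Let $\gamma:[0,L]\to M$ be a unit-speed minimizing geodesic with $L=d(\gamma(0),\gamma(L))$ and set $T=\gamma'$. Since $J$ is parallel and $\gamma$ is a geodesic, $JT$ is parallel along $\gamma$, so I would test the index form with $V(t)=\sin(\pi t/L)\,JT(t)$, which vanishes at both endpoints. Because $\nabla_T V=(\pi/L)\cos(\pi t/L)\,JT$ and $\langle R(V,T)T,V\rangle=\sin^2(\pi t/L)\,H(T)$, the second variation becomes
\begin{equation}
I(V,V)=\int_0^L\Big[\tfrac{\pi^2}{L^2}\cos^2\tfrac{\pi t}{L}-\sin^2\tfrac{\pi t}{L}\,H(T)\Big]\,dt\leq\frac{L}{2}\Big(\frac{\pi^2}{L^2}-\frac{2}{n+1}\Big).
\end{equation}
If $L>\pi\sqrt{(n+1)/2}$ the right-hand side is negative, so $I(V,V)<0$, contradicting the minimality of $\gamma$. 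Hence every minimizing geodesic has length at most $\pi\sqrt{(n+1)/2}$, giving $\diam(M,\omega)\leq\diam(\mathbb{CP}^n,\omega_{\mathbb{CP}^n})$.

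For rigidity I would analyze the equality case. If $\diam(M,\omega)=\diam(\mathbb{CP}^n,\omega_{\mathbb{CP}^n})$, choose $p,q$ realizing the diameter and a minimizing geodesic $\gamma$ between them of length $L=\pi\sqrt{(n+1)/2}$; the displayed estimate then forces $I(V,V)=0$, so $V$ is a Jacobi field and $H(T(t))\equiv\frac{2}{n+1}$ along $\gamma$, with the curvature acting on $V$ as $\frac{2}{n+1}\,\mathrm{Id}$. Combined with $BK\geq\frac{1}{n+1}$, this pins the curvature tensor to the model along $\gamma$. The crux is to upgrade this from one geodesic to global constant holomorphic sectional curvature $\frac{2}{n+1}$: I would run the same test-field computation along all minimizing geodesics from $p$ and use a maximal-diameter (Cheng-type) argument to show that the Hessian of $r=d(p,\cdot)$ agrees with the $\mathbb{CP}^n$ model everywhere, so that $M$ has constant holomorphic sectional curvature $\frac{2}{n+1}$. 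Since positive holomorphic bisectional curvature forces $M\cong\mathbb{CP}^n$ biholomorphically by the Frankel conjecture (Mori, Siu--Yau), in particular $M$ is simply connected, and the classification of complete simply connected complex space forms identifies $(M,\omega)$ with $(\mathbb{CP}^n,c\,\ofs)$; matching the constant holomorphic sectional curvature $\frac{2}{n+1}$ (equivalently $\Ric(\omega)=\omega$) fixes the scale and yields a biholomorphic isometry with $(\mathbb{CP}^n,\omega_{\mathbb{CP}^n})$.

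I expect the propagation step in the rigidity argument to be the main obstacle: passing from a vanishing index form along a single diameter-realizing geodesic to pointwise constant holomorphic sectional curvature on all of $M$. The clean way to handle it is a Hessian-comparison rigidity, showing that equality in the Myers estimate propagates the model Riccati solution for $\nabla^2 r$ along every radial geodesic, which simultaneously yields global constancy of the curvature and the explicit identification of the metric; the analytic subtlety there is controlling the cut locus of $p$, exactly as in Cheng's maximal diameter theorem, now in the holomorphic-sectional-curvature setting.
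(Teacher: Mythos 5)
First, a point of orientation: the paper does not prove this theorem — it is quoted background, with the inequality due to Li--Wang \cite{LW05} and the equality case due to Datar--Seshadri \cite{DS23}, whose proof goes through pluripotential theory (the paper emphasizes this precisely because the classical Riemannian route does not survive in the K\"ahler setting). Your first half is essentially correct: under the normalization used in \cite{LW05,DS23}, where the bisectional curvature is divided by $|X|^2|Y|^2+\langle X,Y\rangle^2+\langle X,JY\rangle^2$ (so that the model has \emph{constant} $BK\equiv\frac{1}{n+1}$), taking $Y=X$ does give $H(X)\geq\frac{2}{n+1}$, and the Tsukamoto/Myers argument with the test field $\sin(\pi t/L)\,JT$ yields $\diam(M,\omega)\leq\pi\sqrt{(n+1)/2}=\diam(\mathbb{CP}^n,\omega_{\mathbb{CP}^n})$. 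You should state that convention explicitly: under the ``plain'' convention $R(X,JX,JY,Y)\geq\frac{1}{n+1}$ for unit $X,Y$, you would only get $H\geq\frac{1}{n+1}$ and your argument would miss the sharp constant.

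The rigidity half has a genuine gap, exactly at the step you flagged, and the fix you propose cannot work. Equality gives you a null Jacobi field and $H(T)\equiv\frac{2}{n+1}$ only along minimizing geodesics joining the \emph{one} pair $(p,q)$ realizing the diameter; contrary to what you write, this does not ``pin the curvature tensor to the model along $\gamma$'' — knowing $H$ in the single direction $T$ together with $BK\geq\frac{1}{n+1}$ only determines the sectional curvature of the planes $(T,JT)$. More seriously, the Cheng-type propagation is structurally unavailable here, for a reason visible in the model itself: in $\mathbb{CP}^n$ the minimizing geodesics from $p$ to a fixed point $q$ at maximal distance sweep out only the complex line through $p$ and $q$, and the excess $e(x)=d(p,x)+d(q,x)-\diam$ is \emph{not} identically zero (it vanishes exactly on that $\mathbb{CP}^1$). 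Correspondingly, the identity that drives Cheng's maximal diameter theorem fails. If $F(r)$ denotes the model Laplacian of the distance function (one radial $2$-plane of curvature $c=\frac{2}{n+1}$, and $2n-2$ directions of curvature $c/4$), then with $L=\pi/\sqrt{c}$ one computes
\[
F(r)+F(L-r)=(n-1)\sqrt{c}\left(\cot\frac{\sqrt{c}\,r}{2}+\tan\frac{\sqrt{c}\,r}{2}\right)=\frac{2(n-1)\sqrt{c}}{\sin(\sqrt{c}\,r)}>0\qquad(n\geq2),
\]
in contrast with the spherical identity $F(r)+F(\pi-r)=0$. Hence the Laplacian comparison applied to $d(p,\cdot)$ and $d(q,\cdot)$ gives $\Delta e\leq$ (a positive quantity), no maximum principle forces $e\equiv0$, and no such argument can exist since $e\not\equiv0$ already in the model. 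There is therefore no mechanism to propagate the equality information off a single geodesic: your ``Riccati solution along every radial geodesic'' would require knowing that every radial geodesic from $p$ ends at a point at maximal distance, which is essentially what one is trying to prove. This obstruction is precisely why the equality case remained open for nearly two decades after \cite{LW05}, and why \cite{DS23} (and, under additional hypotheses, \cite{TY12,LY18}) resort to genuinely non-Riemannian tools — pluripotential theory, respectively the geometry of submanifolds — rather than a maximal-diameter argument.
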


Li--Wang \cite{LW05} established the diameter inequality. The equality case was proved by Datar--Seshadri \cite{DS23} whose proof used pluripotential theory, which is again different from the classical Riemannian setting; compare also \cite{TY12,LY18}, whose proof uses the geometry of submanifolds.

These new comparison theorems reveal special features of K\"ahler geometry, making the research in the direction even more appealing.
In this paper we are interested in the eigenvalue comparison theorems on K\"ahler manifolds with positive Ricci lower bound.

To begin with, we first recall the Riemannian setting.
Let $(M,g)$ be a compact Riemannian manifold of real dimension $m$. In this paper, we assume that all the manifolds considered are connected. One of the most important differential operators on $(M,g)$ is the Laplace-Beltrami operator, which is defined by
\[
\Delta_{\mathbb{R}}u = \frac{1}{\sqrt{\det g}}\frac{\de}{\de x^{i}}\left(\sqrt{\det g} \, g^{ij}\frac{\de u}{\de x^{j}}\right), \ \ \ \text{for $u\in C^{\infty}(M,\mathbb{R})$}.
\]
The first eigenvalue $\lambda_1$ of $\Delta_{\mathbb{R}}$ can be characterized as
\[
\lambda_1 = \inf\left\{\frac{\int_{M}|\nabla u|^{2}dg}{\int_{M}u^{2} dg}~\Big|~u\in W^{1,2}(M,g), \ \int_{M}u dg= 0 \right\},
\]
where $W^{1,2}(M,g)$ denotes the Sobolev space of $(M,g)$.

It is well-known that the $m$-dimensional unit sphere $(\mathbb{S}^{m},g_{\mathbb{S}^{m}})$ has constant sectional curvature one (which implies $\Ric(g_{\mathbb{S}^{m}})=(m-1)g_{\mathbb{S}^{m}}$) and $\lambda_1=m$. Then $(\mathbb{S}^{m},g_{\mathbb{S}^{m}})$ becomes a ``comparison model" in Riemannian geometry. Recall the following classical estimate for the first eigenvalue.

\begin{theorem}$(\text{\rm{Lichnerowicz} \cite{Li58}})\textbf{.}$
Let $(M,g)$ be a compact Riemannian manifold of real dimension $m$ with $\Ric(g)\geq (m-1)g$. Then $\lambda_1\geq m$.
\end{theorem}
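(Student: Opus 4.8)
The plan is to run the classical Bochner technique on a first eigenfunction. Let $u$ be a first eigenfunction, so that $\Delta_{\mathbb{R}}u = -\lambda_1 u$ with $\int_M u\,dg=0$; since $u$ is nonconstant we have $\lambda_1>0$. The starting point is the Bochner formula
\[
\tfrac{1}{2}\Delta_{\mathbb{R}}|\nabla u|^2 = |\nabla^2 u|^2 + \langle \nabla u, \nabla(\Delta_{\mathbb{R}}u)\rangle + \Ric(\nabla u, \nabla u),
\]
which I would integrate over the closed manifold $M$. Because the integral of a Laplacian vanishes on a closed manifold, the left-hand side drops out, leaving an identity among the three terms on the right.

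Next I would estimate each of these three terms. For the middle term, substituting $\Delta_{\mathbb{R}}u=-\lambda_1 u$ gives $\int_M\langle \nabla u,\nabla(\Delta_{\mathbb{R}}u)\rangle = -\lambda_1\int_M|\nabla u|^2$, and I record the standard identity $\int_M|\nabla u|^2 = \lambda_1\int_M u^2$ coming from integration by parts. For the curvature term, the hypothesis $\Ric(g)\geq (m-1)g$ yields $\int_M \Ric(\nabla u,\nabla u)\geq (m-1)\int_M|\nabla u|^2$. For the Hessian term I would apply the Cauchy--Schwarz inequality to the trace $\Delta_{\mathbb{R}}u=\operatorname{tr}(\nabla^2 u)$, obtaining the pointwise bound $|\nabla^2 u|^2 \geq \tfrac{1}{m}(\Delta_{\mathbb{R}}u)^2 = \tfrac{\lambda_1^2}{m}u^2$; this is precisely the step through which the dimension $m$ enters.

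Assembling the three estimates into the integrated Bochner identity gives
\[
0 \geq \frac{\lambda_1^2}{m}\int_M u^2 - \lambda_1\int_M|\nabla u|^2 + (m-1)\int_M|\nabla u|^2,
\]
and after replacing $\int_M|\nabla u|^2$ by $\lambda_1\int_M u^2$ and dividing through by the positive quantity $\lambda_1\int_M u^2$, one is left with the scalar inequality $\tfrac{\lambda_1}{m}-\lambda_1+(m-1)\leq 0$. Rearranging this as $\lambda_1\cdot\tfrac{m-1}{m}\geq m-1$ produces exactly $\lambda_1\geq m$, as desired.

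The computations are entirely routine once the Bochner formula is in hand; the only substantive ingredients are the Bochner--Weitzenb\"ock identity itself and the elementary trace inequality $|\nabla^2 u|^2\geq (\Delta_{\mathbb{R}}u)^2/m$, which together make the constant $m$ sharp. I do not anticipate any serious obstacle, so long as one is careful with the sign convention for $\Delta_{\mathbb{R}}$ and tracks the eigenvalue relation $\Delta_{\mathbb{R}}u=-\lambda_1 u$ consistently throughout.
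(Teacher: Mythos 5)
Your proof is correct, and it is the classical Bochner-technique argument for Lichnerowicz's estimate. Note, however, that the paper does not prove this statement at all: it is quoted as a classical background result from Lichnerowicz \cite{Li58}, so there is no internal proof to compare against. Your route --- integrating the Bochner formula $\tfrac12\Delta_{\mathbb{R}}|\nabla u|^2 = |\nabla^2u|^2 + \langle\nabla u,\nabla\Delta_{\mathbb{R}}u\rangle + \Ric(\nabla u,\nabla u)$, plugging in $\Delta_{\mathbb{R}}u=-\lambda_1u$, bounding the Ricci term from below by $(m-1)\int|\nabla u|^2$, and using the trace inequality $|\nabla^2u|^2\geq(\Delta_{\mathbb{R}}u)^2/m$ --- is exactly the standard proof in the literature, and your sign conventions are consistent with the paper's definition of $\Delta_{\mathbb{R}}$ (divergence form, eigenvalue equation $\Delta_{\mathbb{R}}u=-\lambda u$ with $\lambda>0$). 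It is worth observing that your computation is the precise Riemannian parallel of what the paper does carry out in the K\"ahler setting in Lemma \ref{eigenfunction lemma}, where the same integration by parts applied to an eigenfunction of eigenvalue $1$ yields the rigidity conclusions $u_{ij}=0$ and $\iota_{\nabla^{1,0}u}\theta=0$ rather than an eigenvalue lower bound. Two minor points: your final step divides by $m-1$, so the argument (and indeed the theorem itself, witness a circle of large circumference) requires $m\geq 2$, an implicit hypothesis you should state; and the smoothness of $u$ used in the Bochner formula is guaranteed by elliptic regularity, which you may want to mention in passing.
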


One can further characterize the equality case, obtaining the rigidity for the first eigenvalue on manifolds with positive Ricci lower bound.

\begin{theorem}\label{Riemannian rigidity}$(\text{\rm{Obata} \cite{Ob62}})\textbf{.}$\label{rigidity Riemannian case}
Let $(M,g)$ be a compact Riemannian manifold of real dimension $m$ with $\Ric(g)\geq (m-1)g$. If $\lambda_1=m$, then $(M,g)$ is isometric to $(\mathbb{S}^{m},g_{\mathbb{S}^{m}})$.
\end{theorem}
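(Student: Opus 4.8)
The plan is to run the classical Bochner--Lichnerowicz argument to first recover, in the equality case, the overdetermined Obata equation $\nabla^2 u = -u\,g$ for a first eigenfunction, and then to exploit this equation to reconstruct the round metric. Let $u$ be a first eigenfunction normalized so that $\Delta_{\R} u = -m u$, and recall the Bochner formula
\[
\tfrac{1}{2}\Delta_{\R}|\nabla u|^{2} = |\nabla^{2} u|^{2} + \langle \nabla u, \nabla \Delta_{\R} u\rangle + \Ric(\nabla u,\nabla u).
\]
Integrating over the closed manifold $M$ kills the left-hand side. Using $\Delta_{\R} u = -mu$ one computes $\int_{M}\langle \nabla u, \nabla \Delta_{\R} u\rangle = -\int_{M}(\Delta_{\R} u)^{2} = -m^{2}\int_{M}u^{2}$ and $\int_{M}|\nabla u|^{2} = m\int_{M}u^{2}$. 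Splitting the Hessian into its trace and trace-free parts yields the pointwise inequality $|\nabla^{2}u|^{2}\geq \tfrac{1}{m}(\Delta_{\R}u)^{2}$, whence $\int_{M}|\nabla^{2}u|^{2}\geq m\int_{M}u^{2}$, while the hypothesis $\Ric(g)\geq (m-1)g$ gives $\int_{M}\Ric(\nabla u,\nabla u)\geq (m-1)m\int_{M}u^{2}$. Substituting these three estimates into the integrated identity produces $0 \geq \big(m - m^{2} + (m-1)m\big)\int_{M}u^{2} = 0$, forcing every inequality to be an equality. (This recovers Lichnerowicz's bound $\lambda_1 \geq m$ en route, but here the content is the saturation.)

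The equality in the Hessian decomposition forces $\nabla^{2}u = \tfrac{\Delta_{\R}u}{m}\,g = -u\,g$ everywhere. The remaining step is purely geometric: I would show that the existence of a non-constant $u$ with $\nabla^{2}u = -u\,g$ forces $(M,g)$ to be the unit round sphere. First, a direct computation using $\nabla^{2}u = -u\,g$ gives
\[
\nabla\big(|\nabla u|^{2} + u^{2}\big) = 2\,\nabla^{2}u(\nabla u,\cdot) + 2u\,\nabla u = -2u\,\nabla u + 2u\,\nabla u = 0,
\]
so $|\nabla u|^{2} + u^{2}$ is a constant. After rescaling $u$ I may assume this constant equals $1$, so that $u$ takes values in $[-1,1]$, with $\max u = 1$ attained at a point $p$ (where $\nabla u = 0$) and $\min u = -1$ at a point $q$.

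Along any unit-speed geodesic $\gamma$ emanating from $p$, the function $f(s) = u(\gamma(s))$ satisfies $f''(s) = \nabla^{2}u(\gamma',\gamma') = -f(s)$ with $f(0)=1$ and $f'(0)=0$, hence $f(s) = \cos s$. Thus $u = \cos r$, where $r = d(p,\cdot)$, so $u$ depends only on the distance from $p$. Writing $\nabla u = -\sin r\,\nabla r$ and differentiating again, the equation $\nabla^{2}u = -\cos r\,g$ becomes, after simplification,
\[
\nabla^{2} r = \cot r\,\big(g - dr\otimes dr\big),
\]
which is exactly the Hessian of the distance function on the unit sphere. Integrating this along the radial geodesics identifies the metric in geodesic polar coordinates about $p$ with $dr^{2} + \sin^{2}r\,g_{\mathbb{S}^{m-1}}$, the round metric, so that $(M,g)$ is isometric to $(\mathbb{S}^{m},g_{\mathbb{S}^{m}})$.

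The Bochner computation of the first paragraph is entirely routine; the genuine difficulty lies in the geometric reconstruction of the last paragraph. The delicate point is to promote the Hessian equation for $r$ from a local, radial statement into a \emph{global} isometry: one must control the cut locus of $p$, verify that no conjugate point occurs before distance $\pi$, and confirm that $q$ is the unique point at which the minimum is attained, so that the exponential map at $p$ yields a bona fide isometry onto the round sphere rather than a mere local one. This is precisely the content of Obata's original analysis, and it is where the equality in the Ricci estimate is implicitly consumed.
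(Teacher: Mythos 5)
This theorem is quoted in the paper as a classical result with a citation to Obata \cite{Ob62}; the paper itself contains no proof of it, so there is no internal argument to compare yours against. Your proposal is the standard Bochner--Lichnerowicz--Obata argument, and its first half is complete and correct: integrating the Bochner formula and combining $\Delta_{\mathbb{R}}u=-mu$, the trace inequality $|\nabla^{2}u|^{2}\geq\frac{1}{m}(\Delta_{\mathbb{R}}u)^{2}$, and $\Ric(g)\geq(m-1)g$ forces every inequality to be an equality, which yields the Obata equation $\nabla^{2}u=-u\,g$, the conservation law $|\nabla u|^{2}+u^{2}\equiv\mathrm{const}$, and $u=\cos r$ with $r=d(p,\cdot)$.

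The second half, however, is a plan rather than a proof: you derive the Hessian equation $\nabla^{2}r=\cot r\,(g-dr\otimes dr)$ and then explicitly defer the cut-locus and conjugate-point analysis to ``Obata's original analysis.'' Since that is exactly the crux of the rigidity statement, as written the proposal has a genuine gap. It can be closed more cheaply than you suggest, using the conservation law you already have. Normalizing $|\nabla u|^{2}+u^{2}=1$ gives $|\nabla u|^{2}=\sin^{2}r$, so $u$ has no critical points where $|u|<1$; hence $r=\arccos u$ is smooth on $\{0<r<\pi\}$ (note $r\leq\pi$ by Myers). Since the distance function from $p$ fails to be smooth precisely on $\mathrm{Cut}(p)$, this forces $\mathrm{Cut}(p)\subseteq\{u=-1\}$; and points with $u=-1$ are nondegenerate minima of $u$ (there $\nabla^{2}u=g>0$), hence isolated, hence finite in number. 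As $M\setminus\mathrm{Cut}(p)$ is diffeomorphic to $\mathbb{R}^{m}$ via $\exp_{p}$ and $m\geq2$, counting ends shows $\mathrm{Cut}(p)$ is a single point $q$; no conjugate point can occur before $r=\pi$ because conjugate points cannot precede cut points. The Hessian equation then integrates in polar coordinates to $g=dr^{2}+\sin^{2}r\,g_{\mathbb{S}^{m-1}}$ on $M\setminus\{p,q\}$, which is the round metric, and it extends smoothly across $p$ and $q$. One further small correction: the equality case of the Ricci estimate is \emph{not} ``implicitly consumed'' in this reconstruction; once the Bochner step produces $\nabla^{2}u=-u\,g$, Obata's rigidity needs only completeness and that single equation --- no curvature hypothesis enters afterwards.
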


The goal of this paper is to establish analogous results for K\"ahler manifolds. As we shall see, new phenomenon occurs in the K\"ahler setting, requiring a different way of approaching the problem.

\subsection{Eigenvalue comparison and rigidity}
Let $(M,\omega)$ be a compact K\"ahler manifold of complex dimension $n$ and write $\omega=\sqrt{-1}g_{i\bar{j}}dz^{i}\wedge d\bar{z}^{j}$. As usual, we consider the complex Laplace operator defined by
\[
\Delta u = \frac{n\sqrt{-1}\de\dbar u\wedge\omega^{n-1}}{\omega^{n}} = g^{i\bar{j}}\de_{i}\de_{\bar{j}} u, \ \ \ \text{for $u\in C^{\infty}(M,\mathbb{R})$}.
\]
The relationship between two Laplace operators is given by $\Delta_{\mathbb{R}}=2\Delta$. Denote the eigenvalues of $\Delta$ by
\[
0 < \lambda_{1} \leq \lambda_{2} \leq \ldots \leq \lambda_{i} \leq \ldots
\]

A reasonable counterpart of $\mathbb{S}^{m}$ in K\"ahler geometry is $\mathbb{CP}^{n}$, which can be equipped with a standard K\"ahler--Einstein metric $\omega_{\mathbb{CP}^{n}}$ that is proportional to Fubini--Study metric so that $\Ric(\omega_{\mathbb{CP}^{n}})=\omega_{\mathbb{CP}^{n}}$. For $(\mathbb{CP}^{n},\omega_{\mathbb{CP}^{n}})$, one has $\lambda_1=...=\lambda_{n^2+2n}=1$ and $\lambda_{n^2+2n+1}>1$ (see Lemma \ref{lem:Cpn-multiplicity}). The next theorem is well-known, which can be viewed as an eigenvalue comparison theorem in K\"ahler geometry.

\begin{theorem}$(\emph{see e.g. } \text{\rm{Futaki} \cite[Theorem 2.4.5]{Fu88}})\textbf{.}$\label{lambda 1 estimate}
Let $(M,\omega)$ be a compact K\"ahler manifold of complex dimension $n$ with $\Ric(\omega)\geq \omega$. Then $\lambda_{1}\geq 1$.
\end{theorem}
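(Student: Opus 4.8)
The plan is to run the Bochner technique adapted to the complex Laplacian. Let $u$ be a first eigenfunction, which we may take to be real with $\int_M u\,\omega^n = 0$, normalized so that $\Delta u = -\lambda_1 u$ (consistent with $\lambda_1$ being the smallest positive eigenvalue). Writing $u_i = \de_i u$, $u_{i\bar j} = \de_i\de_{\bar j}u$ for the complex Hessian, and $u_{ij} = \nabla_i\nabla_j u$ for the holomorphic (covariant) Hessian, the goal is to produce an integral identity into which the curvature lower bound $\Ric(\omega)\ge\omega$, i.e. $R_{i\bar j}\ge g_{i\bar j}$, can be inserted at the end.

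Concretely, the key step is to establish the integrated Bochner--Kodaira identity
\[
\int_M |u_{ij}|^2\,\omega^n = \int_M |\Delta u|^2\,\omega^n - \int_M R^{i\bar j}u_i u_{\bar j}\,\omega^n.
\]
I would prove this by starting from $\int_M |u_{ij}|^2\,\omega^n = \int_M g^{i\bar k}g^{j\bar l}u_{ij}\,\overline{u_{kl}}\,\omega^n$, integrating by parts to move one covariant derivative off the holomorphic Hessian, and then commuting the resulting mixed covariant derivatives. On a K\"ahler manifold the $(2,0)$-part of the curvature vanishes, so the only surviving commutator is the one exchanging a holomorphic and an antiholomorphic derivative; the Ricci identity $[\nabla_i,\nabla_{\bar j}]u_k = -R_{i\bar j k}{}^{m}u_m$ converts this, after tracing with $g^{j\bar l}$ and using the K\"ahler symmetries of $R_{i\bar j k\bar l}$, into exactly the Ricci contraction $R^{i\bar j}u_i u_{\bar j}$, while the remaining third-derivative term reassembles into $\int_M|\Delta u|^2\,\omega^n$ after a second integration by parts.

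With the identity in hand the conclusion is immediate. The left-hand side is nonnegative, and the curvature hypothesis gives $R^{i\bar j}u_iu_{\bar j}\ge g^{i\bar j}u_iu_{\bar j} = |\de u|^2$, whence $\int_M R^{i\bar j}u_iu_{\bar j}\,\omega^n \ge \int_M |\de u|^2\,\omega^n = -\int_M u\,\Delta u\,\omega^n = \lambda_1\int_M u^2\,\omega^n$. Since $\int_M|\Delta u|^2\,\omega^n = \lambda_1^2\int_M u^2\,\omega^n$, the identity yields $0\le (\lambda_1^2 - \lambda_1)\int_M u^2\,\omega^n$, and as $u\not\equiv 0$ this forces $\lambda_1\ge 1$.

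I expect the main obstacle to be the careful bookkeeping in the Bochner identity: getting the commutation of covariant derivatives and the sign of the curvature term right, and confirming that the third-derivative contributions genuinely recombine into $\int_M|\Delta u|^2\,\omega^n$ without leaving an error term. The curvature hypothesis itself enters only at the very last, purely algebraic step; everything upstream is the integration-by-parts identity, which is where I would concentrate the effort. As a sanity check, equality $\lambda_1=1$ forces $u_{ij}\equiv 0$, i.e. $\nabla^{1,0}u$ is a holomorphic vector field, which is precisely the expected rigidity picture on $\mathbb{CP}^n$.
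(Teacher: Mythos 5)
Your proof is correct, and it is essentially the same Bochner--Kodaira argument underlying the paper's treatment: the paper cites Futaki for this theorem, but its own Lemma \ref{eigenfunction lemma} carries out exactly your integration-by-parts/commutation computation (with the same sign conventions, e.g. $u_{k\bar{l}i}=u_{ki\bar{l}}-R_{k\bar{j}i\bar{l}}u^{\bar{j}}$), specialized to $\lambda=1$, where it yields $0=\int_M|u_{ij}|^2\,\omega^n+\int_M\theta_{i\bar{j}}u^iu^{\bar{j}}\,\omega^n$ with $\theta=\Ric(\omega)-\omega\geq 0$. Your identity $\int_M|u_{ij}|^2\,\omega^n=\int_M|\Delta u|^2\,\omega^n-\int_M R^{i\bar{j}}u_iu_{\bar{j}}\,\omega^n$ is valid, and your closing observation that equality forces $\nabla^{1,0}u$ to be holomorphic is precisely part (a) of that lemma.
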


Motivated by Theorem \ref{Riemannian rigidity}, in the equality case $\lambda_{1}=1$, it is very natural to expect that $(M,\omega)$ is biholomorphically isometric to $(\mathbb{CP}^{n},\omega_{\mathbb{CP}^{n}})$. However, it is well-known that this fails in the K\"ahler setting. In fact, the first eigenvalue of any K\"ahler--Einstein Fano manifold admitting a nontrivial holomorphic vector field is equal to one \cite{M57}. An explicit counterexample is as follows (see Section \ref{examples} for details): for positive integers $k$, $l$ such that $k+l=n$ and constant $a\in[0,1)$, consider
\begin{equation}\label{counterexample}
(M,\omega) =
(\mathbb{CP}^{k},\omega_{\mathbb{CP}^{k}})\times\big(\mathbb{CP}^{l},(1-a)\omega_{\mathbb{CP}^{l}}\big).
\end{equation}
It can be shown that $\Ric(\omega)\geq \omega$ and $\lambda_{1}=...=\lambda_{k^2+2k}=1$, but $M\ncong\mathbb{CP}^n$. In the special case where $a=0$ the metric $\omega$ is even K\"ahler--Einstein with $\Ric(\omega)=\omega$ and $\lambda_1=...=\lambda_{k^{2}+2k+l^{2}+2l}=1$, but still $M\ncong\mathbb{CP}^n$.

These examples suggest that, to establish rigidity results for eigenvalues in the K\"ahler setting, we need to take multiplicities of eigenvalues into account as well. In what follows we always assume $n\geq2$, since a one-dimensional K\"ahler manifold with positive Ricci curvature must be $\mathbb{CP}^1$ and hence the rigidity result follows from Theorem \ref{rigidity Riemannian case}. Here is our first main result.

\begin{theorem}\label{CPn rigidity}
Let $(M,\omega)$ be a compact K\"ahler manifold of complex dimension $n$ with $\Ric(\omega)\geq \omega$. If $\lambda_{n^2+3}=1$, then
$(M,\omega)$ is biholomorphically isometric to $(\mathbb{CP}^{n},\omega_{\mathbb{CP}^{n}})$.
\end{theorem}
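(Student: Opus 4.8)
The plan is to convert the analytic hypothesis on eigenvalues into a statement about the size of the holomorphic symmetry of $(M,\omega)$, and then invoke a rigidity/gap theorem for symmetry groups. Since $\lambda_1\geq 1$ by Theorem \ref{lambda 1 estimate} and the eigenvalues are nondecreasing, the hypothesis $\lambda_{n^2+3}=1$ forces $\lambda_1=\cdots=\lambda_{n^2+3}=1$; in particular the eigenvalue-one eigenspace $E_1=\{u:\Delta u=-u\}$ satisfies $\dim_{\R}E_1\geq n^2+3$. First I would run the Bochner--Kodaira argument underlying Theorem \ref{lambda 1 estimate} in the equality case: for a real $u$ with $\Delta u=-u$, integrating the refined Bochner identity and using $\Ric(\omega)\geq\omega$ forces $\nabla_{\bar i}\nabla_{\bar j}u=0$. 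This says precisely that $X_u:=g^{i\bar j}u_{\bar j}\,\partial_i=\nabla^{1,0}u$ is a holomorphic vector field and that $J\,\mathrm{grad}\,u$ is a Killing field of $\omega$. The assignment $u\mapsto J\,\mathrm{grad}\,u$ is real-linear and injective on $E_1$ (an eigenfunction with vanishing gradient is constant, hence zero), so it produces a compact group $K$ of holomorphic isometries of $(M,\omega)$ with $\dim_{\R}K\geq\dim_{\R}E_1\geq n^2+3$; complexifying inside the automorphism group, $\dim_{\C}\mathrm{Aut}(M)\geq n^2+3$. Note that $\Ric(\omega)\geq\omega$ makes $M$ Fano.

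The heart of the argument is then the following gap statement, which I expect to be the main obstacle: a Fano manifold $M$ of complex dimension $n$ with $\dim_{\C}\mathrm{Aut}(M)\geq n^2+3$ is biholomorphic to $\mathbb{CP}^n$. This is sharp, since $\mathbb{CP}^{n-1}\times\mathbb{CP}^1$ realizes the next value $\dim_{\C}\mathrm{Aut}=(n^2-1)+3=n^2+2$, matching the Kähler--Einstein product in \eqref{counterexample}; this is exactly the ``specific gap'' advertised in the abstract. To prove it I would let $G=\mathrm{Aut}_0(M)$ act on $M$ and analyze the orbit structure: the isotropy of a holomorphic isometric action at a point embeds in $U(n)$ and so has dimension at most $n^2$, and a dimension count against $\dim_{\C}G\geq n^2+3$ should drive the analysis toward an open dense orbit and ultimately the transitivity of a maximal compact subgroup, exhibiting $M$ as a rational homogeneous (flag) manifold $G/P$. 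One then runs through the finite list of $n$-dimensional flag manifolds and checks that only $\mathbb{CP}^n$ has $\dim_{\C}\mathrm{Aut}\geq n^2+3$: the quadric $Q^n$ gives $\tfrac{(n+1)(n+2)}{2}<n^2+3$, while products and the remaining flags give at most $n^2+2$, attained by $\mathbb{CP}^{n-1}\times\mathbb{CP}^1$. The delicate points are ruling out the non-transitive ``almost homogeneous'' case and controlling the jump in symmetry dimension just below $\mathbb{CP}^n$; here I would use the structure theory of almost-homogeneous manifolds (the complement of the open orbit has positive codimension and carries its own symmetry) together with the classification of large compact transformation groups.

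Finally I would upgrade the biholomorphism to an isometry. Having identified $M$ with $\mathbb{CP}^n$, the compact group $K$ is a closed connected subgroup of the maximal compact $PU(n+1)$ of $\mathrm{Aut}(\mathbb{CP}^n)=PGL(n+1,\C)$ with $\dim_{\R}K\geq n^2+3$. Since every proper closed connected subgroup of $PU(n+1)$ has real dimension at most $n^2<n^2+3$, necessarily $K=PU(n+1)$, so $\omega$ is $PU(n+1)$-invariant. As the isotropy $P(U(1)\times U(n))$ acts irreducibly on the tangent space, such an invariant Kähler metric is unique up to scale, whence $\omega=c\,\omega_{\mathbb{CP}^n}$ for some $c>0$. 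Then $\Delta_\omega=c^{-1}\Delta_{\omega_{\mathbb{CP}^n}}$ gives $\lambda_1(\omega)=c^{-1}$, and $\lambda_1=1$ forces $c=1$ (consistently, $\Ric(\omega)=\omega_{\mathbb{CP}^n}=c^{-1}\omega\geq\omega$ already yields $c\leq 1$). Hence $(M,\omega)$ is biholomorphically isometric to $(\mathbb{CP}^n,\omega_{\mathbb{CP}^n})$, as claimed.
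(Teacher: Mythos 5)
Your opening and closing steps are sound and parallel the paper's: the Bochner argument gives $\dim_{\mathbb{R}}\mathrm{Iso}(M,\omega)\geq n^2+3$ (this is Lemma \ref{eigenfunction lemma}), and once one knows $M\cong\mathbb{CP}^n$ carries a metric invariant under all of $PU(n+1)$, isotropy-irreducibility plus $\lambda_1=1$ does pin down $\omega=\omega_{\mathbb{CP}^n}$. The gap is exactly at what you call the heart of the argument. The dimension count you offer --- the isotropy embeds in $U(n)$, hence has dimension at most $n^2$, and comparison with $n^2+3$ ``should drive the analysis toward'' an open orbit and transitivity --- does not yield this: it only shows that orbits have real dimension at least $3$, which for large $n$ is nowhere near $2n$, and it gives no open orbit for $\mathrm{Aut}_0(M)$ either. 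The entire difficulty of the theorem is concentrated in this step, and the paper closes it with Lemma \ref{transitive lemma}: if $\dim_{\mathbb{R}}G\geq n^2+1$ and some orbit $O$ is proper, then each isotropy element preserves the splitting $T_pM=T_pO\oplus(T_pO)^{\perp}$ \emph{and} its interaction with $J$ (the subspaces $V_i\cap JV_i$ and their complements $W_i$, with the action on $W_2$ determined by that on $W_1$ through the isomorphism $\mu$); the resulting case analysis forces $\dim_{\mathbb{R}}G\leq n^2$, a contradiction. Your proposed substitutes --- structure theory of almost-homogeneous manifolds and classification of large compact transformation groups --- are invoked but not proved, and they are of depth comparable to the statement itself, so as written the argument does not close. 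Note also that even after transitivity the paper never classifies flag manifolds: it applies Lemma \ref{transitive lemma} a second time, to the isotropy image in $PU(n)$ acting on $\mathbb{P}(T_p^{1,0}M)\cong\mathbb{CP}^{n-1}$ (whose dimension is at least $(n-1)^2+1$), deducing constant holomorphic sectional curvature (Proposition \ref{dimension prop 2}), and then quotes the Kobayashi--Nomizu space-form theorem; this is both shorter and avoids homogeneous-space classification altogether.

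Two smaller points. First, your complexification claim $\dim_{\mathbb{C}}\mathrm{Aut}(M)\geq n^2+3$ does not follow from real-linear injectivity of $u\mapsto\nabla^{1,0}u$ alone (that only gives $\dim_{\mathbb{C}}\mathrm{Aut}(M)\geq(n^2+3)/2$); you must exclude complex-linear relations, i.e.\ show that no nonzero eigenfunction has a gradient which is itself Killing. This is precisely the step $\nabla^{2}u_{2}=0\Rightarrow u_{2}=-\Delta u_{2}=0$ in Lemma \ref{isomorphism}, so it is fixable, but it is missing from your write-up. Second, your claim that every proper closed connected subgroup of $PU(n+1)$ has real dimension at most $n^2$ fails for $n=3$: the image of $Sp(2)\subset SU(4)$ has dimension $10>9$. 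Your conclusion $K=PU(n+1)$ survives because $10<n^2+3=12$, but this illustrates that the subgroup-dimension bounds your route leans on require genuine care rather than the naive estimate.
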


Theorem \ref{CPn rigidity} substantially improves \cite[Theorem 1.9]{L18}, where the author proved the result under the stronger assumptions that $\Ric(\omega)=\omega$ and $\lambda_{n^2+2n}=1$. We remove his K\"ahler--Einstien condition and reduce the number of required eigenvalues. Moreover,
our result is optimal, since the conclusion fails if we only assume $\lambda_{n^2+2}=1$. Indeed, if we choose $k=n-1$, $l=1$ and $a=0$ in \eqref{counterexample}, i.e.,
\[
(M,\omega) =
(\mathbb{CP}^{n-1},\omega_{\mathbb{CP}^{n-1}})\times(\mathbb{CP}^{1},\omega_{\mathbb{CP}^{1}}),
\]
then
$$
\Ric(\omega)=\omega\text{ and }\lambda_{1}=\ldots=\lambda_{n^{2}+2}=1,
$$
while $M\ncong\mathbb{CP}^n$. Therefore, Theorem \ref{CPn rigidity} gives a sharp characterization of $\mathbb{CP}^n$ in terms of the Ricci curvature and eigenvalues.

Theorem \ref{CPn rigidity} also shows that $(\mathbb{CP}^{n},\omega_{\mathbb{CP}^{n}})$ has the largest multiplicity $n^2~+~2n$ of the first eigenvalue among all K\"ahler manifolds with $\Ric(\omega)\geq\omega$ and $\lambda_1=1$, and there is nothing else with multiplicity in the range $[n^2+3,n^2+2n)$.
In other words, there is a specific gap between the largest and second largest multiplicity. We refer the reader to Section \ref{sec:2&3-large-mult} for further classification on the K\"ahler manifolds attaining the second and third largest multiplicity.

To prove Theorem \ref{CPn rigidity}, we use the viewpoint of twisted K\"ahler--Einstein metrics and carry out a careful infinitesimal analysis of the isometric actions on the manifold. During the course of the proof, we establish a new sharp criterion for the constancy of the holomorphic sectional curvature for general almost Hermitian manifolds (see Proposition \ref{dimension prop 2}), which plays a key role in our work and could be of independent interest.

\subsection{Almost rigidity}

We are also interested in the almost rigidity of eigenvalues. In the Riemannian case, Petersen \cite{P99} and Aubry \cite{A05} showed the following.

\begin{theorem}$(\text{\rm{Petersen} \cite{P99}}, \text{\rm{Aubry} \cite{A05}})\textbf{.}$
    For all $\varepsilon>0$, there exists $\delta=\delta(m,\varepsilon)>0$ such that the following holds. Let $(M,g)$ be a Riemannian manifold of dimension $m$ with $\Ric(g)\geq (m-1)g$ and $\lambda_{m}(M,g)< m+\delta$, then $M$ is diffeomorphic to $\mathbb S^m$ and
    $$
    d_{\mathrm{GH}}\big((M,g),(\mathbb{S}^m,g_{\mathbb{S}^m})\big)<\varepsilon.
    $$
\end{theorem}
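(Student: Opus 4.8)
The plan is to argue by contradiction, combining Gromov--Hausdorff compactness with the rigidity already available in the equality case. Suppose the statement fails for some $\varepsilon>0$; then there is a sequence of closed $m$-manifolds $(M_i,g_i)$ with $\Ric(g_i)\geq(m-1)g_i$ and $\lambda_m(M_i,g_i)\to m$, yet each $M_i$ is either not diffeomorphic to $\mathbb{S}^m$ or satisfies $d_{\mathrm{GH}}\big((M_i,g_i),(\mathbb{S}^m,g_{\mathbb{S}^m})\big)\geq\varepsilon$. Since $\Ric\geq(m-1)>0$, Bonnet--Myers bounds $\diam(M_i,g_i)\leq\pi$ and Bishop--Gromov controls the volumes, so Gromov's precompactness theorem lets me pass to a subsequence converging in the measured Gromov--Hausdorff sense to a compact limit $(X,d,\nu)$, which is a Ricci limit space (an $\mathrm{RCD}(m-1,m)$ space) of diameter $\leq\pi$.

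The heart of the matter is to transport the eigenvalue pinching to the limit and exploit rigidity there. First I would record the quantitative Bochner identity underlying Obata's theorem: for an eigenfunction $\Delta_{\mathbb{R}}u=-\lambda u$, integrating the Bochner formula and using $\Ric\geq(m-1)$ yields
\[
\int_M\big|\nabla^2u+u\,g\big|^2\,dg\;\leq\;(\lambda-1)(\lambda-m)\int_M u^2\,dg,
\]
so that as $\lambda\to m$ the first $m$ eigenfunctions asymptotically solve Obata's equation $\nabla^2u=-u\,g$. Combined with the spectral continuity of eigenvalues under measured Gromov--Hausdorff convergence (Cheeger--Colding), this forces $\lambda_1(X)=\dots=\lambda_m(X)=m$. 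I would then invoke the synthetic analogue of the Lichnerowicz--Obata rigidity (compare Theorem~\ref{Riemannian rigidity}) on $\mathrm{RCD}$ spaces: $\lambda_1(X)=m$ forces $X$ to split as a spherical suspension, and each further eigenfunction at the bottom of the spectrum peels off one more suspension factor. A bottom multiplicity equal to $m$ therefore presents $X$ as the iterated suspension $\mathbb{S}^{m-1}\ast X'$ over a $0$-dimensional factor $X'$ (an $\mathrm{RCD}(-1,0)$ space), which is either a single point—giving a hemisphere-type space with boundary—or a pair of points, giving $X\cong(\mathbb{S}^m,g_{\mathbb{S}^m})$.

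Finally, once the limit is identified with the smooth manifold $\mathbb{S}^m$, the convergence is non-collapsed (the limit has Hausdorff dimension $m$), so the Cheeger--Colding stability/diffeomorphism theorem shows that for all large $i$ the manifold $M_i$ is diffeomorphic to $\mathbb{S}^m$ and $d_{\mathrm{GH}}\big((M_i,g_i),(\mathbb{S}^m,g_{\mathbb{S}^m})\big)\to0$, contradicting the standing assumption. The main obstacle I anticipate is precisely the last rigidity step: making spectral convergence rigorous on a possibly singular limit and, crucially, squeezing the conclusion out of only $m$ rather than $m+1$ pinched eigenvalues. Since the round sphere has first-eigenvalue multiplicity $m+1$, a limiting bottom multiplicity of $m$ is a priori also consistent with the once-fewer-suspended space $\mathbb{S}^{m-1}\ast\{\mathrm{pt}\}$ carrying boundary; excluding this degeneration—which is exactly Aubry's sharpening of Petersen's theorem—requires showing that a Ricci limit of closed manifolds develops no boundary, thereby forcing the extra suspension factor and pinning down $\mathbb{S}^m$.
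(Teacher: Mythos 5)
First, a structural point: the paper does not prove this statement at all --- it is quoted as background from Petersen \cite{P99} and Aubry \cite{A05}, so there is no internal proof to compare against. Your compactness-and-contradiction scheme is also not how those authors argue: Petersen and Aubry work quantitatively, showing by refined integral (Bochner-type) estimates that eigenvalue pinching forces almost-maximal volume (resp.\ radius), and then invoking Colding's volume/radius rigidity together with the Cheeger--Colding stability theorem; no limit-space dichotomy appears, which is also why their method gives an effective $\delta$. Interestingly, your outline is structurally the same as this paper's own proofs of its K\"ahler analogues, Theorems \ref{thm1} and \ref{thm2}: pass to a Gromov--Hausdorff limit, show the pinched eigenfunctions survive as genuine eigenfunctions on the limit (the paper's Section \ref{sec:conv}, via \cite{CC00b}), apply a rigidity theorem on the limit space, and conclude by non-collapsed stability and deformation rigidity. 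Your quantitative Bochner inequality and the identification $\lambda_1(X)=\dots=\lambda_m(X)=m$ on the limit are correct, and the suspension-peeling step (Ketterer's Obata rigidity for $\mathrm{RCD}(m-1,m)$ spaces) does yield exactly the dichotomy you state: the limit is either the round sphere or the closed hemisphere $\mathbb{S}^{m-1}\ast\{\mathrm{pt}\}$.

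However, as written the argument has one genuine unfilled hole, and you flag it yourself: excluding the hemisphere. This is not a side issue --- it is precisely the content of Aubry's improvement from $\lambda_{m+1}$ to $\lambda_m$, so a proof that leaves it as an ``anticipated obstacle'' is incomplete at its decisive point. The gap is fillable from known results, and here is how to close it. Both candidate limits are $m$-dimensional metric measure spaces carrying (a multiple of) $\mathcal{H}^m$, so the convergence cannot be collapsed, since collapsed limits of $m$-manifolds have dimension at most $m-1$ (Cheeger--Colding \cite{CC97}). For non-collapsed limits of closed manifolds with Ricci bounded below, Cheeger--Colding theory \cite{CC97,CC00a} shows there are no codimension-one singular points: the stratification of the singular set satisfies $\mathcal{S}^{m-1}=\mathcal{S}^{m-2}$. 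A boundary point of the hemisphere has tangent cone $\mathbb{R}^{m-1}\times[0,\infty)$, which is exactly a forbidden codimension-one singularity; hence the hemisphere cannot arise, the limit is $(\mathbb{S}^m,g_{\mathbb{S}^m})$, and non-collapsed stability gives the diffeomorphism and the Gromov--Hausdorff estimate, completing your contradiction. With that citation (or the more recent boundary theory for $\mathrm{RCD}$ spaces) inserted, your proposal becomes a correct --- though non-effective, in contrast with \cite{P99,A05} --- proof.
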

Petersen \cite{P99} first proved this result by assuming $\lambda_{m+1}<m+\delta$, and then Aubry \cite{A05} improved it.
But the result fails if one only assumes $\lambda_{m-1}(M,g)< m+\delta$ (see \cite{A05}).
We expect that the K\"ahlerian analogue of the above result holds as well.

\begin{conjecture}
For all $\varepsilon>0$, there exists $\delta=\delta(n,\varepsilon)>0$ such that the following holds. Let $(M,\omega)$ be a compact K\"ahler manifold of complex dimension $n$ with $\Ric(\omega)\geq \omega$ and $\lambda_{n^2+3}< 1+\delta$, then $M$ is biholomorphic to $\mathbb{CP}^n$ and
$$
d_{\mathrm{GH}}\big((M,\omega),(\mathbb{CP}^n,\omega_{\mathbb{CP}^n})\big)<\varepsilon.
$$
\end{conjecture}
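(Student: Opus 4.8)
The plan is to argue by contradiction and to reduce the almost rigidity to the sharp rigidity of Theorem~\ref{CPn rigidity} at the level of limit spaces. Suppose the statement fails: then there exist $\varepsilon_0>0$ and a sequence of compact K\"ahler $n$-folds $(M_j,\omega_j)$ with $\Ric(\omega_j)\geq\omega_j$ and $\lambda_{n^2+3}(M_j,\omega_j)\to 1$, but with $d_{\mathrm{GH}}\big((M_j,\omega_j),(\mathbb{CP}^n,\omega_{\mathbb{CP}^n})\big)\geq\varepsilon_0$ for all $j$. Since $\Ric(\omega_j)\geq\omega_j$ forces the underlying real Ricci curvature to satisfy $\Ric_{\mathbb{R}}\geq g_j$, the Bonnet--Myers theorem gives a uniform diameter bound $\diam(M_j,\omega_j)\leq\pi\sqrt{2n-1}$, while Zhang's volume comparison \cite{Z22} gives $\vol(M_j,\omega_j)\leq\vol(\mathbb{CP}^n,\omega_{\mathbb{CP}^n})$. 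By Gromov's compactness theorem, after passing to a subsequence, $(M_j,\omega_j)$ converges in the measured Gromov--Hausdorff sense to a compact metric measure space $(X,d_\infty,\mathfrak m_\infty)$.

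A first key point is to rule out collapsing. If $\vol(M_j,\omega_j)\to 0$, then $X$ has Hausdorff dimension strictly less than $2n$ and cannot carry the K\"ahler structure extracted below; one must show that the spectral constraint $\lambda_{n^2+3}\to 1$ precludes this degeneration, for instance because too much collapsing pushes the low eigenvalues strictly above $1$, as already happens for the shrinking products $(\mathbb{CP}^{n-1},\omega_{\mathbb{CP}^{n-1}})\times(\mathbb{CP}^1,(1-a)\omega_{\mathbb{CP}^1})$ as $a\to 1$. Granting a uniform volume lower bound, $X$ is a non-collapsed Ricci limit: by Cheeger--Colding theory it decomposes into a regular part that is an open dense smooth K\"ahler manifold satisfying $\Ric\geq\omega$ and a singular set of real codimension at least four (as in the work of Donaldson--Sun), and $(X,d_\infty,\mathfrak m_\infty)$ is an $\mathrm{RCD}(1,2n)$ space.

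Next I would transfer the spectral hypothesis to the limit. Under measured Gromov--Hausdorff convergence with a uniform Ricci lower bound, the Dirichlet energies Mosco-converge and the spectrum is continuous, so $\lambda_k(M_j)\to\lambda_k(X)$ for each $k$; in particular $\lambda_{n^2+3}(X)=1$, while the Lichnerowicz-type bound $\lambda_1(X)\geq1$ persists. It then remains to prove a version of Theorem~\ref{CPn rigidity} valid on the singular limit $X$: that $\Ric\geq\omega$ together with $\lambda_{n^2+3}=1$ forces $X$ to be isometric to $(\mathbb{CP}^n,\omega_{\mathbb{CP}^n})$. Concretely, one carries out the infinitesimal analysis of the proof of Theorem~\ref{CPn rigidity} on the regular part, producing from the eigenfunctions with eigenvalue $1$ a family of holomorphic gradient vector fields and verifying the sharp constancy-of-holomorphic-sectional-curvature criterion of Proposition~\ref{dimension prop 2}, and then extends these objects across the singular set using its codimension-four bound in order to integrate the vector fields to a genuine holomorphic group action. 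Once $X$ is identified with the smooth $(\mathbb{CP}^n,\omega_{\mathbb{CP}^n})$, a Cheeger--Colding/Donaldson--Sun smoothing argument shows that the $(M_j,\omega_j)$, being GH-close to $\mathbb{CP}^n$ with $\Ric\geq\omega$, are eventually biholomorphic to $\mathbb{CP}^n$ with metrics converging to $\omega_{\mathbb{CP}^n}$, so $d_{\mathrm{GH}}\big((M_j,\omega_j),(\mathbb{CP}^n,\omega_{\mathbb{CP}^n})\big)\to0$, contradicting the choice of the sequence.

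The \emph{main obstacle} is precisely this singular rigidity step: Theorem~\ref{CPn rigidity} and Proposition~\ref{dimension prop 2} are proved for smooth metrics, and making the holomorphic-vector-field and curvature analysis work on an $\mathrm{RCD}$/Ricci-limit space --- where the eigenfunctions are only weakly differentiable and could a priori concentrate near the singular set --- is the crux. A second, more self-contained route would avoid limit spaces altogether and argue effectively: one would use the $n^2+3$ approximate eigenfunctions to build an almost-holomorphic, almost-isometric map to $\mathbb{CP}^n$ via a quantitative Bochner inequality, i.e.\ an almost version of Proposition~\ref{dimension prop 2} asserting that an almost-extremal spectrum forces almost-constant holomorphic sectional curvature. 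Quantifying that criterion with effective constants, and controlling the possible collapsing, is the hard analytic core of either approach.
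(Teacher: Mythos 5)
You should first note that the statement you were asked to prove is presented in the paper as a \emph{conjecture}, not a theorem: the authors explicitly write that to attack it ``new ideas are required'' and that it is even unclear to them whether the volume of K\"ahler manifolds satisfying $\Ric(\omega)\geq\omega$ and $\lambda_{n^2+3}\leq 1+\delta$ can collapse. The paper proves the statement only in the special cases $\Ric(\omega)=\omega$ (Theorem \ref{thm1}) and $\omega\in 2\pi c_1(M)$ with $\Ric(\omega)\geq(1-\delta)\omega$ (Theorem \ref{thm2}). Your proposal follows the same broad compactness-plus-rigidity strategy as those proofs, but the two steps you yourself flag as the ``key points'' are precisely the open problems, and you supply no arguments for them; what you have written is a program, not a proof.

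Concretely, there are two genuine gaps. First, non-collapsing: under $\Ric(\omega_j)\geq\omega_j$ alone there is no lower bound on $\int_{M_j}\omega_j^n$, because $[\omega_j]$ is not tied to $c_1(M_j)$; the products $(\mathbb{CP}^{n-1},\omega_{\mathbb{CP}^{n-1}})\times\big(\mathbb{CP}^1,(1-a)\omega_{\mathbb{CP}^1}\big)$ with $a\to1$ show that volume collapse is compatible with $\Ric\geq\omega$ and $\lambda_1=\cdots=\lambda_{n^2-1}=1$. Your assertion that the constraint $\lambda_{n^2+3}\to1$ ``precludes this degeneration'' is exactly the assertion the authors state they cannot prove; the example you cite only shows \emph{consistency} with the conjecture (for those products $\lambda_{n^2+3}$ stays bounded away from $1$), not a mechanism for excluding collapse in general. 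In Theorems \ref{thm1} and \ref{thm2} this issue vanishes because $\omega_i\in2\pi c_1(M_i)$ forces $\int_{M_i}\omega_i^n=(2\pi)^nc_1(M_i)^n\geq(2\pi)^n$. Second, the structure theory you invoke for the limit space --- a regular part which is a smooth K\"ahler manifold, a singular set of real codimension four, identification of $Z$ with a normal projective variety, and extension of the vector fields $J\nabla u$ across $\mathcal{S}$ --- is the Donaldson--Sun/Tian--Wang theory \cite{DS14,T15,TW15}, available only for (almost) K\"ahler--Einstein metrics in the anticanonical class; under the bare hypothesis $\Ric(\omega)\geq\omega$ no partial $C^0$-estimate is known and the limit need not a priori carry any complex structure at all. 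The same objection applies to your final ``smoothing'' step identifying the $M_j$ with $\mathbb{CP}^n$, which in the paper's special cases goes through \cite{DS14} and the deformation rigidity of $\mathbb{CP}^n$ \cite{P22}. Incidentally, for the singular-rigidity step that you call the crux, the paper's own arguments avoid any curvature analysis across $\mathcal{S}$: once $\dim_{\mathbb{R}}\mathrm{Iso}(Z,d)\geq n^2+3$, Lemma \ref{transitive lemma} shows the isometry group acts transitively on $\mathcal{R}$, so $\mathcal{R}$ is compact and hence $\mathcal{R}=Z$, after which the smooth Theorem \ref{CPn rigidity} applies directly; you could adopt that trick, but it repairs neither of the two gaps above.
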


To attack this conjecture, it seems that new ideas are required. It is even unclear to us if the volume of K\"ahler manifolds satisfying $\Ric(\omega)\geq \omega$ and $\lambda_{n^2+3}\leq 1+\delta$ could collapse or not. However, we can prove this conjecture under the K\"ahler--Einstein assumption.

\begin{theorem}\label{thm1}
There exists a dimensional constant $\varepsilon=\varepsilon(n)>0$ such that the following holds. Let $(M,\omega)$ be a compact K\"ahler manifold of complex dimension $n$ with $\Ric(\omega)=\omega$. If $\lambda_{n^2+3}< 1+\varepsilon$, then $(M,\omega)$ is biholomorphically isometric to $(\mathbb{CP}^n,\omega_{\mathbb{CP}^n})$.
\end{theorem}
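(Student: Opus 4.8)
The plan is to argue by contradiction, using a compactness-and-continuity scheme to reduce the almost-rigidity to the exact rigidity of Theorem \ref{CPn rigidity}. Suppose the assertion fails. Then there is a sequence of compact K\"ahler--Einstein manifolds $(M_j,\omega_j)$ of complex dimension $n$ with $\Ric(\omega_j)=\omega_j$ and $\lambda_{n^2+3}(M_j,\omega_j)<1+\tfrac1j$, yet $M_j\not\cong\mathbb{CP}^n$. By Theorem \ref{lambda 1 estimate} we have $\lambda_1(M_j)\geq 1$, so all of $\lambda_1,\dots,\lambda_{n^2+3}$ for $M_j$ lie in $[1,1+\tfrac1j)$. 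In the normalization $\Ric=\omega$ these are Fano K\"ahler--Einstein metrics, so Myers' theorem bounds the diameter, while the volume is pinned between two positive constants: bounded above by Zhang's comparison \cite{Z22}, and bounded below since the anticanonical degree $(-K_{M_j})^n$ of a smooth Fano $n$-fold is a positive integer. Hence the family is non-collapsed with uniformly bounded geometry.

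First I would extract a Gromov--Hausdorff limit. By the compactness theory for non-collapsed K\"ahler--Einstein Fano manifolds (Cheeger--Colding structure theory together with the Donaldson--Sun/Tian partial $C^0$ estimate and algebraization), a subsequence converges in the Gromov--Hausdorff sense to a $\mathbb{Q}$-Fano variety $(M_\infty,\omega_\infty)$ carrying a singular K\"ahler--Einstein metric, smooth away from a closed subset of real codimension at least four. Next I would invoke spectral continuity: for non-collapsed limits with a uniform Ricci lower bound the eigenvalues of the Laplacian pass to the limit, so $\lambda_k(M_j)\to\lambda_k(M_\infty)$ for each fixed $k$. Combined with the two-sided squeeze above this forces
\[
\lambda_1(M_\infty)=\dots=\lambda_{n^2+3}(M_\infty)=1,
\]
so the first eigenvalue of $M_\infty$ has multiplicity at least $n^2+3$.

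The crux is then a rigidity statement on the (possibly singular) limit. On the smooth locus $M_\infty^{\mathrm{reg}}$ the equation $\Ric(\omega_\infty)=\omega_\infty$ holds, and the gradient of each eigenfunction with eigenvalue $1$ is a holomorphic vector field (the Matsushima--Lichnerowicz correspondence). Since the complement of $M_\infty^{\mathrm{reg}}$ has complex codimension at least two and $M_\infty$ is normal, these fields extend to global holomorphic vector fields on $M_\infty$, whence $\dim_{\mathbb C}\mathfrak{h}(M_\infty)\geq n^2+3$. By the classification of Fano varieties with large automorphism group --- the same gap phenomenon underlying Theorem \ref{CPn rigidity} and made precise in Section \ref{sec:2&3-large-mult} --- a $\mathbb{Q}$-Fano variety whose automorphism algebra has dimension at least $n^2+3$ must be $\mathbb{CP}^n$. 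In particular $M_\infty=\mathbb{CP}^n$ is smooth. Finally, a smooth Gromov--Hausdorff limit in this setting is attained biholomorphically: the complex structures converge and, because $\mathbb{CP}^n$ is infinitesimally rigid ($H^1(\mathbb{CP}^n,T_{\mathbb{CP}^n})=0$), $M_j\cong\mathbb{CP}^n$ for all large $j$, contradicting the choice of the sequence.

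The main obstacle I anticipate is the third step: converting the spectral information on $M_\infty$ into genuine holomorphic automorphisms and then deducing $M_\infty=\mathbb{CP}^n$, all in the presence of klt singularities. This requires (i) that the eigenvalue-$1$ eigenspace of the limit Dirichlet form really corresponds to holomorphy potentials on $M_\infty^{\mathrm{reg}}$, with the associated vector fields extending across the singular set, and (ii) a singular analogue of the automorphism-dimension gap. Equivalently, the whole argument can be phrased as establishing a uniform spectral gap: for smooth K\"ahler--Einstein Fano $n$-folds either $\lambda_{n^2+3}=1$ or $\lambda_{n^2+3}\geq 1+\varepsilon(n)$. Once such a gap is known, the hypothesis $\lambda_{n^2+3}<1+\varepsilon(n)$ forces equality, and Theorem \ref{CPn rigidity} applies directly to conclude.
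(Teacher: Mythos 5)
Your overall architecture (contradiction, non-collapsed Gromov--Hausdorff limit, spectral convergence, rigidity of the limit, then deformation rigidity of $\mathbb{CP}^n$ plus Bando--Mabuchi uniqueness to get the contradiction) matches the paper's proof, and the first and last steps are carried out correctly. But the step you yourself flag as the ``crux'' is a genuine gap, and the way you propose to fill it would fail. You reduce to the claim that a $\mathbb{Q}$-Fano variety whose automorphism algebra has dimension at least $n^2+3$ must be $\mathbb{CP}^n$, citing ``the same gap phenomenon underlying Theorem \ref{CPn rigidity}.'' That theorem, Proposition \ref{dimension prop 2}, Lemma \ref{transitive lemma}, and the classification results of Isaev--Kruzhilin invoked in Section \ref{sec:2&3-large-mult} are all statements about \emph{smooth} (almost Hermitian) manifolds: their proofs use the unitary frame bundle, free actions via geodesics, and orbit--stabilizer dimension counts, none of which make sense across klt singularities. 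Moreover, the purely algebro-geometric statement you need is false as stated: the weighted projective plane $\mathbb{P}(1,1,2)$ is a $\mathbb{Q}$-Fano surface with $\dim_{\mathbb{C}}\mathrm{Aut}=7=n^2+3$ for $n=2$, yet it is not $\mathbb{CP}^2$. (Its automorphism group is non-reductive, so it cannot carry a singular K\"ahler--Einstein metric; but exploiting that requires a singular Matsushima-type argument plus a classification of singular K\"ahler--Einstein $\mathbb{Q}$-Fano varieties with large reductive automorphism group, neither of which you supply and neither of which is in the literature the paper relies on.)

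The paper resolves this point by a different and more elementary mechanism, which avoids any singular classification: the limit eigenfunctions produce, via Lemma \ref{Killing}, Killing fields on the regular set $\mathcal{R}$, so $\mathrm{Iso}(Z,d)$ is a compact Lie group (Cheeger--Colding) of dimension at least $n^2+3$ preserving $\mathcal{R}$. Since $\mathcal{R}$ is itself a smooth open K\"ahler manifold, Lemma \ref{transitive lemma} applies to it and shows the action of $\mathrm{Iso}(Z,d)$ on $\mathcal{R}$ is \emph{transitive}; hence $\mathcal{R}$ is a single orbit of a compact group, therefore compact, and being dense in $Z$ it equals $Z$. So the limit is forced to be smooth, and only then is the smooth rigidity Theorem \ref{CPn rigidity} applied. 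In other words, the large symmetry is used first to kill the singular set, not to classify it. If you want to salvage your outline, this is the missing idea: replace the appeal to a (nonexistent) singular automorphism-dimension gap with the transitivity argument on the regular part. Your secondary concern (i) --- that limit eigenfunctions genuinely yield Killing/holomorphic fields --- also needs the cutoff-function integration by parts of Lemma \ref{Killing} (following \cite{TW20}) and the extension result of \cite{T15}; asserting it via ``Matsushima--Lichnerowicz on the smooth locus'' is not enough, because the regular set is incomplete and the Bochner integration by parts must be justified.
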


In other words, if a K\"ahler--Einstein manifold $(M,\omega)$ is not biholomorphic to $\mathbb{CP}^n$, then $\lambda_{n^2+3}\geq 1+\varepsilon(n)$ for some dimensional constant $\varepsilon(n)>0$.
More generally, if we drop the K\"ahler--Einstein condition but keep assuming  $\omega\in 2\pi c_1(M)$, we are able to show the following almost rigidity  result.

\begin{theorem}\label{thm2}
For any $\varepsilon>0$ there exists $\delta=\delta(\varepsilon,n)>0$
such that the following holds. Let $(M,\omega)$ be a compact K\"ahler manifold of complex dimension $n$ with $\omega\in2\pi c_1(M)$. Assume that $$\Ric(\omega)\geq(1-\delta)\omega \text{ and }\lambda_{n^2+3}\leq 1+\delta,$$
then $M$ is biholomorphic to $\mathbb{CP}^n$ and
$$
d_{\mathrm{GH}}\big((M,\omega),(\mathbb{CP}^n,\omega_{\mathbb{CP}^n})\big)<\varepsilon.
$$
\end{theorem}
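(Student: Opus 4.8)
The plan is to argue by contradiction via a compactness-and-rigidity scheme, degenerating everything toward the rigid models furnished by Theorem~\ref{CPn rigidity} and Theorem~\ref{thm1}. Suppose the assertion fails for some $\varepsilon_0>0$. Then there is a sequence $(M_i,\omega_i)$ of compact K\"ahler $n$-folds with $\omega_i\in 2\pi c_1(M_i)$, $\Ric(\omega_i)\geq(1-\delta_i)\omega_i$ and $\lambda_{n^2+3}(M_i,\omega_i)\leq 1+\delta_i$ with $\delta_i\to 0$, yet for which either $M_i\not\cong\mathbb{CP}^n$ or $d_{\mathrm{GH}}\big((M_i,\omega_i),(\mathbb{CP}^n,\omega_{\mathbb{CP}^n})\big)\geq\varepsilon_0$. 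The hypothesis $\omega_i\in 2\pi c_1(M_i)$ is exactly what makes the problem tractable, and is what the paper flags as missing in the general conjecture: it pins the volume to a fixed dimensional multiple of the intersection number $c_1(M_i)^n$, which for Fano $n$-folds lies between two dimensional constants (with maximum $(n+1)^n$ attained only by $\mathbb{CP}^n$), so the sequence cannot collapse; combined with the Bonnet--Myers diameter bound from the Ricci lower bound, Gromov compactness yields a subsequential Gromov--Hausdorff limit $(M_\infty,d_\infty)$.

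Next I would identify the limit. By the structure theory for Gromov--Hausdorff limits of (almost) K\"ahler--Einstein Fano manifolds --- Cheeger--Colding regularity together with the algebraic description of Donaldson--Sun and Tian --- the non-collapsed limit is homeomorphic to a normal $\mathbb{Q}$-Fano variety $V$ with klt singularities, the singular set has real codimension at least $4$, and $d_\infty$ is induced by a weak K\"ahler--Einstein metric $\omega_\infty$ with $\Ric(\omega_\infty)=\omega_\infty$, with smooth convergence on the regular part. The first genuinely new point, specific to Theorem~\ref{thm2} as opposed to Theorem~\ref{thm1}, is that the $\omega_i$ are only \emph{almost} Einstein, so one must show the limit saturates the Einstein condition. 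This I would extract from the Ricci potential: writing $\Ric(\omega_i)=\omega_i+\ddbar h_i$, the average-zero scalar curvature deficit satisfies $R_i-n=\Delta h_i\geq -n\delta_i$ and $\int_{M_i}(R_i-n)\,\omega_i^n=0$, forcing $\|R_i-n\|_{L^1}\to 0$; combined with the lower bound $\ddbar h_i\geq -\delta_i\omega_i$ and the uniform Sobolev/Poincar\'e constants available in the non-collapsed regime, this drives the Ricci potentials to constants and makes $\omega_\infty$ genuinely K\"ahler--Einstein.

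I would then pass the spectral hypothesis to the limit. By spectral convergence on non-collapsed Ricci limit spaces (Cheeger--Colding, in the RCD formulation), the eigenvalues converge, $\lambda_k(M_i,\omega_i)\to\lambda_k(V,\omega_\infty)$ for each fixed $k$, where on the K\"ahler side the identity $\Delta_{\mathbb{R}}=2\Delta$ matches the complex Laplacian spectrum with the metric-measure Laplacian of the limit. A Lichnerowicz--Obata estimate survives on the Einstein limit, because the Bochner argument integrates by parts across a singular set of codimension $\geq 4$, giving $\lambda_1(V,\omega_\infty)\geq 1$ exactly as in Theorem~\ref{lambda 1 estimate}. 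Combining this with $\lambda_{n^2+3}(V,\omega_\infty)=\lim_i\lambda_{n^2+3}(M_i,\omega_i)\leq 1$ forces
\[
1=\lambda_1(V,\omega_\infty)=\cdots=\lambda_{n^2+3}(V,\omega_\infty),
\]
so the K\"ahler--Einstein limit $V$ realises the first eigenvalue $1$ with multiplicity at least $n^2+3$. Via the eigenfunction/holomorphic-vector-field correspondence on klt Fano varieties, this places $\dim_{\mathbb{C}}\mathrm{Aut}(V)$ in the near-maximal range governing the multiplicity gap discussed after Theorem~\ref{CPn rigidity}.

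Finally I would invoke rigidity in the limit: the singular extension of Theorem~\ref{CPn rigidity} to weak K\"ahler--Einstein $\mathbb{Q}$-Fano varieties --- the analytic heart also underlying Theorem~\ref{thm1} --- forces $V$ to be biholomorphically isometric to $(\mathbb{CP}^n,\omega_{\mathbb{CP}^n})$, and in particular smooth. Hence $d_{\mathrm{GH}}\big((M_i,\omega_i),(\mathbb{CP}^n,\omega_{\mathbb{CP}^n})\big)\to 0$, contradicting the GH hypothesis; and since the limit is smooth, the convergence is smooth everywhere, so for large $i$ the manifold $M_i$ is diffeomorphic to $\mathbb{CP}^n$ and, because the limiting complex structures are integrable and $H^1(\mathbb{CP}^n,T^{1,0}\mathbb{CP}^n)=0$, in fact biholomorphic to $\mathbb{CP}^n$, contradicting the other failure mode. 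The two steps I expect to be hardest are the Einstein-ification of the limit, where only a lower Ricci bound is available so the control of $h_i$ must be squeezed out of the borderline $L^1$ estimate on $\Delta h_i$ together with the $\ddbar$-lower bound, and the singular rigidity for the eigenvalue multiplicity on $V$, where the infinitesimal analysis behind Theorem~\ref{CPn rigidity} and Theorem~\ref{thm1} must be pushed through the klt singularities.
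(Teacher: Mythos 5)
Your overall contradiction--compactness scheme matches the paper's, and the non-collapsing argument, the passage of eigenfunctions to the limit, and the final deformation-rigidity step are all in line with what is done there. But the two steps you yourself flag as hardest are genuine gaps, and in both cases the paper has a concrete device that your outline is missing. First, the structure of the limit: with only a lower Ricci bound, Cheeger--Colding theory gives neither smooth convergence on the regular set nor a K\"ahler--Einstein equation there, and your Ricci-potential sketch ($L^1$ smallness of $R_i-n$ plus Sobolev/Poincar\'e constants ``driving $h_i$ to a constant'') does not substitute for this: an $L^1$ bound on $\Delta h_i$ together with $\ddbar h_i\geq-\delta_i\omega_i$ does not upgrade to the $C^\infty_{\mathrm{loc}}$ control on the regular part that the rest of your argument needs. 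The paper instead verifies that the sequence is almost K\"ahler--Einstein in the sense of Tian--Wang; the nontrivial condition there is an integral bound along the normalized K\"ahler--Ricci flow, which is checked via the evolution equation of the scalar curvature and the maximum principle, after which \cite[Theorem 2]{TW15} together with the partial $C^0$-estimate yields Corollary \ref{alke} (regular/singular decomposition, smooth KE metric on $\mathcal{R}$, $Z$ a normal projective variety, $\dim_{\mathcal H}\mathcal{S}\leq 2n-4$).

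Second, and more decisively: you defer the conclusion to a ``singular extension of Theorem \ref{CPn rigidity} to klt Fano varieties,'' which the paper never proves and does not need. Its key observation is that once the $n^2+3$ limit eigenfunctions produce Killing fields extending holomorphically (Lemma \ref{lap}), one has $\dim_{\mathbb{R}}\mathrm{Iso}(Z,d)\geq n^2+3>n^2+1$, so Lemma \ref{transitive lemma} applies to the invariant regular part $\mathcal{R}$: the compact Lie group $\mathrm{Iso}(Z,d)$ acts transitively on $\mathcal{R}$, hence $\mathcal{R}$ is a single orbit of a compact group, hence compact, and being dense in $Z$ it equals $Z$. Thus the singular set is empty, the limit is a \emph{smooth} K\"ahler--Einstein manifold with $\lambda_{n^2+3}=1$, and the already-established smooth Theorem \ref{CPn rigidity} finishes the argument (with biholomorphy of the $M_i$ themselves then coming from the deformation-family argument of \cite{DS14} and the rigidity of $\mathbb{CP}^n$ \cite{P22}). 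This homogeneity-forces-smoothness trick is exactly the idea your proposal lacks; without it, your proof terminates at an unproven, and a priori harder, rigidity statement for singular varieties.
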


The above two results are proved using  deep analysis of the Gromov--Hausdorff limit spaces of (almost) K\"ahler--Einstein manifolds that arises in the solution of the Yau--Tian--Donaldson conjecture; see Section \ref{sec:conv} for details.

\medskip

The paper is organized as follows: In Section 2, we collect some standard results about eigenfunctions. In Section 3, we investigate the isometry group action on almost Hermitian manifolds and then prove Theorem \ref{CPn rigidity}. In Section 4, we establish the convergence of eigenfunctions in the Gromov--Hausdorff topology. In Section 5, we give the proof of Theorem \ref{thm1} and \ref{thm2}. In Section 6, some explicit examples are computed, which indicate that our main results are optimal. In Section \ref{sec:2&3-large-mult}, we use the classification results in \cite{I07,IK09} to further sharpen our main results.

\medskip

{\bf Acknowledgments.} The authors thank Lifan Guan, Wenshuai Jiang and Jun Yu for helpful discussions. J.C. was partially supported by National Key R\&D Program of China 2023YFA1009900, NSFC grant 12271008 and the Fundamental Research Funds for the Central Universities, Peking University. F.W. was partially supported by NSFC grant 12031017 and NSF of Zhejiang Province for Distinguished Young Scholars grant LR23A010001. K.Z. was partially supported by NSFC grants 12101052, 12271040, and 12271038.

\section{Eigenfunctions and automorphisms}

In this part we collect some standard results regarding eigenfunctions on K\"ahler manifolds (cf. \cite{M57}). For the reader's convenience we will reproduce the proofs.
\begin{lemma}\label{eigenfunction lemma}
Let $(M,\omega)$ be a compact K\"ahler manifold of complex dimension $n$ with $\Ric(\omega)\geq\omega$ and $J$ be the complex structure. Set $\theta=\Ric(\omega)-\omega$. If $u$ is a eigenfunction of $1$, then
\begin{enumerate}\setlength{\itemsep}{1mm}
\item[(a)] $\nabla^{1,0}u=g^{i\bar{j}}u_{\bar{j}}\de_{i}$ is a non-zero holomorphic vector field, or equivalently $L_{J\nabla u}J=0$;
\item[(b)] $\iota_{\nabla^{1,0}u}\theta=0$, or equivalently $\iota_{J\nabla u}\theta=0$;\vspace{1mm}
\item[(c)] $\int_{M}\theta^{n}=0$;\vspace{1.5mm}
\item[(d)] $J\nabla u$ is a Killing field.
\end{enumerate}
\end{lemma}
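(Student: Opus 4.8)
The plan is to derive all four statements from a single Bochner--Weitzenb\"ock identity for the eigenfunction $u$, and then read off (a)--(d) in turn. Throughout I use the eigenvalue equation in the form $\Delta u=g^{i\bar j}u_{i\bar j}=-u$ (eigenvalue $1$), the K\"ahler curvature commutation formula, and the fact that $\nabla g=0$ and $\nabla\omega^{n}=0$ so that integrations by parts are clean.

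\emph{Step 1 (the key identity).} Put $A=\int_{M}\|\nabla^{0,1}\nabla^{0,1}u\|^{2}\,\omega^{n}=\int_{M}g^{i\bar l}g^{k\bar j}u_{\bar j\bar l}\,u_{ik}\,\omega^{n}\ge0$, where $u_{ik}=\overline{u_{\bar i\bar k}}$ since $u$ is real. Integrating by parts to move $\nabla_{\bar j}$ off the factor $u_{\bar j\bar l}=\nabla_{\bar j}u_{\bar l}$ gives $A=-\int_{M}g^{i\bar l}u_{\bar l}\,\bigl(g^{k\bar j}\nabla_{\bar j}u_{ik}\bigr)\,\omega^{n}$. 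Inside the contraction I commute $\nabla_{\bar j}$ past $\nabla_{i}$: the ``divergence'' term collapses to $\nabla_{i}\bigl(g^{k\bar j}u_{k\bar j}\bigr)=\nabla_{i}(\Delta u)=-u_{i}$, while the curvature commutator contributes the Ricci term $g^{k\bar j}[\nabla_{\bar j},\nabla_{i}]u_{k}=R_{i\bar m}u^{\bar m}$. Substituting $R_{i\bar m}=g_{i\bar m}+\theta_{i\bar m}$, the two $|\partial u|^{2}$ contributions cancel and one is left with
\[
\int_{M}\|\nabla^{0,1}\nabla^{0,1}u\|^{2}\,\omega^{n}=-\int_{M}\theta(\nabla^{1,0}u,\overline{\nabla^{1,0}u})\,\omega^{n}.
\]
For a general eigenvalue $\lambda$ the right-hand side acquires an extra term $(\lambda-1)\int_{M}|\partial u|^{2}$, which is exactly what forces $\lambda\ge1$ in Theorem \ref{lambda 1 estimate}; here $\lambda=1$ removes it. The main obstacle is precisely this computation: getting the integration-by-parts bookkeeping and the sign in the curvature commutation right so that the $|\partial u|^{2}$ terms cancel and only $-\theta(\cdots)$ survives.

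\emph{Step 2 (parts (a) and (b)).} The left-hand side of the identity is $\ge0$, while the right-hand side is $\le0$ because $\theta=\Ric(\omega)-\omega\ge0$; hence both vanish identically. The vanishing of $\nabla^{0,1}\nabla^{0,1}u$ reads $\nabla_{\bar k}(g^{i\bar j}u_{\bar j})=0$, i.e. $\nabla^{1,0}u$ is holomorphic, and it is nonzero since $u$ is a nonconstant function (the eigenvalue is $1\ne0$). The standard correspondence between real holomorphic fields and their $(1,0)$-parts gives the equivalent statement $L_{J\nabla u}J=0$, proving (a). The pointwise vanishing of the integrand $\theta(\nabla^{1,0}u,\overline{\nabla^{1,0}u})$ together with $\theta\ge0$ forces $\iota_{\nabla^{1,0}u}\theta=0$, proving (b).

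\emph{Step 3 (parts (c) and (d)).} For (c): by (a) the holomorphic field $V=\nabla^{1,0}u$ vanishes only on a proper analytic subvariety, hence $V\ne0$ on a dense open set; there (b) gives $\theta_{i\bar j}V^{i}=0$, so the nonnegative Hermitian form $(\theta_{i\bar j})$ is degenerate, $\det(\theta_{i\bar j})=0$, and therefore $\theta^{n}=0$. By continuity $\theta^{n}\equiv0$ on $M$, so $\int_{M}\theta^{n}=0$. For (d): the field $X=J\nabla u$ satisfies $L_{X}J=0$ by (a), so it suffices to check $L_{X}\omega=0$. Using $\omega(Y,Z)=g(JY,Z)$ one computes $\iota_{J\nabla u}\omega=-du$, whence $L_{J\nabla u}\omega=d\,\iota_{J\nabla u}\omega=-d(du)=0$ since $\omega$ is closed. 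Combined with $L_{X}J=0$ this yields $L_{X}g=0$, so $X=J\nabla u$ is Killing, proving (d).
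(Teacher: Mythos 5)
Your proof is correct and follows essentially the same route as the paper: both derive the Bochner-type identity $\int_{M}u_{ki}u^{ki}\,\omega^{n}+\int_{M}\theta_{i\bar j}u^{i}u^{\bar j}\,\omega^{n}=0$ via integration by parts and the K\"ahler curvature commutation (you start from $\int\|\nabla^{0,1}\nabla^{0,1}u\|^{2}$ while the paper starts from $\int|\partial u|^{2}$, but the computation is the same read in opposite directions), and then conclude (a)--(b) from $\theta\geq0$, (c) from the analytic-subvariety argument, and (d) from Cartan's formula exactly as the paper does.
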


\begin{proof}
We compute
\[
\begin{split}
\int_{M}u_{i}u^{i}\omega^{n}
= {} & -\int_{M}(\Delta u)_{i}u^{i}\omega^{n} \\
= {} & -\int_{M}(g^{k\bar{l}}u_{k\bar{l}})_{i}u^{i}\omega^{n} \\
= {} & -\int_{M}g^{k\bar{l}}(u_{ki\bar{l}}-R_{k\bar{j}i\bar{l}}u^{\bar{j}})u^{i}\omega^{n} \\
= {} & -\int_{M}g^{k\bar{l}}u_{ki\bar{l}}u^{i}\omega^{n}+\int_{M}R_{i\bar{j}}u^{i}u^{\bar{j}}\omega^{n} \\
= {} & \int_{M}u_{ki}u^{ki}\omega^{n}+\int_{M}u_{i}u^{i}\omega^{n}+\int_{M}\theta_{i\bar{j}}u^{i}u^{\bar{j}}\omega^{n}.
\end{split}
\]
Combining this with $\theta\geq0$, we have
\begin{equation*}
\int_{M}u_{ki}u^{ki}\omega^{n} = 0, \ \ \
\int_{M}\theta_{i\bar{j}}u^{i}u^{\bar{j}}\omega^{n}=0
\end{equation*}
and so
\begin{equation*}
u_{ij}=0, \ \ \ u_{\bar i \bar j}=0, \ \ \ \theta_{i\bar{j}}u^{i}u^{\bar{j}} = 0.
\end{equation*}
Then we obtain (a). Using $\theta\geq0$ again, $\theta_{i\bar{j}}u^{i}u^{\bar{j}}=0$ implies (b).

For (c), If $\nabla^{1,0}u(p)\neq0$ at $p\in M$, then (b) shows $\theta^{n}(p)=0$. By (a),
we know the set $\{\nabla^{1,0}u=0\}$ is a subvariety and hence of measure zero. It follows that
\[
\int_{M}\theta^{n} = \int_{\{\nabla^{1,0}u\neq0\}}\theta^{n} = 0,
\]
which is (c).

Using (a), we have $L_{J\nabla u}J=0$. To show (d), it then suffices to show $L_{J\nabla u}\omega=0$.
By $\omega(\cdot,\cdot)=g(J\cdot,\cdot)$ we have that
\[
\iota_{J\nabla u}\omega=-g(\nabla u,\cdot)=-du.
\]
Then by Cartan's formula,
\[
L_{J\nabla u}\omega = d\,\iota_{J\nabla u}\omega+\iota_{J\nabla u}d\omega = d\,\iota_{J\nabla u}\omega = -d^{2}u = 0,
\]
which completes the proof.
\end{proof}

As an immediate consequence of Lemma \ref{eigenfunction lemma}, we obtain the following rigidity result of K\"ahler--Einstien metrics, generalizing slightly \cite[Theorem 3.1]{TY12}.

\begin{proposition}\label{KE rigidity big}
Let $(M,\omega)$ be a compact K\"ahler manifold of complex dimension $n$ with $\Ric(\omega)\geq \omega$.
If $\lambda_{1}=1$ and $[\Ric(\omega)-\omega]=c[\alpha]$ for some constant $c$ and big class $[\alpha]\in H^{1,1}(X,\mathbb{R})$, then $\Ric(\omega)= \omega$.
\end{proposition}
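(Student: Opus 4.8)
The plan is to translate the hypothesis into a purely cohomological statement about the class of the semipositive form $\theta := \Ric(\omega)-\omega\geq 0$, and then play its near-triviality (forced by the eigenvalue condition) against the bigness of $[\alpha]$. First I would invoke Lemma \ref{eigenfunction lemma}: since $\lambda_1=1$ there exists an eigenfunction $u$ of $1$, so part (c) gives $\int_M\theta^n=0$. Because $\theta$ is a smooth \emph{semipositive} real $(1,1)$-form, its class $[\theta]$ lies in the closure of the K\"ahler cone, i.e. $[\theta]$ is nef, and in particular pseudoeffective. The whole problem then reduces to upgrading the vanishing of the top self-intersection $\int_M\theta^n=0$ to the vanishing of the class itself, $[\theta]=0$.

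The key input is the fact, due to Boucksom (see also Boucksom--Demailly--P\u{a}un--Peternell), that for a nef class $\beta$ on a compact K\"ahler manifold one has $\vol(\beta)=\int_M\beta^n$, so that a nef class is big exactly when its top self-intersection is strictly positive. Applying this to $\beta=[\theta]$, the identity $\int_M\theta^n=0$ shows that $[\theta]$ is \emph{not} big. Now I would use $[\theta]=c[\alpha]$ to force $c=0$. If $c>0$, then $[\theta]$ is a positive multiple of the big class $[\alpha]$, hence itself big, contradicting the previous sentence. If $c<0$, then $-[\theta]=|c|[\alpha]$ is big, hence $-[\theta]$ is pseudoeffective; combined with the fact that $[\theta]$ is pseudoeffective and that the pseudoeffective cone contains no line (it is salient), this forces $[\theta]=0$, whence $[\alpha]=0$, contradicting the bigness of $[\alpha]$. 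Therefore $c=0$ and $[\theta]=0$.

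It remains to deduce $\theta=0$ from $[\theta]=0$. Since $\theta$ is a $d$-exact real $(1,1)$-form on the compact K\"ahler manifold $M$, the $\partial\overline{\partial}$-lemma produces a smooth real function $f$ with $\theta=\sqrt{-1}\,\partial\overline{\partial}f$. The semipositivity $\theta\geq 0$ says precisely that $f$ is plurisubharmonic, and a plurisubharmonic function on a compact manifold is constant by the maximum principle; hence $\theta=0$, that is, $\Ric(\omega)=\omega$.

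I expect the only genuine subtlety to be the invocation of the identity $\vol(\beta)=\int_M\beta^n$ for nef classes, together with the accompanying sign bookkeeping through the pseudoeffective cone; once this positivity dichotomy is in place, the remaining steps are formal. A reader wishing to bypass the volume machinery could instead argue directly from the definition of bigness: a big class $[\alpha]$ carries a K\"ahler current, so $\pm[\theta]$ does as well, and comparing such a current with the smooth semipositive representative $\theta$ of vanishing Monge--Amp\`ere mass $\int_M\theta^n=0$ again yields a contradiction; but the nef-volume formulation seems the cleanest.
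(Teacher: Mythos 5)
Your proof is correct, and its skeleton is the same as the paper's: use Lemma \ref{eigenfunction lemma} (c) to get $\int_M\theta^n=0$, deduce $[\theta]=0$ cohomologically, then conclude $\theta=0$ from semipositivity of an exact form (the paper cites the strong maximum principle; your $\partial\overline{\partial}$-lemma plus constancy of plurisubharmonic functions is the same step spelled out). The genuine difference is the middle step, and here your version is actually more careful than the paper's. The paper writes $c^n\int_M\alpha^n=\int_M\theta^n=0$ and immediately concludes $c=0$, which tacitly assumes a big class has nonzero top self-intersection; this is false for general (non-nef) big classes --- on the blow-up of $\mathbb{CP}^2$ at a point, $[\pi^*H+E]$ is big with self-intersection zero --- so the paper's one-line deduction is really relying on the unstated fact that $c[\alpha]=[\theta]$ is nef. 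Your argument makes exactly this precise: nef-ness of $[\theta]$ (from $\theta\geq0$) together with Boucksom's identity $\vol(\beta)=\int_M\beta^n$ for nef classes shows $[\theta]$ is not big, ruling out $c>0$, while the salience of the pseudoeffective cone rules out $c<0$. The price is invoking heavier machinery (volumes of transcendental classes, structure of the pseudoeffective cone), but the payoff is that your write-up closes a small gap that the paper's terse deduction leaves open.
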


\begin{proof}
Using Lemma \ref{eigenfunction lemma} (c), we obtain
\[
c^{n}\int_{M}\alpha^{n} = \int_{M}\theta^{n} = 0,
\]
which implies $c=0$. Combining $\theta\geq0$ and $[\theta]=0$, we obtain $\theta=0$ by the strong maximum principle, and hence $\Ric(\omega)=\omega$.
\end{proof}

\begin{lemma}\label{isomorphism}
Let $(M,\omega)$ be a compact K\"ahler manifold of complex dimension $n$ with $\Ric(\omega)\geq\omega$, $\mathrm{Iso}(M,\omega)$ be the group of biholomorphic isometries of $(M,\omega)$ and $\mathfrak{iso}(M,\omega)$ be the Lie algebra of $\mathrm{Iso}(M,\omega)$. Set
\[
\Lambda_{1}(M,\omega) = \{u\in C^{\infty}(M,\mathbb{R})~|~\text{$u$ is an eigenfunction of $1$ on $M$} \},
\]
\[
\mathfrak{h}(M,\omega) = \{W\in\mathfrak{iso}(M,\omega)~|~\iota_{W}\theta=0 \}.
\]
Then $\Lambda_{1}(M,\omega)$ is isomorphic to $\mathfrak{iso}(M,\omega)$ as linear spaces through the correspondence given by $F(u)=J\nabla u$. Moreover, there is a subgroup of $\mathrm{Iso}(M,\omega)$, denoted by $H(M,\omega)$, such that $\mathfrak{h}(M,\omega)$ is the Lie algebra of $H(M,\omega)$.
\end{lemma}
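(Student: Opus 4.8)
The plan is to prove Lemma~\ref{isomorphism} in two parts, corresponding to the two assertions in the statement. For the first part, I would show that $F(u) = J\nabla u$ gives a well-defined linear isomorphism between $\Lambda_1(M,\omega)$ and $\mathfrak{iso}(M,\omega)$. That $F$ lands in $\mathfrak{iso}(M,\omega)$ follows immediately from Lemma~\ref{eigenfunction lemma}: part~(a) says $J\nabla u$ is real-holomorphic (infinitesimally preserves $J$) and part~(d) says it is a Killing field, so $J\nabla u$ generates a one-parameter group of biholomorphic isometries, i.e.\ lies in $\mathfrak{iso}(M,\omega)$. Linearity of $F$ is clear from the linearity of $u \mapsto \nabla u$. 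Injectivity is also easy: if $J\nabla u = 0$ then $\nabla u = 0$ (as $J$ is an isomorphism on each tangent space), so $u$ is constant on the connected manifold $M$, but an eigenfunction of the positive eigenvalue $1$ has zero average and hence must vanish.

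\emph{Surjectivity} is where the real content lies, and I expect it to be the main obstacle. Given $W \in \mathfrak{iso}(M,\omega)$, I must produce an eigenfunction $u$ with $J\nabla u = W$. The standard approach (going back to Matsushima~\cite{M57}) is a Hodge-theoretic / potential argument: since $W$ is a Killing field that also preserves $J$, the contraction $\iota_W \omega$ is a closed real one-form, and one wants to write $\iota_W\omega = -du$ for a real function $u$. The obstruction to exactness is the de Rham class $[\iota_W\omega] \in H^1(M,\mathbb{R})$; here I would use that $H^1(M,\mathbb{R}) = 0$ because $\Ric(\omega) \geq \omega > 0$ forces $M$ to be Fano and hence simply connected (or invoke Bochner vanishing directly). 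Thus $u$ exists and is unique up to an additive constant, which I fix by normalizing $\int_M u\,\omega^n = 0$. It remains to verify that this $u$ is genuinely an eigenfunction of eigenvalue $1$, i.e.\ $\Delta u = -u$. This is the delicate computation: one uses that $W = J\nabla u$ being holomorphic means $(\nabla^{1,0}u)$ is a holomorphic vector field, which translates into $u_{i\bar\jmath}$ being related to the Ricci tensor; combined with $\Ric(\omega)\geq\omega$ and the commutation formulae from the proof of Lemma~\ref{eigenfunction lemma}, this pins down $\Delta u = -u$. The key identity is that a holomorphic potential for a Killing field on a manifold with $\Ric\geq\omega$ must satisfy exactly the eigenvalue equation, with the positivity of $\theta$ preventing any other eigenvalue from occurring.

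For the second part, I would construct the subgroup $H(M,\omega)$ whose Lie algebra is $\mathfrak{h}(M,\omega) = \{W \in \mathfrak{iso}(M,\omega) : \iota_W\theta = 0\}$. First I would check that $\mathfrak{h}(M,\omega)$ is indeed a Lie subalgebra of $\mathfrak{iso}(M,\omega)$: it is clearly a linear subspace, and for the bracket one computes $\iota_{[W_1,W_2]}\theta = L_{W_1}(\iota_{W_2}\theta) - \iota_{W_2}(L_{W_1}\theta)$ via Cartan's magic formula; since each $W_i \in \mathfrak{iso}(M,\omega)$ preserves both $\omega$ and $\Ric(\omega)$ (isometries preserve the Ricci form), we have $L_{W_i}\theta = 0$, so both terms vanish when $\iota_{W_1}\theta = \iota_{W_2}\theta = 0$, giving $[W_1,W_2] \in \mathfrak{h}(M,\omega)$. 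Then, by the standard correspondence between Lie subalgebras and connected Lie subgroups (the integral-subgroup theorem applied to the compact Lie group $\mathrm{Iso}(M,\omega)$), there is a unique connected subgroup $H(M,\omega) \leq \mathrm{Iso}(M,\omega)$ with Lie algebra $\mathfrak{h}(M,\omega)$. I would remark that, via the isomorphism $F$, the subspace $\mathfrak{h}(M,\omega)$ corresponds precisely to those eigenfunctions $u$ with $\iota_{J\nabla u}\theta = 0$, which by Lemma~\ref{eigenfunction lemma}(b) is automatic; this will be relevant for later sections, though the present lemma only asserts existence of $H(M,\omega)$.
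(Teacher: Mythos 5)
Your surjectivity step contains a genuine gap, and it is not repairable: under the hypothesis $\Ric(\omega)\geq\omega$ the map $F(u)=J\nabla u$ is simply not onto $\mathfrak{iso}(M,\omega)$. (The statement as printed has a misprint here; the paper's own proof, and every later application of the lemma, establishes that $F$ is a linear isomorphism onto the subspace $\mathfrak{h}(M,\omega)$ singled out in the statement.) A counterexample is supplied by the paper itself: for $(M,\omega)=(\mathbb{CP}^{k},\omega_{\mathbb{CP}^{k}})\times\big(\mathbb{CP}^{l},(1-a)\omega_{\mathbb{CP}^{l}}\big)$ with $a\in(0,1)$ one has $\Ric(\omega)\geq\omega$, and a Killing field $W$ tangent to the second factor lies in $\mathfrak{iso}(M,\omega)$; its potential $u$ (which exists, exactly as in your Hodge-theoretic step) is an eigenfunction of eigenvalue $1/(1-a)>1$, not of $1$. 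Indeed, by Lemma \ref{example lemma a non-zero}, $\dim_{\mathbb{R}}\Lambda_{1}(M,\omega)=k^{2}+2k$, which is strictly smaller than $\dim_{\mathbb{R}}\mathfrak{iso}(M,\omega)\geq k^{2}+2k+l^{2}+2l$, so no argument can make $F$ surjective onto $\mathfrak{iso}(M,\omega)$. The precise point where your proposal breaks is the claimed ``key identity'': it is false that a Killing potential on a manifold with $\Ric(\omega)\geq\omega$ must satisfy $\Delta u=-u$. The correct computation (the one in the paper's proof) gives $(\Delta u+u)_{\bar k}=-\theta_{i\bar k}V^{i}$ with $V=\nabla^{1,0}u$ and $\theta=\Ric(\omega)-\omega$; this right-hand side vanishes precisely when $\iota_{W}\theta=0$, i.e.\ when $W\in\mathfrak{h}(M,\omega)$, and the positivity of $\theta$ does not force it to vanish, since $\theta_{i\bar k}V^{i}$ is not a sign-definite quantity that can be integrated away. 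Note also an internal inconsistency you could have caught: your closing remark that every $F(u)$ lies in $\mathfrak{h}(M,\omega)$ (Lemma \ref{eigenfunction lemma}(b)), combined with your claimed surjectivity onto $\mathfrak{iso}(M,\omega)$, would force $\mathfrak{h}(M,\omega)=\mathfrak{iso}(M,\omega)$, which the above example refutes.

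The remaining parts of your proposal are sound. Well-definedness, linearity and injectivity of $F$ agree with the paper, and once the target is corrected to $\mathfrak{h}(M,\omega)$ your surjectivity skeleton (produce a potential via $H^{1}(M,\mathbb{R})=0$, then show the potential is an eigenfunction, now using $\iota_{W}\theta=0$ at the key step) parallels the paper's proof; working with a real potential would even spare you the paper's extra step of killing the imaginary part $u_{2}$ via the gradient-Killing argument $\nabla^{2}u_{2}=0$. For the existence of $H(M,\omega)$, your argument is correct and in fact cleaner than the paper's: you get $\iota_{[W_{1},W_{2}]}\theta=L_{W_{1}}\iota_{W_{2}}\theta-\iota_{W_{2}}L_{W_{1}}\theta=0$ directly from $L_{W_{i}}\theta=0$ (flows of biholomorphic isometries preserve both $\omega$ and $\Ric(\omega)$, hence $\theta$), then invoke the integral subgroup theorem; the paper instead passes to the holomorphic fields $V_{i}=JW_{i}+\sqrt{-1}W_{i}$, deduces $L_{[V_{1},V_{2}]}\theta=0$ from Cartan's formula, and needs Hodge theory together with $\sqrt{-1}\partial\bar{\partial}f=0$ to conclude $\iota_{[V_{1},V_{2}]}\theta=0$.
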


\begin{proof}
By Lemma \ref{eigenfunction lemma} (a), (b) and (d), $F$ maps $\Lambda_{1}(M,\omega)$ into $\mathfrak{h}(M,\omega)$ injectively. We show this map is also onto. Given any $W\in\mathfrak{h}(M,\omega)$, we know that $V=JW+\sqrt{-1}W$ is a holomorphic vector field on $M$. We claim that there exist $u_{1},u_{2}\in\Lambda_{1}(M,\omega)$ such that
\begin{equation}\label{isomorphism claim}
V = \nabla^{1,0}(u_{1}+\sqrt{-1}u_{2}).
\end{equation}
Given this claim, we obtain
\[
JW+\sqrt{-1}W = (\nabla u_{1}+J\nabla u_{2})+\sqrt{-1}(-J\nabla u_{1}+\nabla u_{2}).
\]
Considering the imaginary part of both sides and using Lemma \ref{eigenfunction lemma} (d), we obtain $\nabla u_{2}=W+J\nabla u_{1}$ is a Killing field and so $\nabla^{2}u_{2}=L_{\nabla u_{2}}g=0$. Combining this with $u_{2}\in\Lambda_{1}(M,\omega)$, we obtain $u_{2}=-\Delta u_{2}=0$. It then follows that $W=J\nabla(-u_{1})$, which shows that the map $F$ is onto.

Now we prove the claim \eqref{isomorphism claim}. Using Hodge theory and the fact that $H^{1}(X,\mathbb{C})=0$ (as $M$ is Fano), one can find $u\in C^\infty(M,\mathbb{C})$ such that $\nabla^{1,0}u=V$. Since $V$ is holomorphic, then $u_{\bar j \bar k}=0$. Next we argue that $\Delta u+u$ is actually a holomorphic function on $M$, hence $\Delta u+u=c$ for some constant $c\in\mathbb{C}$, thus both $\mathrm{Re}(u-c)$ and $\mathrm{Im}(u-c)$ belong to $\Lambda_{1}(M,\omega)$, which will finish the proof of the claim \eqref{isomorphism claim}. So we compute
    \begin{equation*}
        \begin{aligned}
             (\Delta u+u)_{\bar k}&=g^{i\bar j}u_{i\bar j \bar k}+u_{\bar k}\\
             &=g^{i\bar j}u_{\bar j i \bar k}+u_{\bar k}\\
             &=g^{i\bar j}u_{\bar j \bar k i}-g^{i\bar j}R_{i\bar k}u_{\bar j}+u_{\bar k}\\
             &=0-g^{i\bar j}g_{i\bar k}u_{\bar j}-g^{i\bar j}\theta_{i\bar k}u_{\bar j}+u_{\bar k}\\
             &=-\theta_{i\bar k}V^i=0.
        \end{aligned}
    \end{equation*}
Therefore, $\Delta u+u$ is holomorphic, as claimed.

Finally we show the existence of $H(M,\omega)$. It suffices to show that $\mathfrak{h}(M,\omega)$ is closed under Lie bracket. For any $W_{1},W_{2}\in\mathfrak{h}(M,\omega)$, we know that $V_{i}=JW_{i}+\sqrt{-1}W_{i}$ $(i=1,2)$ are holomorphic vector fields on $M$. Therefore, $[V_{1},\bar{V}_{2}]=[\bar{V}_{1},V_{2}]=0$ and so
\[
\begin{split}
[W_{1},W_{2}] = {} & \left[\frac{V_{1}-\bar{V}_{1}}{2\sqrt{-1}},\frac{V_{2}-\bar{V}_{2}}{2\sqrt{-1}}\right] \\[1mm]
= {} & -\frac{1}{4}\left([V_{1},V_{2}]+\overline{[V_{1},V_{2}]}-[V_{1},\bar{V}_{2}]-[\bar{V}_{1},V_{2}]\right) \\[1mm]
= {} & -\frac{1}{4}\left([V_{1},V_{2}]+\overline{[V_{1},V_{2}]}\right).
\end{split}
\]
To show $[W_{1},W_{2}]\in\mathfrak{h}(M,\omega)$, it then suffices to show $\iota_{[V_{1},V_{2}]}\theta=0$. Using $\iota_{W_{i}}\theta=0$ and that $\theta$ is compatible with $J$, we see that $\iota_{V_{i}}\theta=0$. Then Cartan's formula shows
\[
L_{V_{i}}\theta= d\,\iota_{V_{i}}\theta+\iota_{V_{i}}d\theta = 0,
\]
which implies
\[
L_{[V_{1},V_{2}]}\theta = L_{V_{1}}\circ L_{V_{2}}\theta-L_{V_{2}}\circ L_{V_{1}}\theta = 0.
\]
On the other hand, $[V_{1},V_{2}]$ is also a holomorphic vector fields on $M$. Using Hodge theory, one has $\iota_{[V_{1},V_{2}]}\theta=\sqrt{-1}\bar\partial f$ for some $f\in C^\infty(X,\mathbb C)$. Using Cartan's formula again,
\[
L_{[V_{1},V_{2}]}\theta= d\,\iota_{[V_{1},V_{2}]}\theta+\iota_{[V_{1},V_{2}]}d\theta = \sqrt{-1}\partial\bar{\partial}f.
\]
Then we obtain $\sqrt{-1}\partial\bar{\partial}f=0$ and so $f$ is constant. This implies $\iota_{[V_{1},V_{2}]}\theta=0$, as desired.
\end{proof}

\section{Isometry group and Rigidity}
\subsection{Almost Hermitian manifolds with large isometry group} To prove Theorem \ref{CPn rigidity}, we shall use the general philosophy that the geometry of a manifold becomes rather rigid if the dimension of the isometry group is large enough; see also \cite{Y53,K67,T69,I07,IK09} for related discussions. Although we are mainly concerned with K\"ahler manifolds, in this part we will prove some general results for almost Hermitian manifolds, which could be of independent interest.

Let $(M,\omega)$ be an almost Hermitian manifold, namely, it is a Riemannian manifold $(M,g)$ equipped with an almost complex structure $J$ that is compatible with $g$, and $\omega(\cdot,\cdot):=g(J\cdot,\cdot)$ is the corresponding K\"ahler form. $(M,\omega)$ is a K\"ahler manifold precisely when $J$ is integrable and $d\omega=0$. We say that an almost Hermitian manifold $(M,\omega)$ has constant holomorphic sectional curvature if there is a constant $c$ such that for any $p\in M$ and any unit vector $X\in T_p^{1,0}M$ one has
$$
R(X,\bar X,X,\bar X)=c,
$$
where $R$ denotes the Riemannian curvature tensor associated with $g$.

In what follows we always assume that $M$ is connected and we denote by $\mathrm{Iso}(M,\omega)$ the group of differomorphisms of $M$ that preserve both $g$ and $J$. Note that $\mathrm{Iso}(M,\omega)$ is a Lie group, and will be called the biholomorphic isometry group of the almost Hermitian manifold.

We first recall a rigidity result in \cite{T69}. We reproduce its proof, as it is instructive for our discussions below.

\begin{proposition}\label{dimension prop 1}
Let $(M,\omega)$ be an almost Hermitian manifold of real dimension $2n$.
Then one has
\[
\dim_{\mathbb{R}}\mathrm{Iso}(M,\omega) \leq n^{2}+2n.
\]
If $\dim_{\mathbb{R}}\mathrm{Iso}(M,\omega)=n^{2}+2n$, then $(M,\omega)$ has constant holomorphic sectional curvature.
\end{proposition}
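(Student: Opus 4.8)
The plan is to bound $\dim_{\mathbb{R}}\mathrm{Iso}(M,\omega)$ through the orbit--stabilizer theorem, reducing the estimate to a bound on the isotropy subgroup, and then to extract constant holomorphic sectional curvature in the equality case from the fact that the isotropy must act as the full unitary group.

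First I would fix a point $p\in M$ and write $G=\mathrm{Iso}(M,\omega)$, with $G_p$ the isotropy subgroup at $p$. Since $G$ is a Lie group acting smoothly on $M$, the orbit--stabilizer theorem gives
\[
\dim_{\mathbb{R}}G = \dim_{\mathbb{R}}(G\cdot p) + \dim_{\mathbb{R}}G_p \leq 2n + \dim_{\mathbb{R}}G_p,
\]
because $G\cdot p\subseteq M$ has real dimension at most $2n$. It then suffices to bound the isotropy. I would consider the isotropy representation $\rho\colon G_p\to GL(T_pM)$, $\phi\mapsto d\phi_p$. The key classical input is that $\rho$ is \emph{faithful}: an isometry fixing $p$ with $d\phi_p=\mathrm{id}$ intertwines $\exp_p$ with itself, hence fixes a neighborhood of $p$, so $\phi=\mathrm{id}$ by connectedness of $M$. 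Since every $\phi\in G_p$ preserves both $g_p$ and $J_p$, its differential lies in the unitary group $U(n)\subset GL(T_pM)$ determined by $(g_p,J_p)$. Thus $\rho$ embeds $G_p$ into $U(n)$ and $\dim_{\mathbb{R}}G_p\leq\dim_{\mathbb{R}}U(n)=n^2$, which gives $\dim_{\mathbb{R}}G\leq n^2+2n$.

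For the rigidity statement, suppose $\dim_{\mathbb{R}}G=n^2+2n$. Applying the orbit--stabilizer identity at an arbitrary point forces equality in both estimates above, so at \emph{every} $p$ one has $\dim_{\mathbb{R}}(G\cdot p)=2n$ and $\dim_{\mathbb{R}}G_p=n^2$. The first says every orbit is open; as $M$ is connected and the orbits partition $M$, there is a single orbit and $G$ acts transitively, i.e.\ $M$ is homogeneous. The second, together with faithfulness, shows that $\rho(G_p)$ is an $n^2$-dimensional subgroup of $U(n)$; since $U(n)$ is connected, its identity component, being a connected Lie subgroup of the same dimension, must coincide with $U(n)$. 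Hence the isotropy acts on $T_pM\cong\mathbb{C}^n$ as the full unitary group.

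Finally I would read off the curvature. Isometries preserve the Riemann tensor, so $R_p$ is invariant under the isotropy action, hence $U(n)$-invariant by the previous step. Because $U(n)$ acts transitively on the unit sphere of $T^{1,0}_pM\cong\mathbb{C}^n$, the holomorphic sectional curvature $X\mapsto R(X,\bar X,X,\bar X)$ is constant on unit vectors at $p$; transitivity of $G$ together with its preservation of $g$ and $J$ then propagates this single value across $M$, giving a global constant. I expect the main conceptual point to be precisely this transitivity-on-the-sphere observation: it is what converts the algebraically intricate condition of $U(n)$-invariance of the curvature tensor into the clean conclusion of constant holomorphic sectional curvature, bypassing any need to decompose the space of algebraic curvature tensors under $U(n)$. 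The only technical care required is the faithfulness of the isotropy representation and the passage from an ``$n^2$-dimensional subgroup'' to ``all of $U(n)$'', both of which are standard.
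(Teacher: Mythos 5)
Your proof is correct and is essentially the paper's argument in unbundled form: the paper runs the same dimension count as a single free action of $\mathrm{Iso}(M,\omega)$ on the unitary frame bundle $FM$ (whose dimension $n^{2}+2n$ is exactly your $2n+\dim_{\mathbb{R}}U(n)$), proves freeness by the same geodesic argument as your faithfulness claim, and in the equality case deduces transitivity on $FM$ from connectedness --- which is precisely your ``transitive on $M$ plus full $U(n)$ isotropy,'' whence constant holomorphic sectional curvature. The only cosmetic difference is that the frame-bundle packaging lets the paper skip your two final steps (promoting an $n^{2}$-dimensional subgroup of $U(n)$ to all of $U(n)$, and invoking $U(n)$-transitivity on the unit sphere), since transitivity on unitary frames immediately carries any unit $(1,0)$-vector to any other.
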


\begin{proof}
Consider the unitary frame bundle $FM$ of $(M,\omega)$ which consists of all $(p,\{e_{i}\}_{i=1}^{n})$, where $p\in M$ and $\{e_{i}\}_{i=1}^{n}$ is a unitary frame of $T_{p}^{1,0}M$. In other words, $FM$ is a principle $U(n)$-bundle over $M$. Since the real dimension of unitary group $U(n)$ is $n^{2}$, then $FM$ is a manifold of real dimension $n^{2}+2n$.

Consider the group action $\mathrm{Iso}(M,\omega) \times FM \to FM$ by
\[
\phi \cdot (p,\{e_{i}\}_{i=1}^{n}) = \big(\phi(p),\{d\phi(e_{i})\}_{i=1}^{n}\big).
\]
By considering minimal geodesics emanating from a point we see that the above action is free. Then each orbit is an injectively immersed submanifold of $FM$ that is diffeomorphic to $\mathrm{Iso}(M,\omega)$. Therefore
\[
\dim_{\mathbb{R}}\mathrm{Iso}(M,\omega) \leq \dim_{\mathbb{R}}FM = n^{2}+2n.
\]
When $\dim_{\mathbb{R}}\mathrm{Iso}(M,\omega)=n^{2}+2n$, using the connectedness of $FM$, each orbit is equal to $FM$. This shows that the above action is transitive. Then $(M,\omega)$ has constant holomorphic sectional curvature.
\end{proof}

The next proposition is key to us, which improves the previous result and gives an optimal characterization of the constancy of holomorphic sectional curvature in terms of the dimension of the biholomorphic isometry group.

\begin{proposition}\label{dimension prop 2}
Let $(M,\omega)$ be an almost Hermitian manifold of real dimension $2n$, $n\geq2$, and $G\subseteq \mathrm{Iso}(M,\omega)$ be a Lie subgroup. If $\dim_{\mathbb{R}}G\geq n^{2}+3$, then $(M,\omega)$ has constant holomorphic sectional curvature.
\end{proposition}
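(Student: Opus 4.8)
The plan is to exploit the general principle that a sufficiently large isometry group forces the isotropy representation to act transitively on the unit tangent sphere, which in turn pins down the holomorphic sectional curvature. Fix a point $p\in M$ and let $G_p=\{\phi\in G:\phi(p)=p\}$ be the isotropy subgroup. Since the orbit $G\cdot p$ is a submanifold of $M$ of real dimension at most $2n$, the orbit--stabilizer relation gives
\[
\dim_{\mathbb{R}}G_p=\dim_{\mathbb{R}}G-\dim_{\mathbb{R}}(G\cdot p)\geq (n^2+3)-2n=n^2-2n+3.
\]
An isometry fixing $p$ with trivial differential at $p$ must be the identity, since it fixes every geodesic emanating from $p$; hence the isotropy representation $G_p\to U(T_p^{1,0}M)\cong U(n)$ is faithful. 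Passing to the identity component, we obtain a connected subgroup $H\subseteq U(n)$ with $\dim_{\mathbb{R}}H\geq n^2-2n+3$.

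The crux is the following purely group-theoretic statement: every connected subgroup $H\subseteq U(n)$ with $\dim_{\mathbb{R}}H\geq n^2-2n+3$ acts transitively on the unit sphere $S^{2n-1}\subset\mathbb{C}^n$. I would establish this by controlling the connected subgroups of $U(n)$ of large dimension. The largest connected subgroup that fails to act transitively on $S^{2n-1}$ is (conjugate to) $U(1)\times U(n-1)$, the stabilizer of a complex line, of dimension $n^2-2n+2$; meanwhile $SU(n)$ and $U(n)$ are transitive, and the remaining maximal connected subgroups (those preserving a real or quaternionic structure, such as $SO(n)\cdot U(1)$ and $Sp(n/2)\cdot U(1)$, or the images of irreducible representations of simple groups) are either already transitive on the sphere or of dimension strictly below $n^2-2n+3$. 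Thus any $H$ with $\dim_{\mathbb{R}}H> n^2-2n+2$ must be transitive. Note that the threshold is exactly calibrated: $n^2-2n+3$ is one more than $\dim_{\mathbb{R}}(U(1)\times U(n-1))$, matching the product example $\mathbb{CP}^{n-1}\times\mathbb{CP}^{1}$, whose biholomorphic isometry group has dimension $n^2+2$.

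Granting this, $G_p$ acts transitively on the unit sphere of $T_pM$, and two consequences follow. First, the tangent space $T_p(G\cdot p)$ is a $G_p$-invariant subspace of $T_pM$; since transitivity on the sphere forces the isotropy representation to be irreducible over $\mathbb{R}$, and since $\dim_{\mathbb{R}}G_p\leq\dim_{\mathbb{R}}U(n)=n^2<n^2+3\leq\dim_{\mathbb{R}}G$ prevents the orbit from being a point, we get $T_p(G\cdot p)=T_pM$. Hence $G\cdot p$ is open, and as distinct orbits are disjoint open sets covering the connected manifold $M$, the action of $G$ on $M$ is transitive. Second, because isometries preserve the curvature tensor $R$ and $G_p$ carries any unit vector of $T_pM$ to any other, the holomorphic sectional curvature $R(X,\bar X,X,\bar X)$ is independent of the unit vector $X\in T_p^{1,0}M$; call its value $c$. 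Transitivity of $G$ on $M$ together with the invariance of $R$ under $G$ then propagates this to every point, yielding $R(X,\bar X,X,\bar X)=c$ for all $p$ and all unit $X$, which is precisely the constancy of the holomorphic sectional curvature.

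The main obstacle is the group-theoretic key lemma, namely ruling out non-transitive connected subgroups of dimension at least $n^2-2n+3$. The delicate point is the borderline case of a codimension-one (\emph{cohomogeneity one}) orbit on the sphere: the naive orbit--stabilizer induction on $n$ loses exactly one dimension and therefore does not close, so one must argue that a cohomogeneity-one action of such a large group cannot occur unless the group already appears on the list above. I expect to handle this either by invoking the classification of maximal connected subgroups of $U(n)$ (equivalently, the Montgomery--Samelson--Borel classification of transitive actions on spheres) or by a direct analysis of the slice representation at a principal orbit.
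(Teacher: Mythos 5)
Your reduction is sound, and its skeleton actually parallels the first half of the paper's argument: the isotropy group $G_p$ has dimension at least $n^2-2n+3$, its isotropy representation into $U(n)$ is faithful, the orbit of $p$ cannot be a point, and once one knows that $d\phi$ for $\phi\in G_p$ moves any unit vector (hence any complex line) of $T_p^{1,0}M$ to any other, invariance of the curvature tensor plus homogeneity gives constancy of the holomorphic sectional curvature. The genuine gap is that your key lemma --- every connected subgroup of $U(n)$ of dimension at least $n^2-2n+3$ acts transitively on $S^{2n-1}$ --- is asserted but never proved, and this lemma \emph{is} the mathematical content of Proposition~\ref{dimension prop 2}. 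The lemma is true, but your sketch does not yet establish it: (i) the assertion that every maximal connected subgroup of $U(n)$ is either transitive or of dimension at most $n^2-2n+2$ does not by itself yield the claim, since a non-transitive $H$ of large dimension could a priori sit inside a \emph{transitive} maximal subgroup such as $SU(n)$ or $Sp(n/2)\cdot U(1)$; one must recurse through the chain of transitive groups and control their maximal connected subgroups as well (for instance, at $n=4$ one meets $Sp(2)\subset SU(4)$ of dimension $10=n^2-2n+2$, which has to be excluded via its normalizer $Sp(2)\cdot U(1)$); (ii) carrying this out honestly requires Dynkin-type classification of maximal connected subgroups of $SU(n)$, or the Montgomery--Samelson--Borel classification of transitive sphere actions supplemented by a cohomogeneity-one analysis --- and you yourself flag the cohomogeneity-one borderline case as the point where your induction fails to close. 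As written, the essential difficulty is deferred to machinery that is neither quoted precisely nor verified.

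It is instructive to contrast this with how the paper avoids classification altogether. The paper first proves the elementary, self-contained Lemma~\ref{transitive lemma}: a Lie subgroup of the biholomorphic isometry group of an almost Hermitian $2n$-manifold of dimension at least $n^2+1$ acts transitively; the proof is a bare-hands dimension count on the stabilizer, using the $J$-adapted splitting $T_pM=\big((V_1\cap JV_1)\oplus W_1\big)\oplus\big((V_2\cap JV_2)\oplus W_2\big)$ to bound $\dim_{\mathbb{R}}G\leq n^2$ whenever the action is not transitive. Proposition~\ref{dimension prop 2} then follows by applying this \emph{same} lemma twice: once to $M$ (giving homogeneity), and once to the projectivized tangent space $\mathbb{P}(T_p^{1,0}M)\cong\mathbb{CP}^{n-1}$, viewed as an almost Hermitian manifold of real dimension $2(n-1)$ whose isometry group is $PU(n)$; the stabilizer modulo the central circle $\{\xi I_n\}$ has dimension at least $n^2-2n+3-1=(n-1)^2+1$, exactly meeting the lemma's threshold. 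Note that the paper only needs transitivity on complex lines, which is strictly weaker than your transitivity on the unit sphere, and this weakening is precisely what makes an elementary counting argument suffice. To rescue your route you must either carry out the classification check in full (with the recursion of point (i) and precise citations), or replace the sphere-transitivity lemma by the line-transitivity statement and prove it by a direct dimension count --- at which point you have essentially reconstructed the paper's proof.
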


This result fails if we only assume $\dim_{\mathbb{R}}G\geq n^{2}+2$, since one has
\[
\dim_{\mathbb{R}}\mathrm{Iso}\big((\mathbb{CP}^{n-1},\omega_{\mathbb{CP}^{n-1}})\times(\mathbb{CP}^1,\omega_{\mathbb{CP}^1})\big)= n^{2}+2,
\]
but $\mathbb{CP}^{n-1}\times\mathbb{CP}^1$ cannot have constant holomorphic sectional curvature.

\begin{proof}
By Lemma \ref{transitive lemma} below, we know that the Lie group action $G\times M\to M$ is transitive. It then suffices to show that the holomorphic sectional curvature of all complex lines in $T_{p}^{1,0}M$ is constant at any given point $p\in M$.

Denote the orbit and stabilizer of the action by $O$ and $S$:
\[
O = \{\phi(p)~|~\phi\in G\}, \ \ \
S = \{\phi\in G~|~\phi(p) = p\}.
\]
Note that $S$ is a closed Lie subgroup of $G$ and
\begin{equation}\label{dimension prop 2 eqn 1}
\dim_{\mathbb{R}}S = \dim_{\mathbb{R}}G-\dim_{\mathbb{R}}O \geq n^{2}+3-2n.
\end{equation}
For each $\phi\in S$, the tangent map $d\phi:T_{p}^{1,0}M\to T_{p}^{1,0}M$ at $p$ can be regarded as an isometry of $(\mathbb{C}^{n},\omega_{\mathrm{Euc}})$. We claim that the action of $S$ on $T^{1,0}_{p}M$ is faithful. Indeed, if $d\phi=\mathrm{id}$ for some isometry $\phi\in S$, then $\phi=\mathrm{id}$ by the uniqueness of geodesics.
Therefore we can identify $S$ with
\[
T := \{d\phi:T_{p}^{1,0}M\to T_{p}^{1,0}M~|~\phi\in S\},
\]
which is a Lie subgroup of $U(n)$. Then the Lie groups
\[
H := T/\{\xi I_{n}\in T~|~\xi\in\mathbb{C},\ |\xi|=1\}
\]
and
\[
PU(n)=U(n)/\{\xi I_{n}~|~\xi\in\mathbb{C},\ |\xi|=1\} \cong \mathrm{Iso}\big(\mathbb{P}(T^{1,0}_{p}M),\omega_{\mathbb{P}(T^{1,0}_{p}M)}\big)
\]
fit into the following commutative diagram:
\[ \begin{tikzcd}
T \arrow[hookrightarrow]{r}{} \arrow[twoheadrightarrow]{d}{} & U(n) \arrow[twoheadrightarrow]{d}{} \\
H \arrow[hookrightarrow]{r}{}  & PU(n)
\end{tikzcd}
\]
Since each element in $\mathbb{P}(T^{1,0}_{p}M)$ corresponds to a complex line in $T_{p}^{1,0}M$. To prove the holomorphic sectional curvature of all complex lines in $T_{p}^{1,0}M$ is constant, we show that the induced action of $H$ on $\mathbb{P}(T^{1,0}_{p}M)$ is transitive. Using \eqref{dimension prop 2 eqn 1}, we obtain
\[
\dim_{\mathbb{R}}H \geq \dim_{\mathbb{R}}T-1 = \dim_{\mathbb{R}}S-1 \geq n^{2}+3-2n-1 = (n-1)^{2}+1.
\]
Then the transitivity follows from Lemma \ref{transitive lemma}.
\end{proof}

\begin{lemma}\label{transitive lemma}
Let $(M,\omega)$ be an almost Hermitian manifold of real dimension $2n$ and $G\subset\mathrm{Iso}(M,\omega)$ be a Lie subgroup satisfying $\dim_{\mathbb{R}}G\geq n^{2}+1$, then the group action $G\times M\to M$ is transitive.
\end{lemma}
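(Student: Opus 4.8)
The plan is to show that every $G$-orbit is open: since a full-dimensional orbit is automatically open and $M$ is connected, this would force there to be a single orbit, i.e. transitivity. Suppose instead that the action is not transitive. Then not every orbit is open, so I may pick $p\in M$ lying on a non-open orbit $O=\{\phi(p):\phi\in G\}$, so that $\dim_{\mathbb{R}}O\leq 2n-1$ and the normal space $N:=(T_pO)^{\perp}\subseteq T_pM$ is nonzero, of real dimension $k:=\dim_{\mathbb{R}}N\geq 1$. Writing $S=\{\phi\in G:\phi(p)=p\}$ for the stabilizer, the orbit--stabilizer relation gives $\dim_{\mathbb{R}}G=\dim_{\mathbb{R}}O+\dim_{\mathbb{R}}S$, so the strategy is to bound $\dim_{\mathbb{R}}S$ from above and play it against $\dim_{\mathbb{R}}O=2n-k$.

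To estimate $\dim_{\mathbb{R}}S$, first observe that each $\phi\in S$ fixes $p$ and preserves $O$, so $d\phi_p$ preserves $T_pO$ and hence $N$. Since $\phi$ preserves $g$ and $J$, the map $d\phi_p$ is orthogonal and commutes with $J$, i.e. it is unitary; moreover $\phi\mapsto d\phi_p$ is injective by the uniqueness of geodesics emanating from $p$ (as in the proof of Proposition \ref{dimension prop 2}). Thus $S$ embeds as a Lie subgroup of $\{A\in U(n):A(N)=N\}$, and the crux is to estimate the dimension of the latter. Here I would use $J$ to split $N$ canonically: set $P=N\cap JN$, the largest $J$-invariant subspace of $N$, with $a:=\dim_{\mathbb{C}}P$; inside $N+JN$ the orthogonal complement of $P$ is a complex subspace $Q_0$ of complex dimension $c:=k-2a$ carrying the totally real subspace $N'=N\cap Q_0$ of real dimension $c$; and $Q^{\perp}:=(N+JN)^{\perp}$ is complex of dimension $m:=n-a-c$. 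Any $A\in U(n)$ preserving $N$ commutes with $J$, hence preserves $P$ and $N+JN$, and therefore respects the orthogonal decomposition $\C^n=P\oplus Q_0\oplus Q^{\perp}$; on $P$ it lies in $U(a)$, on $Q_0$ it must preserve the real form $N'$ and hence lies in $O(c)$, and on $Q^{\perp}$ it is unconstrained. This identifies the stabilizer of $N$ with $U(a)\times O(c)\times U(m)$, giving
\[
\dim_{\mathbb{R}}S\leq a^2+\binom{c}{2}+m^2,\qquad 2a+c=k\geq 1,\ \ a+c\leq n.
\]

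The final step is the elementary inequality
\[
a^2+\binom{c}{2}+m^2\leq (n-1)^2+k-1\qquad(k=2a+c\geq 1),
\]
which, together with $\dim_{\mathbb{R}}O=2n-k$, yields
\[
\dim_{\mathbb{R}}G=\dim_{\mathbb{R}}O+\dim_{\mathbb{R}}S\leq (2n-k)+(n-1)^2+k-1=n^2,
\]
contradicting $\dim_{\mathbb{R}}G\geq n^2+1$. To verify the inequality I would note that $f(a,c):=a^2+\binom{c}{2}+(n-a-c)^2-(n-1)^2-2a-c$ has positive definite quadratic part $2a^2+2ac+\tfrac32 c^2$, hence is convex, so its maximum over the lattice polytope $\{a,c\geq 0,\ a+c\leq n,\ 2a+c\geq 1\}$ is attained at an extreme lattice point; checking the four vertices $(0,1),(1,0),(n,0),(0,n)$ gives values $-1,-1,-1,-1-\tfrac12 n(n-1)$, all $\leq -1$.

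I expect the main obstacle to be the dimension count for $\{A\in U(n):A(N)=N\}$, precisely because $N$ need not be a complex subspace. The genuinely new feature compared with the transitive (equality) case of Proposition \ref{dimension prop 1} is the presence of totally real directions, which contribute an orthogonal factor $O(c)$ of dimension $\binom{c}{2}$ rather than a unitary one; getting this bookkeeping right via the splitting into complex, totally real, and free parts is exactly what produces the sharp threshold $n^2+1$, as opposed to a weaker bound that would fail to exclude the product $\mathbb{CP}^{n-1}\times\mathbb{CP}^1$.
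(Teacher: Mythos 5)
Your proposal is correct and is essentially the paper's own argument: contradiction via orbit--stabilizer, the faithful isotropy representation of the stabilizer into $U(n)$, the $J$-splitting into complex and totally real pieces (your $P$, $N'$, $Q^{\perp}$ are exactly the paper's $V_2\cap JV_2$, $W_2$, $V_1\cap JV_1$, and your $Q_0$ equals $W_1\oplus W_2$), and the resulting count $\dim_{\mathbb{R}}S\leq \dim U(a)+\dim O(c)+\dim U(m)$ forcing $\dim_{\mathbb{R}}G\leq n^2$; the only differences are that you determine the action on $Q_0$ from $N'$ by complex-linearity where the paper uses the projection isomorphism $\mu\colon W_1\to W_2$ and $\mu$-equivariance, and that you finish with a two-variable convexity/vertex check where the paper maximizes a one-variable quadratic in two cases. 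One phrasing to tighten: the region $\{a,c\geq 0,\ a+c\leq n,\ 2a+c\geq 1\}$ has the non-lattice vertex $(1/2,0)$, at which your $f$ is positive, so the argument genuinely needs integrality of $(a,c)$ --- one should maximize over the convex hull of the lattice points (where $2a+c\geq 1$ becomes $a+c\geq 1$), and the vertices of that hull are precisely your four points, so your verification does go through.
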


\begin{proof}
Fix a point $p\in M$, and consider the orbit and stabilizer:
\[
O = \{\phi(p)~|~\phi\in G\}, \ \ \
S = \{\phi\in G~|~\phi(p) = p\}.
\]
We argue by contradiction. Assume that the action is not transitive. Then
\[
k:=\dim_{\mathbb{R}}O\leq 2n-1.
\]
We still identify $S$ with
\[
T = \{d\phi:T_{p}M\to T_{p}M~|~\phi\in S\}.
\]
Then
\[
\dim_{\mathbb{R}}G = \dim_{\mathbb{R}}O+\dim_{\mathbb{R}}S = k+\dim_{\mathbb{R}}T.
\]

Define two spaces $V_{1}$ and $V_{2}$ by
\[
V_{1} = T_{p}O, \ \ V_{2} = (T_{p}O)^{\perp}.
\]
Since $d\phi$ is an isometry, it preserves $V_{1}$ and $V_{2}$. Let $W_{1}$ and $W_{2}$ be the orthogonal complement of $V_{1}\cap JV_{1}$ in $V_{1}$ and $V_{2}\cap JV_{2}$ in $V_{2}$ respectively, where $J$ denotes the almost complex structure. Then
\[
T_{p}M = V_{1}\oplus V_{2} = \big((V_{1}\cap JV_{1})\oplus W_{1}\big)\oplus \big((V_{2}\cap JV_{2})\oplus W_{2}\big).
\]
Observe that $JW_{1}\subset W_{1}\oplus W_{2}$ and $W_{1}\cap JW_{1}=\{0\}$. So the orthogonal projection map $JW_1\to W_2$ is injective, and hence
\[
\dim_{\mathbb{R}}W_{1}=\dim_{\mathbb R}JW_1\leq\dim_{\mathbb{R}}W_{2}.
\]
Interchanging $W_{1}$ and $W_{2}$, we obtain that
$\dim_{\mathbb{R}}W_{1}=\dim_{\mathbb{R}}W_{2}$, and thus the composed map $W_1\to JW_1\to W_2$, which we denote by $\mu$, yields an isomorphism
$$W_1\overset{\mu}{\cong}W_2.$$

We claim that $d\phi$ preserves $V_{1}\cap JV_{1}$, $V_{2}\cap JV_{2}$, $W_{1}$ and $W_{2}$.
Indeed, for any $v\in V_{1}\cap JV_{1}$, we have $v\in V_{1}$ and $Jv\in V_{1}$, which implies
\[
d\phi(v)\in V_{1}, \ \
J(d\phi(v)) = d\phi(Jv)\in V_{1}
\]
and so
\[
d\phi(v)\in V_{1}\cap JV_{1}.
\]
This shows that $d\phi$ preserves $V_{1}\cap JV_{1}$. Since $d\phi$ acts isometrically on $V_1$, $d\phi$ also preserves $W_{1}$. That $d\phi$ preserves $V_{2}\cap JV_{2}$ and $W_{2}$ follows in the same way. From here we further deduce that $d\phi$ is $\mu$-equivariant, namely, one has
$
d\phi(\mu(v))=\mu(d\phi(v))
$
for any $v\in W_1$.
So the action of $d\phi$ on $W_2$ is completely determined by that on $W_1$.

Since $V_{1}\cap JV_{1}$ is $J$-invariant, its real dimension must be even. Set $\dim_{\mathbb{R}}(V_{1}\cap JV_{1})=2r$ and then
\[
\dim_{\mathbb{R}}W_{1} = \dim_{\mathbb{R}}W_{2} = k-2r, \ \ \dim_{\mathbb{R}}(V_{2}\cap JV_{2}) = 2(n-k+r).
\]
It is clear that
\[
\max(0,k-n) \leq r \leq \frac{k}{2}.
\]
We will derive a contradiction by showing that
$$\dim_{\mathbb{R}}G\leq n^{2}.$$
The key point is that each $d\phi\in T$ is completely determined by its action on $V_{1}\cap JV_{1}$, $V_{2}\cap JV_{2}$ and $W_{1}$.
The argument will be split into two cases.

\bigskip
\noindent
{\bf Case 1.} $k\leq2n-2$.
\bigskip

In this case, we have
\[
\begin{split}
\dim_{\mathbb{R}}T \leq {} & \dim_{\mathbb{R}}U(r)+\dim_{\mathbb{R}}U(n-k+r)+\dim_{\mathbb{R}}O(k-2r) \\
= {} & r^{2}+(n-k+r)^{2}+\frac{1}{2}(k-2r)(k-2r-1).
\end{split}
\]
Regard the above as a quadratic polynomial of $r$. Direct calculation shows the maximum is achieved at $r=\frac{k}{2}$, then
\[
\dim_{\mathbb{R}}G = k+\dim_{\mathbb{R}}T \leq k+\left(\frac{k}{2}\right)^{2}+\left(n-\frac{k}{2}\right)^{2} \leq n^{2}.
\]

\bigskip
\noindent
{\bf Case 2.} $k=2n-1$.
\bigskip

In this case, we have $r=n-1$. Then
\[
\dim_{\mathbb{R}}T \leq \dim_{\mathbb{R}}U(n-1)+\dim_{\mathbb{R}}O(1)= (n-1)^{2}
\]
and so
\[
\dim_{\mathbb{R}}G \leq 2n-1+(n-1)^{2} = n^{2}.
\]

\end{proof}

\subsection{Proof of rigidity}
Now we are in the position to prove Theorem \ref{CPn rigidity}.

\begin{proof}[Proof of Theorem \ref{CPn rigidity}]
Using Proposition \ref{dimension prop 2}, $(M,\omega)$ has constant holomorphic sectional curvature. Since the underlying manifold $M$ is simply connected (as $M$ is Fano), then Theorem \ref{CPn rigidity} follows from \cite[Theorem 7.9]{KN69}.

\end{proof}

For later use, we prove the following result.

\begin{proposition}\label{KE rigidity}
Let $(M,\omega)$ be a compact K\"ahler manifold of complex dimension $n$ with $\Ric(\omega)\geq\omega$, $\lambda_{n^2+1}=1$, then $\Ric(\omega)=\omega$.
\end{proposition}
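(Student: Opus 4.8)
The plan is to connect the eigenvalue condition $\lambda_{n^2+1}=1$ to the dimension of the biholomorphic isometry group, and then invoke the rigidity machinery already developed. First I would observe that the hypothesis $\lambda_{n^2+1}=1$ means that the first eigenvalue equals $1$ (by Theorem \ref{lambda 1 estimate} we always have $\lambda_1 \geq 1$, so $\lambda_1 = 1$) and, more importantly, that the eigenspace for the eigenvalue $1$ has dimension at least $n^2+1$. Indeed, the ordering $\lambda_1 \leq \cdots \leq \lambda_{n^2+1} = 1$ forces $\lambda_1 = \cdots = \lambda_{n^2+1} = 1$, so the space $\Lambda_1(M,\omega)$ of eigenfunctions of $1$ satisfies $\dim_{\mathbb{R}} \Lambda_1(M,\omega) \geq n^2+1$.

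Next I would use Lemma \ref{isomorphism}, which shows that $\Lambda_1(M,\omega)$ is linearly isomorphic to $\mathfrak{h}(M,\omega) \subseteq \mathfrak{iso}(M,\omega)$ via $F(u) = J\nabla u$, and that $\mathfrak{h}(M,\omega)$ is the Lie algebra of a subgroup $H(M,\omega) \subseteq \mathrm{Iso}(M,\omega)$. Therefore
\[
\dim_{\mathbb{R}} H(M,\omega) = \dim_{\mathbb{R}} \mathfrak{h}(M,\omega) = \dim_{\mathbb{R}} \Lambda_1(M,\omega) \geq n^2+1.
\]
At this point the goal becomes an application of the transitivity and rigidity results proved for almost Hermitian manifolds. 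The subtlety is that the threshold in Proposition \ref{dimension prop 2} is $n^2+3$, which is strictly larger than the $n^2+1$ we have available, so I cannot directly conclude constancy of the holomorphic sectional curvature. Instead I would aim only for the weaker conclusion $\Ric(\omega)=\omega$.

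The key idea is that since $\dim_{\mathbb{R}} G \geq n^2+1$ with $G = H(M,\omega)$, Lemma \ref{transitive lemma} guarantees that the action of $G$ on $M$ is transitive. Every element $W \in \mathfrak{h}(M,\omega)$ satisfies $\iota_W \theta = 0$ by definition, and since $\theta$ is $G$-invariant (as $G$ acts by isometries preserving $J$, hence preserving $\Ric(\omega)$ and $\omega$), transitivity propagates the vanishing information across all of $M$. Concretely, because the vector fields in $\mathfrak{h}(M,\omega)$ span the tangent space at each point (a consequence of transitivity, as the orbit map is a submersion and the fundamental vector fields generate $T_pM$), the contraction condition $\iota_W\theta = 0$ holding for a spanning set of $W$'s forces $\theta = 0$ pointwise. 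I expect this final step --- showing that transitivity plus the contraction conditions $\iota_W\theta=0$ upgrades to $\theta \equiv 0$ --- to be the main obstacle, since one must check carefully that the fundamental vector fields of $G$ genuinely span $T_pM$ (using that the stabilizer acts on $\theta$ trivially at $p$, or equivalently that $\theta_p$ is invariant and annihilated by all fundamental directions). Once $\theta = 0$ is established, the conclusion $\Ric(\omega) = \omega$ is immediate from the definition $\theta = \Ric(\omega) - \omega$.
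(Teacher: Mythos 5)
Your proposal is correct and follows essentially the same route as the paper's proof: Lemma \ref{isomorphism} gives $\dim_{\mathbb{R}}H(M,\omega)\geq n^2+1$, Lemma \ref{transitive lemma} gives transitivity of the $H(M,\omega)$-action, and the resulting fact that the Killing fields $J\nabla u$ span $T_pM$ at every point, combined with $\iota_{J\nabla u}\theta=0$ from Lemma \ref{eigenfunction lemma}~(b), forces $\theta\equiv 0$. The step you flagged as the main obstacle is exactly how the paper concludes: it verifies that the composition of the orbit map into the frame bundle with the bundle projection is a submersion onto the orbit $O_p=M$, so the fundamental vector fields do span $T_pM$, just as you anticipated.
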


\begin{proof}
Let $H:=H(M,\omega)$ be the Lie subgroup of $\mathrm{Iso}(M,\omega)$ whose Lie algebra is $\mathfrak{h}(M,\omega)$ in Lemma \ref{isomorphism}. Then we obtain
\[
\dim_{\mathbb{R}}H = \dim_{\mathbb{R}}\Lambda_{1}(M,\omega) \geq n^{2}+1.
\]
Let $FM$ be the unitary frame bundle of $(M,\omega)$ which consists of all $(p,\{e_{i}\}_{i=1}^{n})$, where $p\in M$ and $\{e_{i}\}_{i=1}^{n}$ is a unitary frame of $T_{p}^{1,0}M$.
Consider the group action $H\times FM \to FM$. Fix $(p,\{e_{i}\}_{i=1}^{n})$, define
\[
\Phi: H \to FM, \ \
\Phi(\phi) = \big(\phi(p),\{d\phi(e_{i})\}_{i=1}^{n}\big).
\]
The image of $\Phi$ is actually the orbit $O_{(p,\{e_i\}^n_{i=1})}$ under the action of $H$ on $FM$. It is clear that $O_{(p,\{e_i\}^n_{i=1})}$ is a submanifold of $FM$ and $\pi(O_{(p,\{e_i\}^n_{i=1})})=O_p$, where $\pi: FM\rightarrow M$ is the bundle projection and $O_p$ is the orbit of $p$ under the action of $H$ on $M$, i.e.
\[
O_{p} = \{\phi(p)~|~\phi\in H\}.
\]
The tangent map $d\Phi:\mathfrak{h}(M,\omega)\rightarrow T_{(p,\{e_i\}^n_{i=1})}FM$ is injective and $\mathrm{Im}(d\Phi)=T_{(p,\{e_i\})}O_{(p,\{e_i\}^n_{i=1})}$. Since $\pi$ is a submersion, $d\pi\circ\Phi:\mathfrak{h}(M,\omega)\rightarrow T_p O_p$ is onto. By Lemma \ref{transitive lemma}, we know that $O_p=M$. It follows that $d\pi\circ \Phi: \mathfrak{h}(M,\omega)\rightarrow T_p M$ is onto. Then we obtain
\[
\mathrm{Span}\{J\nabla u~|~\text{$u$ is an eigenfunction of eigenvalue 1}\}=T_p M.
\]
Combining this with Lemma \ref{eigenfunction lemma} (b), we obtain $\theta=0$ at $p$. Since $p$ is arbitrary, we know that $\theta=0$ and so $\Ric(\omega)=\omega$.
\end{proof}

\section{convergence of eigenfunctions}
\label{sec:conv}
In this section, we will prove some convergence results of eigenfunctions in the Gromov--Hausdorff topology. We will consider two cases: the sequence of K\"ahler--Einstein metrics and the sequence of almost K\"ahler--Einstein metrics. The general results are proved in \cite{CC00b}.

At first, let $(M_i,\omega_i)$ be a sequence of K\"ahler manifolds with $\Ric(\omega_i)=\omega_i$. Note that the sequence $(M_i,\omega_i)$ is volume non-collapsing, since
\[
\int_{M_i}\omega^n_i=(2\pi)^nc_1(M_i)^n\geq (2\pi)^n.
\]
Here we used the algebraic fact that $c_1(M_i)^n$ is a positive integer. By Cheeger--Colding theory \cite{CC97}, up to a subsequence, we get
$$
(M_i,\omega_i)\xrightarrow{\mathrm{GH}}(Z,d),
$$
where $(Z,d)$ is a compact metric space. It admits a singular-regular decomposition
$$
Z=\mathcal{R}\cup\mathcal{S},
$$
where $\mathcal{R}$ is an open convex subset in $Z$ by \cite{CC97,CN12}.  By \cite{T13,DS14}, $Z$ is homeomorphic to an $n$-dimensional $\mathbb Q$-Fano variety equipped with a (singular) K\"ahler--Einstein metric $\omega_\infty$, whose restriction on $\mathcal{R}$ is a smooth K\"ahler--Einstein metric and the metric completion of $(\mathcal{R},\omega_{\infty}|_{\mathcal{R}})$ coincides with $(Z,d)$. Furthermore, the singular locus $\mathcal{S}$ agrees with algebro-geometric singular locus of $Z$ when viewed as a variety, which has complex codimension at least two.

If $u_i$ is a sequence of functions on $(M_i,\omega_i)$ satisfying $\Delta_i u_i=-\lambda_i u_i$ and $\int_{M_i} u_i^2\omega_i^n=1$, where $\lambda_i\rightarrow 1$, then $u_i$ are uniformly Lipschitz by Cheng-Yau's gradient estimate \cite{CY75}. So after possibly passing to a subsequence, $u_i$ converges to a Lipschitz function $u$ on $Z$. Since $\omega_i\rightarrow \omega_\infty$ smoothly on $\mathcal R$, by the standard elliptic regularity theory, $u$ is smooth and satisfies
\begin{equation}\label{limit function on R}
\Delta_{\omega_\infty} u=-u \text { on } \mathcal{R}.
\end{equation}

\begin{lemma}\label{Killing}
$J \nabla u$ is a Killing vector field on $\mathcal R$ and can be extended to be the imaginary part of a holomorphic vector field on $Z$.
\end{lemma}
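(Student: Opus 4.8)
The plan is to manufacture, on the regular part $\mathcal{R}$, a genuine holomorphic vector field out of $u$, to recognize $J\nabla u$ as its imaginary part, and finally to extend it across the singular set $\mathcal{S}$ using the normality of $Z$.

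\textbf{Step 1 (holomorphicity on $\mathcal{R}$).} First I would rerun the Bochner computation of Lemma \ref{eigenfunction lemma}, now tracking the defect produced by an eigenvalue $\lambda\neq 1$. On a compact K\"ahler--Einstein manifold with $\Ric(\omega)=\omega$ (so $\theta=0$), the same integrations by parts give, for any eigenfunction $v$ with $\Delta v=-\lambda v$,
\[
\int_{M}v_{kl}v^{kl}\,\omega^{n}=\lambda(\lambda-1)\int_{M}v^{2}\,\omega^{n}.
\]
Applying this on $(M_i,\omega_i)$ to $v=u_i$ and using $\int_{M_i}u_i^2\,\omega_i^n=1$ and $\lambda_i\to 1$, we obtain
\[
\int_{M_i}(u_i)_{kl}(u_i)^{kl}\,\omega_i^{n}=\lambda_i(\lambda_i-1)\longrightarrow 0,
\]
so the $(2,0)$-part of the Hessian of $u_i$ tends to zero in $L^2$. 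Since $\omega_i\to\omega_\infty$ and $u_i\to u$ in $C^\infty_{\mathrm{loc}}(\mathcal R)$, for every compact $K\subset\mathcal R$ the covariant Hessians converge and $\int_{K}u_{kl}u^{kl}\,\omega_\infty^n=\lim_i\int_{K}(u_i)_{kl}(u_i)^{kl}\,\omega_i^n=0$; hence $u_{kl}=0$, equivalently $u_{\bar k\bar l}=0$, on all of $\mathcal R$. This is precisely the statement that $V:=\nabla^{1,0}u$ is a holomorphic vector field on the complex manifold $\mathcal R$.

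\textbf{Step 2 (Killing field and imaginary part).} Granted holomorphicity, the proof of Lemma \ref{eigenfunction lemma}(d) applies verbatim on $\mathcal R$: from $\iota_{J\nabla u}\omega_\infty=-du$ and $d\omega_\infty=0$, Cartan's formula yields $L_{J\nabla u}\omega_\infty=-d^2u=0$, while the holomorphicity of $V$ gives $L_{J\nabla u}J=0$; thus $J\nabla u$ is a Killing field on $\mathcal R$. Moreover, exactly as in the correspondence of Lemma \ref{isomorphism}, the real field $J\nabla u$ is the imaginary part of the holomorphic vector field $V=-\nabla u+\sqrt{-1}\,J\nabla u$ on $\mathcal R$.

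\textbf{Step 3 (extension across $\mathcal S$).} It remains to extend $V$ to a holomorphic vector field on all of $Z$. Here $Z$ is a normal ($\mathbb Q$-Fano) variety, $\mathcal R=Z_{\mathrm{reg}}$, and $\mathcal S=Z_{\mathrm{sing}}$ has complex codimension at least two, while $V$ is a holomorphic section of $T_Z|_{\mathcal R}$. I would invoke the reflexivity of the tangent sheaf: $\mathcal T_Z:=\mathcal{H}om(\Omega^1_Z,\mathcal O_Z)$ is reflexive, hence satisfies Serre's condition $S_2$, so the restriction map $H^0(Z,\mathcal T_Z)\to H^0(Z\setminus\mathcal S,\mathcal T_Z)=H^0(\mathcal R, T_{\mathcal R})$ is an isomorphism across the codimension-$\geq 2$ set $\mathcal S$. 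Thus $V$ extends uniquely to a holomorphic vector field on $Z$; since the extension agrees with $V$ on the dense open set $\mathcal R$, its imaginary part is a global extension of $J\nabla u$, as required.

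The step I expect to be the main obstacle is Step 3. The attractive feature of the reflexive-sheaf argument is that it is purely algebraic, and therefore needs no control on the size of $V$ near $\mathcal S$ (in particular no estimate relating $\omega_\infty$ to a smooth background metric across the singularities). The delicate point one must be comfortable with is precisely that $V$, manufactured analytically from the metric limit on $\mathcal R$, is holomorphic for the complex structure of the variety $Z$ --- this relies on the cited structure theory identifying $\omega_\infty|_{\mathcal R}$ with a smooth K\"ahler--Einstein metric on $Z_{\mathrm{reg}}$ --- and that the extended section respects the algebraic structure of $Z$ rather than only the metric on $\mathcal R$. An equivalent and more geometric route would be via equivariant Gromov--Hausdorff convergence: each $J\nabla u_i$ is Killing on $M_i$ by Lemma \ref{eigenfunction lemma}(d) and generates a one-parameter subgroup of $\mathrm{Iso}(M_i,\omega_i)$; an equivariant limit then yields a one-parameter group of isometries of $(Z,d)$ acting biholomorphically on $\mathcal R$ and generated there by $J\nabla u$, whose infinitesimal generator furnishes the desired holomorphic vector field on $Z$.
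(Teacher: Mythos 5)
Your proposal is correct, but it takes a genuinely different route from the paper at two of its three steps (Step 2 coincides with the paper's, which just reruns Lemma \ref{eigenfunction lemma}). For the vanishing of the $(2,0)$-Hessian, the paper works intrinsically on the limit space: it integrates the Bochner identity against cut-off functions $\gamma_\varepsilon$ vanishing near $\mathcal{S}$ with $\int_{\mathcal R}|\nabla\gamma_\varepsilon|^2\leq\varepsilon$, using only the limit equation $\Delta_{\omega_\infty}u=-u$, the Einstein equation on $\mathcal{R}$, and the Lipschitz bound on $u$. You instead prove the exact identity $\int_{M_i}(u_i)_{kl}(u_i)^{kl}\,\omega_i^n=\lambda_i(\lambda_i-1)\to 0$ upstairs on the compact manifolds $M_i$, where no boundary terms arise, and pass it to the limit using $C^\infty_{\mathrm{loc}}$ convergence on $\mathcal{R}$; this is valid here (the convergence is smooth on $\mathcal R$ for Einstein sequences, as the paper notes) and is cleaner and quantitative. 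What the paper's cut-off argument buys is robustness: it is intrinsic to $(Z,d)$ and is reused verbatim in Lemma \ref{lap}, where the approximating sequence is only almost K\"ahler--Einstein and one has no smooth convergence of $\omega_i$ (hence no convergence of Hessians of $u_i$) on $\mathcal{R}$, so your Step 1 would not transfer to that setting. For the extension across $\mathcal{S}$, the paper invokes the proof of \cite[Lemma 6.9]{T15}, whereas you give a Hartogs-type argument: $Z$ is normal, $\mathcal{S}=Z_{\mathrm{sing}}$ has complex codimension at least two, and the tangent sheaf, being a dual sheaf, is reflexive, so sections over $Z\setminus\mathcal{S}$ extend uniquely. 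This is correct (in the analytic category it reduces to Riemann's second extension theorem on normal complex spaces) and has the merit of needing no control on $V$ near $\mathcal{S}$; the one point you rightly flag---that $\nabla^{1,0}u$ is holomorphic for the complex structure of the variety $Z$---is exactly what the identification of $(\mathcal{R},J,\omega_\infty)$ with $Z_{\mathrm{reg}}$ from \cite{T13,DS14} provides, so your argument is complete as written for the K\"ahler--Einstein sequences treated in this lemma.
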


\begin{proof}
As in the proof of \cite[Lemma 3.4]{TW20}, we can prove the result using cut-off functions. For any $\ve>0$, denoting
$$T_\ve(\mathcal{S})=\{x\in Z~|~\operatorname{dist}(x, \mathcal{S}) \leq \ve\},$$
there is a cut-off function $\gamma_\ve \in C_0^{\infty}(Z \backslash \mathcal{S})$ such that
$$
\int_{\mathcal{R}}\left|\nabla \gamma_\ve\right|^2 \leq \ve, \ \
\gamma_\ve \equiv 0 \text { in } T_\ve(\mathcal{S}).
$$
Using \eqref{limit function on R}, we can compute as in Lemma \ref{eigenfunction lemma} to get
\[
\begin{split}
\int_{\mathcal R}\gamma_\ve^2 u_{i}u^{i}\omega^{n}
= {} & -\int_{\mathcal R}\gamma_\ve^2g^{k\bar{l}}u_{ki\bar{l}}u^{i}\omega^{n}+ \int_{\mathcal R}\gamma_\ve^2 R_{i\bar{j}}u^{i}u^{\bar{j}}\omega^{n} \\
= {} & \int_{\mathcal R}\gamma_\ve^2u_{ki}u^{ki}\omega^{n}+2 \int_{\mathcal R}\gamma_\ve g^{k\bar{l}}u_{ki}u^{i}{(\gamma_\ve)}_{\bar l}\omega^{n}+\int_{\mathcal R}\gamma_\ve^2 u_{i}u^{i}\omega^{n}.
\end{split}
\]
Recall $u$ is a Lipschitz function on $Z$. Then $|\nabla u|\leq C$ on $\mathcal{R}$ for some constant $C$. Combining this with the Cauchy-Schwarz inequality,
\[
2\int_{\mathcal R}\gamma_\ve g^{k\bar{l}}u_{ki}u^{i}{(\gamma_\ve)}_{\bar l}\omega^{n}
\geq -\delta\int_{\mathcal R}\gamma^2_\ve u_{ki}u^{ki}\omega^{n}-\frac{C}{\delta} \int_{\mathcal{R}}\left|\nabla \gamma_\ve\right|^2 \omega^{n}.
\]
It follows that
\[
\int_{\mathcal R}\gamma_\ve^2 u_{i}u^{i}\omega^{n}
\geq (1-\delta)\int_{\mathcal R}\gamma_\ve^2u_{ki}u^{ki}\omega^{n}
-\frac{C}{\delta} \int_{\mathcal{R}}\left|\nabla \gamma_\ve\right|^2 \omega^{n}
+\int_{\mathcal R}\gamma_\ve^2 u_{i}u^{i}\omega^{n}.
\]
Letting $\ve \rightarrow 0$ and then $\delta\rightarrow 0$, we get
\[
\int_{\mathcal R} u_{i}u^{i}\omega^{n} \geq \int_{\mathcal R}u_{ki}u^{ki}\omega^{n}+ \int_{\mathcal R}u_{i}u^{i}\omega^{n}.
\]
Hence $\int_{\mathcal R}u_{ki}u^{ki}\omega^{n}=0$ on $\mathcal R$. Then the same argument of Lemma \ref{eigenfunction lemma} shows that $J \nabla u$ is a Killing vector field on $\mathcal R$. By the proof of \cite[Lemma 6.9]{T15}, $J \nabla u$ can be extended to be the imaginary part of a holomorphic vector field on $Z$.
\end{proof}
The above lemma also holds when $(M_i,\omega_i)$ is a sequence of almost K\"ahler--Eintein manifolds. We recall the definition for the reader's convenience.

\begin{definition}$(\text{\rm{Tian--Wang} \cite{TW15}})\textbf{.}$
    A sequence of pointed compact K\"ahler manifolds $(M_i,\omega_i,p_i)$ is called almost K\"ahler--Einstein if the following conditions are satisfied.
\begin{itemize}\setlength{\itemsep}{1mm}
    \item $\Ric(\omega_i)\geq-\omega_i$.
    \item $p_i\in M_i$ and $\vol(B_{\omega_i}(p_i,1))\geq\kappa>0$ for some constant $\kappa$.
    \item There is a constant $\lambda_i\in[-1,1]$ such that the flow
\[
\begin{cases}
\,\frac{\partial \omega_i(t)}{\partial t}=-\Ric(\omega_i(t))+\lambda_i\omega_i(t), \\[1mm]
\,\omega_i(0)=\omega_i,
\end{cases}
\]
    exists on $M_i\times [0,1]$ and $\int_0^1\int_{M_i}|R(\omega_i(t))-n\lambda_i|\omega_i^ndt\to 0$.
    \item
    $\int_{M_i}|\Ric(\omega_i)-\lambda_i\omega_i|\omega^n_i\to 0$.
\end{itemize}
\end{definition}

The next result gives a typical example of almost K\"ahler--Einstein sequence, which slightly generalizes \cite[Theorem 6.2]{TW15}.
\begin{proposition}
    Let $(M_i,\omega_i)$ be a sequence of compact K\"ahler manifolds with $\omega_i\in 2\pi c_1(M_i)$,  $\Ric(\omega_i)\geq(1-\varepsilon_i)\omega_i$ for $\varepsilon_i\to 0$. Then for any $p_i\in M_i$, the sequence $(M_i,\omega_i,p_i)$ is almost K\"ahler--Einstein.
\end{proposition}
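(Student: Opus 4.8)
The plan is to verify the four conditions in the definition of an almost K\"ahler--Einstein sequence with the uniform choice $\lambda_i=1$ (each $M_i$ is Fano, since $[\omega_i]=2\pi c_1(M_i)>0$), keeping track of the dependence on $\varepsilon_i$. Two global volume bounds will be used throughout: the lower bound $\int_{M_i}\omega_i^n=(2\pi)^nc_1(M_i)^n\geq(2\pi)^n$ (the anticanonical degree being a positive integer), and the upper bound $\vol(M_i,\omega_i)\leq C(n)$, which follows from Bishop's volume comparison applied to the real Ricci lower bound $\Ric\geq(1-\varepsilon_i)g$ on the $2n$-manifold. Condition~(1), namely $\Ric(\omega_i)\geq-\omega_i$, is immediate from the hypothesis once $\varepsilon_i\leq2$, which holds for large $i$. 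For the non-collapsing condition~(2), Myers' theorem gives $\diam(M_i,\omega_i)\leq C(n)$, and the Bishop--Gromov relative volume comparison together with the lower volume bound yields $\vol(B_{\omega_i}(p_i,1))\geq \tfrac{V(1)}{V(\diam)}(2\pi)^n\geq\kappa(n)>0$.

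For condition~(4), set $\theta_i:=\Ric(\omega_i)-\omega_i$. The hypothesis gives $\theta_i\geq-\varepsilon_i\omega_i$, so the eigenvalues $\mu_1,\dots,\mu_n$ of $\theta_i$ with respect to $\omega_i$ satisfy $\mu_j\geq-\varepsilon_i$. Since $[\theta_i]=0$, integrating $n\,\theta_i\wedge\omega_i^{n-1}=(\mathrm{tr}_{\omega_i}\theta_i)\,\omega_i^n$ shows $\int_{M_i}\mathrm{tr}_{\omega_i}\theta_i\,\omega_i^n=0$. Writing $\sum_j|\mu_j|=\mathrm{tr}_{\omega_i}\theta_i+2\sum_{\mu_j<0}|\mu_j|\leq \mathrm{tr}_{\omega_i}\theta_i+2n\varepsilon_i$ and integrating, I obtain $\int_{M_i}|\theta_i|\,\omega_i^n\leq 2n\varepsilon_i\vol(M_i,\omega_i)\leq C(n)\varepsilon_i\to0$, which is exactly condition~(4).

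The main work is condition~(3), which requires propagating the near-Einstein bound along the normalized K\"ahler--Ricci flow $\partial_t\omega=-\Ric(\omega)+\omega$. This flow preserves the class $2\pi c_1(M_i)$ and, by the classical long-time existence result on Fano manifolds, exists for all $t\geq0$, in particular on $M_i\times[0,1]$; moreover it preserves the total volume. The key point is the scalar curvature evolution $\partial_t R=\Delta R+|\Ric|^2-R\geq \Delta R+\tfrac1n R^2-R$, using $|\Ric|^2\geq R^2/n$. Since $R(\omega_i(0))\geq n(1-\varepsilon_i)$, the maximum principle and comparison with the ODE $\dot x=\tfrac1n x(x-n)$, $x(0)=n(1-\varepsilon_i)$, give $R(\omega_i(t))\geq n(1-e\varepsilon_i)$ for all $t\in[0,1]$ (setting $y=n-x$ one has $\dot y\leq y$, so $y(t)\leq n\varepsilon_i e^t\leq ne\varepsilon_i$). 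On the other hand the cohomological identity $[\Ric(\omega_i(t))]=[\omega_i(t)]$ forces the $\omega_i(t)^n$-average of $R(\omega_i(t))$ to equal $n$ at every time. Running the positive/negative part argument once more yields $\int_{M_i}|R(\omega_i(t))-n|\,\omega_i(t)^n\leq 2ne\varepsilon_i\vol(M_i,\omega_i)$; integrating over $[0,1]$ and invoking the volume bound shows this tends to $0$, establishing condition~(3).

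I expect the ODE-comparison estimate for the scalar curvature to be the crux: one must show that the lower bound $R\geq n(1-\varepsilon_i)$ degrades only by a bounded factor $e$ over the unit time interval, so that the smallness $\varepsilon_i\to0$ survives uniformly for $t\in[0,1]$; the remaining integral estimates are then direct consequences of cohomological averaging and the two-sided volume control. One technical point to watch is that the displayed deviation is integrated against the flowing volume form $\omega_i(t)^n$, which is the natural measure here; the one-sided comparison $\omega_i(t)^n\leq e^{ne\varepsilon_i}\omega_i^n$, coming from $\partial_t\log\omega_i(t)^n=n-R(\omega_i(t))$ and the lower scalar bound, lets one pass between the flowing and initial volume forms in the directions needed.
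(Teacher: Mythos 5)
Your proposal is correct and follows essentially the same route as the paper: take $\lambda_i=1$, get non-collapsing from Myers, Bishop--Gromov and $c_1(M_i)^n\geq 1$, propagate the scalar curvature lower bound along the normalized K\"ahler--Ricci flow via the maximum principle (your ODE comparison with $\dot x=\tfrac1n x(x-n)$ is equivalent to the paper's linearized inequality $\partial_t(R-n)\geq\Delta(R-n)+(R-n)$, both resting on $|\Ric|^2\geq R^2/n$), and conclude both $L^1$ estimates from the cohomological identities $[\Ric(\omega_i(t))]=[\omega_i(t)]$ and $[\Ric(\omega_i)-\omega_i]=0$ together with the uniform volume bounds. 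The only cosmetic differences are your eigenvalue-splitting derivation of condition (4), where the paper instead uses the triangle inequality against the nonnegative form $\Ric(\omega_i)-(1-\varepsilon_i)\omega_i$, and your remark on comparing $\omega_i(t)^n$ with $\omega_i^n$, which is unnecessary since the flow fixes the class and hence the total volume.
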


\begin{proof}
The condition $\Ric(\omega_i)\geq-\omega_i$ trivially holds. The volume non-collapsing condition is also easy to check. Indeed, by Bonnet--Mayers theorem, there exists $D=D(n)>0$ such that $\mathrm{diam}(M_i,\omega_i)< D$. Thus the Bishop--Gromov volume comparison yields that
    $$
    \frac{\vol(B_{\omega_i}(p_i,1))}{\vol(M_i,\omega_i)}=\frac{\vol(B_{\omega_i}(p_i,1))}{\vol(B_{\omega_i}(p_i,D))}\geq C^{-1}(n)>0,
    $$
    which implies that
    $$
    \vol(B_{\omega_i}(p_i,1))\geq C^{-1}(n)\int_{M_i}\omega_i^n = C^{-1}(n)(2\pi)^nc_1(M_i)^n \geq C^{-1}(n)2\pi^n.
    $$
    Here we used the algebraic fact that $c_1(M_i)^n$ is a positive integer.

We now verify the condition on the K\"ahler-Ricci flow. We choose $\lambda_i:=1$. The normalized K\"ahler-Ricci flow
\[
\begin{cases}
\,\frac{\partial \omega_i(t)}{\partial t}=-\Ric(\omega_i(t))+\omega_i(t), \\[1mm]
\,\omega_i(0)=\omega_i,
\end{cases}
\]
exists on $M_i\times [0,\infty)$. Moreover, the scalar curvature $R(\omega_i(t))$ evolves as follows:
    $$
    \frac{\partial R(\omega_i(t))}{\partial t}=\Delta_{\omega_{i}(t)} R(\omega_i(t))+|\Ric(\omega_i(t))|^2-R(\omega_i(t)).
    $$
    So one has
    $$
    \frac{\partial (R(\omega_i(t))-n)}{\partial t}\geq\Delta_{\omega_{i}(t)} (R(\omega_i(t))-n)+(R(\omega_i(t))-n).
    $$
    By the maximum principle, we obtain that
    $$
    (R(\omega_i(t))-n)\geq(R(\omega_i)-n)e^t\geq-n\varepsilon_ie^t.
    $$
    On the other hand, we have
\[
\begin{cases}
\,\frac{d[\omega_{i}(t)]}{d t}=-2\pi c_{1}(M_{i})+[\omega_i(t)], \\[1mm]
\,[\omega_i(0)] = [\omega_i] = 2\pi c_{1}(M_{i}),
\end{cases}
\]
which implies $[\omega_i(t)]=2\pi c_{1}(M_{i})$ and so
    $$
    \int_{M_i}(R(\omega_i(t))-n)\omega^n_i(t)=0\text{ for any }t\in[0,1].
    $$
It then follows that
\begin{equation*}
\begin{split}
& \int_0^1\int_{M_i}|R(\omega_i(t))-n|\omega^n_i(t)dt \\
\leq {} & \int_0^1\int_{M_i}\left(\big(R(\omega_i(t))-n+n\varepsilon_ie^t\big)+n\varepsilon_ie^t\right)\omega^n_i(t)dt \\
= {} & 2n\varepsilon_i\vol(M_i,\omega_i)\int_0^1 e^{t}dt\to 0.
\end{split}
\end{equation*}
    Here note that $\vol(M_i,\omega_i)$ has uniform upper bound, by the Bishop--Gromov volume comparison.

    Finally, we estimate
    \begin{equation*}
        \begin{aligned}
            \int_{M_i}|\Ric(\omega_i)-\omega_i|\omega^n_i&\leq\int_{M_i}\big(|\Ric(\omega_i)-(1-\varepsilon_i)\omega_i|+\varepsilon_i|\omega_i|\big)\omega^n_i\\
            &\leq\int_{M_i}\big(R(\omega_i)-(1-\varepsilon_i)n+\varepsilon_in\big)\omega^n_i\\[2mm]
            &=2n\vol(M_i,\omega_i)\varepsilon_i\to 0.
        \end{aligned}
    \end{equation*}
    So we finish the proof.
\end{proof}

Combining \cite[Theorem 2]{TW15} (see also \cite[Theorem B.1]{T15}) with Tian's partial $C^0$-estimate (cf. \cite[Theorem 5.9]{T15}), we obtain the following consequence.

\begin{corollary}\label{alke}
    Let $(M_i,\omega_i)$ be a sequence of compact K\"ahler manifolds with $\omega_i\in 2\pi c_1(M_i)$,  $\Ric(\omega_i)\geq(1-\varepsilon_i)\omega_i$ for $\varepsilon_i\to 0$. Assume that $(M_i,\omega_i)\xrightarrow{\mathrm{GH}}(Z,d)$ in the Gromov--Hausdorff topology. Then $(Z,d)$ is a compact metric space with a regular-singular decomposition: $Z=\mathcal{R}\cup\mathcal{S}$, and the following properties hold.
    \begin{itemize}\setlength{\itemsep}{1mm}
        \item There is a complex structure $J$ on $\mathcal{R}$ such that $(\mathcal{R},d,J)$ is a smooth convex open K\"ahler manifold.
        \item $\Ric(\omega_\infty)=\omega_\infty$, where $\omega_\infty$ be the induced K\"ahler form on $\mathcal{R}$.
        \item $Z$ is a normal projective variety and $(Z,d)$ is the metric completion of $(\mathcal{R},\omega_\infty)$.
        \item $\dim_{\mathcal H}\mathcal{S}\leq 2n-4$.
    \end{itemize}
\end{corollary}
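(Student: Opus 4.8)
The plan is to reduce the statement to the cited structure theory for almost K\"ahler--Einstein sequences, so that the four asserted properties can be read off directly; the only genuinely new input is the verification of the hypotheses, which is precisely the content of the preceding proposition.

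First I would invoke the preceding proposition: under the assumptions $\omega_i\in2\pi c_1(M_i)$ and $\Ric(\omega_i)\geq(1-\varepsilon_i)\omega_i$ with $\varepsilon_i\to0$, the pointed sequence $(M_i,\omega_i,p_i)$ is almost K\"ahler--Einstein in the sense of Tian--Wang, with the choice $\lambda_i=1$. In particular the sequence is volume non-collapsing (as $c_1(M_i)^n\geq1$) and has Ricci curvature bounded below, so the Cheeger--Colding theory \cite{CC97,CN12} applies and the Gromov--Hausdorff limit $(Z,d)$ admits a regular-singular decomposition $Z=\mathcal{R}\cup\mathcal{S}$ with $\mathcal{R}$ open and convex. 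Feeding the almost K\"ahler--Einstein condition into \cite[Theorem 2]{TW15} (equivalently \cite[Theorem B.1]{T15}) then produces a smooth complex structure $J$ on $\mathcal{R}$ making $(\mathcal{R},d,J)$ a smooth K\"ahler manifold, together with the Hausdorff codimension bound $\dim_{\mathcal H}\mathcal{S}\leq2n-4$.

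For the Einstein equation on the limit I would use that the convergence $\omega_i\to\omega_\infty$ is smooth on $\mathcal{R}$ and that $\int_{M_i}|\Ric(\omega_i)-\omega_i|\omega_i^n\to0$ (also established in the preceding proposition), together with $\lambda_i=1$; passing to the limit on the regular part yields $\Ric(\omega_\infty)=\omega_\infty$, which is part of the Tian--Wang structure theorem. Finally, to upgrade $Z$ from a metric space to a normal projective variety and to identify $(Z,d)$ with the metric completion of $(\mathcal{R},\omega_\infty)$, I would apply Tian's partial $C^0$-estimate \cite[Theorem 5.9]{T15}: under the almost K\"ahler--Einstein and non-collapsing hypotheses it furnishes, for a fixed large power $m$, uniform lower bounds for the Bergman kernel of $-mK_{M_i}$, hence uniform pluri-anticanonical embeddings $M_i\hookrightarrow\mathbb{CP}^{N}$ of bounded degree. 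After passing to a subsequence these embeddings converge to a projective variety which one identifies with $Z$ compatibly with the Gromov--Hausdorff convergence, giving the normal projective structure and the completion statement.

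The main obstacle is conceptual rather than computational: essentially all the hard analysis is packaged inside the cited theorems, so the real task is to check that their hypotheses hold under the weakened bound $\Ric(\omega_i)\geq(1-\varepsilon_i)\omega_i$ rather than exact K\"ahler--Einstein, and to ensure that the limit of the projective embeddings is genuinely the metric limit $Z$ and not merely homeomorphic to it. The first point is handled by the preceding proposition; the second rests on the compatibility of the partial $C^0$-estimate with the Cheeger--Colding convergence, which is the subtle part of the Tian--Wang theory being quoted.
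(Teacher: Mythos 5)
Your proposal is correct and takes essentially the same route as the paper: the paper's entire proof is the one-line observation that the corollary follows by combining the preceding proposition (which verifies the almost K\"ahler--Einstein condition with $\lambda_i=1$) with the Tian--Wang structure theorem \cite[Theorem 2]{TW15} (see also \cite[Theorem B.1]{T15}) and Tian's partial $C^0$-estimate \cite[Theorem 5.9]{T15}. The details you spell out --- non-collapsing via $c_1(M_i)^n\geq 1$, the Cheeger--Colding decomposition, the limiting Einstein equation on $\mathcal{R}$, and the projective structure on $Z$ via the partial $C^0$-estimate --- are precisely what those citations package.
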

Under the same condition as the above corollary, assume that $u_i$ is a sequence of functions on $(M_i,\omega_i)$ such that $\Delta_i u_i=-\lambda_i u_i, \int_{M_i} u_i^2\omega_i^n=1$, where $\lambda_i\rightarrow 1$, then $u_i$ converges to a Lipschitz function $u$ on $Z$. By \cite[Theorem 7.9]{CC00b}, $u$ is an eigenfunction on $(Z,d)$.

\begin{lemma}\label{lap}
One has $\Delta_{\omega_\infty} u=-u$ on $\mathcal{R}$ and $J \nabla u$ is a Killing vector field on $\mathcal R$ which can be extended to be the imaginary part of a holomorphic vector field on $Z$.
\end{lemma}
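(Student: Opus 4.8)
The plan is to follow the proof of Lemma \ref{Killing} almost verbatim, the only genuinely new point being to upgrade the convergence statement $u_i\to u$ to the \emph{smooth} eigenvalue equation on the regular part $\mathcal{R}$ in the almost-K\"ahler--Einstein setting, rather than the honest K\"ahler--Einstein one. Indeed, once the smooth equation on $\mathcal{R}$ and the identity $\Ric(\omega_\infty)=\omega_\infty$ are in place, the rest of the argument does not see the difference between the two regimes.

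First I would establish that $\Delta_{\omega_\infty}u=-u$ on $\mathcal{R}$. By Corollary \ref{alke}, on the regular set the metrics $\omega_i$ converge in $C^\infty_{\mathrm{loc}}$ to the smooth K\"ahler--Einstein metric $\omega_\infty$ with $\Ric(\omega_\infty)=\omega_\infty$. The functions $u_i$ are uniformly Lipschitz: the lower bound $\Ric(\omega_i)\geq-\omega_i$ together with the uniform diameter bound and $\lambda_i\to1$ permit the Cheng--Yau gradient estimate \cite{CY75}, exactly as before. Feeding the smooth local convergence and the equations $\Delta_i u_i=-\lambda_i u_i$ into interior elliptic regularity then yields $\Delta_{\omega_\infty}u=-u$ pointwise on $\mathcal{R}$, and records the uniform bound $|\nabla u|\leq C$ on $\mathcal{R}$ used below.

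The core computation proceeds as in Lemma \ref{Killing}. Choosing cut-off functions $\gamma_\ve\in C_0^{\infty}(Z\setminus\mathcal{S})$ with $\gamma_\ve\equiv0$ on a neighborhood $T_\ve(\mathcal{S})$ of the singular set and $\int_{\mathcal{R}}|\nabla\gamma_\ve|^2\leq\ve$ — such cut-offs exist by \cite[Lemma 3.4]{TW20}, since $\dim_{\mathcal H}\mathcal{S}\leq2n-4$ has high codimension — I would run the Bochner-type integration by parts of Lemma \ref{eigenfunction lemma} localized against $\gamma_\ve^2$. The decisive simplification is that $\Ric(\omega_\infty)=\omega_\infty$ forces $R_{i\bar j}=g_{i\bar j}$, so the curvature term becomes $\int\gamma_\ve^2 u_i u^i\omega^n$ and the analogue of the $\theta$-term vanishes identically. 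After cancellation this leaves
\[
0=\int_{\mathcal R}\gamma_\ve^2 u_{ki}u^{ki}\,\omega^{n}+2\int_{\mathcal R}\gamma_\ve\,g^{k\bar{l}}u_{ki}u^{i}(\gamma_\ve)_{\bar l}\,\omega^{n}.
\]
Bounding the cross term by Cauchy--Schwarz with $|\nabla u|\leq C$, that is $2\int_{\mathcal R}\gamma_\ve g^{k\bar l}u_{ki}u^i(\gamma_\ve)_{\bar l}\omega^n\geq-\delta\int_{\mathcal R}\gamma_\ve^2 u_{ki}u^{ki}\omega^n-\frac{C}{\delta}\int_{\mathcal R}|\nabla\gamma_\ve|^2\omega^n$, then letting $\ve\to0$ and afterwards $\delta\to0$, I obtain $\int_{\mathcal R}u_{ki}u^{ki}\omega^n=0$, hence $u_{ij}=0$ on $\mathcal{R}$. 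The argument of Lemma \ref{eigenfunction lemma} then shows $J\nabla u$ is a Killing field on $\mathcal{R}$. Finally, to extend $J\nabla u$ to the imaginary part of a holomorphic vector field on all of $Z$, I would invoke \cite[Lemma 6.9]{T15}, exactly as at the end of Lemma \ref{Killing}.

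The main obstacle I anticipate is the justification of the cut-off estimate and, more subtly, ensuring that the $C^\infty_{\mathrm{loc}}$ convergence on $\mathcal{R}$ and the exact identity $\Ric(\omega_\infty)=\omega_\infty$ genuinely persist in the almost-K\"ahler--Einstein regime; both inputs are supplied by Corollary \ref{alke} through the Tian--Wang structure theory, so once they are granted the remainder is a faithful repetition of the K\"ahler--Einstein argument.
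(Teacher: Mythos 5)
There is a genuine gap at the very first step, and it is exactly the point where this lemma differs from Lemma \ref{Killing}. You claim that ``by Corollary \ref{alke}, on the regular set the metrics $\omega_i$ converge in $C^\infty_{\mathrm{loc}}$ to $\omega_\infty$,'' and you then run interior elliptic regularity to get $\Delta_{\omega_\infty}u=-u$ on $\mathcal{R}$. But Corollary \ref{alke} asserts no such convergence: it only says that the limit space has a regular--singular decomposition, that $(\mathcal{R},\omega_\infty)$ is a smooth K\"ahler--Einstein manifold, and that $Z$ is its metric completion. In the K\"ahler--Einstein case of Lemma \ref{Killing}, smooth convergence on $\mathcal{R}$ is available because the $\omega_i$ satisfy the Einstein equation, so $\varepsilon$-regularity gives uniform local curvature bounds near regular points. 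In the almost K\"ahler--Einstein regime of Theorem \ref{thm2}, the $\omega_i$ satisfy no equation at all --- only $\Ric(\omega_i)\geq(1-\delta_i)\omega_i$ plus integral conditions --- so there are no uniform interior estimates; the smooth K\"ahler--Einstein structure on $\mathcal{R}$ comes out of the Tian--Wang Ricci-flow-smoothing argument (applied to the flowed metrics $\omega_i(t)$, $t>0$), not from $C^\infty_{\mathrm{loc}}$ convergence of the original metrics $\omega_i$. Hence the elliptic-regularity passage to the pointwise equation does not go through as written.

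The paper closes this gap differently: it first invokes the Cheeger--Colding spectral convergence theorem \cite[Theorem 7.9]{CC00b}, which requires only measured Gromov--Hausdorff convergence under a Ricci lower bound, to conclude that $u$ is an eigenfunction of $(Z,d)$ in the generalized sense, i.e.\ $\int_{\mathcal R}\langle\nabla u,\nabla\phi\rangle=\int_{\mathcal R}u\phi$ for every Lipschitz $\phi$. It then uses the cut-off functions $\gamma_\ve$ (the same ones as in Lemma \ref{Killing}) to integrate by parts against the smooth metric $\omega_\infty$ on $\mathcal{R}$ and show this weak identity forces $\Delta_{\omega_\infty}u=-u$ on $\mathcal{R}$. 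Once that equation is in hand, the remainder of your argument --- the Bochner computation with cut-offs giving $u_{ij}=0$, the Killing property of $J\nabla u$, and the extension to a holomorphic vector field on $Z$ via \cite[Lemma 6.9]{T15} --- is correct and coincides with the paper's. So you should replace your first paragraph by the weak-eigenfunction argument (or otherwise justify the regularity of the convergence, which Corollary \ref{alke} alone does not give); the rest stands.
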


\begin{proof}
By the definition of Laplacian on the limit spaces in \cite{CC00b}, we know that for any Lipschitz function $\phi$,
$$\int_{\mathcal R} \langle \nabla u, \nabla \phi \rangle= \int_{\mathcal R}u\phi.$$ As in the proof of Lemma \ref{Killing}, choosing a cut-off function $\gamma_\ve$, we have
\[
\begin{split}
\int_{\mathcal R}\gamma_\ve\langle \nabla u, \nabla \phi \rangle
= {} & -\int_{\mathcal R}\gamma_\ve \phi \Delta_{\omega_\infty}  u -\int_{\mathcal R}\phi\langle \nabla u, \nabla \gamma_\ve \rangle.
\end{split}
\]
Letting $\ve\rightarrow 0$, we get
$$\int_{\mathcal R} \langle \nabla u, \nabla \phi \rangle= -\int_{\mathcal R}\phi \Delta_{\omega_\infty}  u.$$
It follows that
$$-\int_{\mathcal R}\phi \Delta_{\omega_\infty}  u=\int_{\mathcal R}u\phi. $$
which implies $\Delta_{\omega_\infty} u=-u$ on $\mathcal{R}$. Combining this with the same argument of Lemma \ref{Killing}, we complete the proof.
\end{proof}

\section{Gap Theorem}
In this section, we give the proof of Theorem \ref{thm1} and \ref{thm2}.

\begin{proof}[Proof of Theorem \ref{thm1}]
We argue by contradiction. Assume that the statement is false, then there exists a sequence of K\"ahler--Einstein manifolds $(M_i,\omega_i)$ with $\Ric(\omega_i)=\omega_i$, $\lambda_{n^2+3}^{\omega_i}\leq 1+\varepsilon_i$ for $\varepsilon_i\to 0$, such that $(M_i,\omega_i)$ is not biholomorphically isometric to $(\mathbb{CP}^n,\omega_{\mathbb{CP}^n})$ for any $i\geq1$. Up to taking a subsequence we can assume that $(M_i,\omega_i)\xrightarrow{\mathrm{GH}}(Z,d).$ As in the previous section, we have $$
Z=\mathcal{R}\cup\mathcal{S},
$$
where $\mathcal{R}$ is an open convex subset in $Z$.  Moreover, $Z$ is homeomorphic to an $n$-dimensional $\mathbb Q$-Fano variety equipped with a (singular) K\"ahler--Einstein metric $\omega_\infty$ whose restriction on $\mathcal{R}$ is a smooth K\"ahler--Einstein metric and the metric completion of $(\mathcal{R},\omega_{\infty}|_{\mathcal{R}})$ coincides with $(Z,d)$.

Let $u^{\omega_i}_1, u^{\omega_i}_2,...,u^{\omega_i}_{n^2+3}$ be the eigenfunctions of $\omega_i$ corresponding to eigenvalues $\lambda^{\omega_i}_1, \lambda^{\omega_i}_2,...,\lambda^{\omega_i}_{n^2+3}.$ Then for $j=1,2,...,n^2+3$, $u^{\omega_i}_j$ converges to functions $u_j$ on $(Z,d)$ and
\[
\Delta_{\omega_\infty} u_j=-u_j \text { on } \mathcal{R}.
\]
By Lemma \ref{Killing}, we know that $\dim_{\mathbb{R}}\mathrm{Iso}(Z,d)\geq n^2+3$.  By \cite[Theorem 4.1]{CC00a}, $\mathrm{Iso}(Z,d)$ is a  Lie group. Since $Z$ is compact, $\mathrm{Iso}(Z,d)$ is also compact. Note that the regular part $\mathcal R$ is invariant under the action of $\mathrm{Iso}(Z,d)$. By Lemma \ref{transitive lemma}, we know that the action of $\mathrm{Iso}(Z,d)$ on $\mathcal R$ is transitive. Then  $\mathcal R=\mathrm{Iso}(Z,d)\cdot p$ for any point $p\in \mathcal R$. It follows that  $\mathcal R$ is compact. Since it is dense in $Z$, it must hold that $\mathcal R=Z.$ Then $\omega_\infty$ is a smooth K\"ahler--Einstein metric on $Z$ and $\lambda_{n^2+3}=1$. By Theorem \ref{CPn rigidity}, $(Z,\omega_\infty)$ is biholomorphically isometric to $(\mathbb{CP}^n,\omega_{{\mathbb{CP}}^n})$.
Note that by \cite[p.92]{DS14} $M_i$ and $Z$ are in the same deformation family. Then we conclude that each $M_i$ is actually biholomorphic to $\mathbb{CP}^n$, thanks to the rigidity of $\mathbb{CP}^n$ \cite[Proposition 4.5]{P22}.
By the uniqueness of K\"ahler--Einstein metrics \cite{BM87} we find that $(M_i,\omega_i)$ is biholomorphically isometric to $(\mathbb{CP}^n,\omega_{{\mathbb{CP}}^n})$, which is a contradiction.
\end{proof}

\begin{proof}[Proof of Theorem \ref{thm2}]
We can argue similarly using the structure theorem of the limit space of almost K\"ahler--Einstein sequence (see Corollary \ref{alke}). Assume that the statement is false for some $\varepsilon>0$, then there exists a sequence of Fano manifolds $(M_i,\omega_i)$ with $\Ric(\omega_i)\geq (1-\delta_i)\omega_i$, $\lambda_{n^2+3}^{\omega_i}\leq 1+\delta_i$ and $\delta_i\to 0$, such that
\begin{align}\label{gh}
d_{\mathrm{GH}}\big((M_i,\omega_i),(\mathbb{CP}^n, \omega_{{\mathbb{CP}}^n})\big) \geq \varepsilon.
\end{align}
By Corollary \ref{alke}, we can assume that $Z=\mathcal{R}\cup\mathcal{S}$ and $Z$ has the same property as above. By Lemma \ref{lap},  we also know that $\dim_{\mathbb{R}}\mathrm{Iso}(Z,d)\geq n^2+3$. Then by the proof of Theorem \ref{thm1}, $(Z,d)$ is biholomorphically isometric to $(\mathbb{CP}^n,\omega_{{\mathbb{CP}}^n})$, which contradicts \eqref{gh}.
\end{proof}

\section{Examples}\label{examples}
We give several simple examples showing that the main results of our paper are optimal.
We first recall the following standard result.

\begin{lemma}
\label{lem:Cpn-multiplicity}
    The first eigenvalue of $(\mathbb{CP}^n,\omega_{\mathbb{CP}^n})$ has multiplicity $n^2+2n$. In other words, $\lambda_1=...=\lambda_{n^2+2n}=1$ and $\lambda_{n^2+2n+1}>1$.
\end{lemma}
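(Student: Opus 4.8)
The plan is to identify the eigenspace of eigenvalue $1$ with the Lie algebra of the biholomorphic isometry group, and then simply compute the dimension of the latter. The key input is Lemma~\ref{isomorphism}. Since $(\mathbb{CP}^n,\omega_{\mathbb{CP}^n})$ is K\"ahler--Einstein with $\Ric(\omega_{\mathbb{CP}^n})=\omega_{\mathbb{CP}^n}$, we have $\theta=\Ric(\omega_{\mathbb{CP}^n})-\omega_{\mathbb{CP}^n}=0$, so the constraint $\iota_W\theta=0$ defining $\mathfrak{h}$ is vacuous and $\mathfrak{h}(\mathbb{CP}^n,\omega_{\mathbb{CP}^n})=\mathfrak{iso}(\mathbb{CP}^n,\omega_{\mathbb{CP}^n})$. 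Hence the correspondence $F(u)=J\nabla u$ of Lemma~\ref{isomorphism} yields a linear isomorphism
\[
\Lambda_1(\mathbb{CP}^n,\omega_{\mathbb{CP}^n})\;\xrightarrow{\ \cong\ }\;\mathfrak{iso}(\mathbb{CP}^n,\omega_{\mathbb{CP}^n}).
\]

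First I would check that $1$ is genuinely the first eigenvalue. By Theorem~\ref{lambda 1 estimate} (Futaki) we have $\lambda_1\geq 1$, so in particular there is no eigenvalue in $(0,1)$. Since $\mathbb{CP}^n$ carries nontrivial biholomorphic isometries, $\mathfrak{iso}(\mathbb{CP}^n,\omega_{\mathbb{CP}^n})\neq 0$, and the isomorphism above forces $\Lambda_1\neq 0$; thus eigenvalue $1$ is attained and $\lambda_1=1$. Because the eigenvalues are listed in increasing order, the multiplicity of the first eigenvalue is exactly
\[
\dim_{\mathbb{R}}\Lambda_1(\mathbb{CP}^n,\omega_{\mathbb{CP}^n})=\dim_{\mathbb{R}}\mathfrak{iso}(\mathbb{CP}^n,\omega_{\mathbb{CP}^n})=\dim_{\mathbb{R}}\mathrm{Iso}(\mathbb{CP}^n,\omega_{\mathbb{CP}^n}).
\]

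It then remains to compute this dimension: the group of biholomorphic isometries of the Fubini--Study metric is $PU(n+1)=U(n+1)/U(1)$, of real dimension $(n+1)^2-1=n^2+2n$. This gives $\lambda_1=\cdots=\lambda_{n^2+2n}=1$ and $\lambda_{n^2+2n+1}>1$, as claimed. The step deserving the most care is the identification $\mathrm{Iso}(\mathbb{CP}^n,\omega_{\mathbb{CP}^n})=PU(n+1)$: one must argue that every diffeomorphism preserving both $g$ and $J$ is induced by a projective unitary transformation (a biholomorphism of $\mathbb{CP}^n$ lies in $PGL(n+1,\mathbb{C})$, and the requirement that it preserve $\omega_{\mathbb{CP}^n}$ cuts this down to $PU(n+1)$), and that no extra components inflate the dimension. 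Alternatively, I could bypass the isometry group entirely and exhibit the first eigenspace directly as the real span, modulo constants, of the functions $\frac{z_j\bar z_k}{|z|^2}$ (the components of the moment map into $\mathfrak{su}(n+1)^\ast$), verifying that they are eigenfunctions of eigenvalue $1$ and that they span a space of dimension $(n+1)^2-1=n^2+2n$; there the main work is the explicit Laplacian computation together with ruling out any smaller eigenvalue.
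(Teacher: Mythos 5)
Your proposal is correct and follows essentially the same route as the paper: the paper's proof consists precisely of invoking Lemma~\ref{isomorphism} (where $\theta=0$ makes the eigenspace isomorphic to the full Lie algebra $\mathfrak{iso}$) together with the computation $\dim_{\mathbb{R}}\mathrm{Iso}(\mathbb{CP}^n,\omega_{\mathbb{CP}^n})=\dim_{\mathbb{R}}PU(n+1)=n^2+2n$. Your additional care in confirming $\lambda_1=1$ via Theorem~\ref{lambda 1 estimate} and in flagging the identification $\mathrm{Iso}(\mathbb{CP}^n,\omega_{\mathbb{CP}^n})=PU(n+1)$ only makes explicit what the paper leaves implicit (and your alternative via the functions $z_j\bar z_k/|z|^2$ is exactly the explicit description the paper records immediately after the lemma).
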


\begin{proof}
This lemma follows from Lemma \ref{isomorphism} and
\[
\dim_\mathbb{R} \mathrm{Iso}(\mathbb{CP}^n,\omega_{\mathbb{CP}^n})=\dim_\mathbb{R}PU(n+1)=n^2+2n.
\]
\end{proof}

In fact, we can explicitly describe all the eigenfunctions of $\lambda_1$ on $(\mathbb{CP}^n,\omega_{\mathbb{CP}^n})$. Consider the smooth embedding
\begin{align*}
\varphi:\mathbb{CP}^n&\rightarrow H_{n+1}\\
[Z_0,...,Z_n]&\mapsto \bigg(\frac{Z_i\bar Z_j}{|Z|^2}\bigg),
\end{align*}
where $H_{n+1}$ denotes the space of $(n+1)\times(n+1)$ Hermitian matrices. For any traceless $A\in H_{n+1}$, define a function $f_A$ on $\mathbb{CP}^n$ by
$$
f_A([Z]):=\mathrm{tr}(A\varphi([Z])),\ [Z]\in\mathbb{CP}^n.
$$
Then $f_A$ is an eigenfunction of $\lambda_1$ and any eigenfunction is of this form.

Now, for positive integers $k$, $l$ such that $k+l=n$ and constant $a\in[0,1)$. Consider
\[
(M,\omega) =
(\mathbb{CP}^{k},\omega_{\mathbb{CP}^{k}})\times\big(\mathbb{CP}^{l},(1-a)\omega_{\mathbb{CP}^{l}}\big).
\]
Denote the the projection map by $\pi_{i}$ ($i=1,2$). Then
\[
\begin{split}
\Ric(\omega) = {} & \pi_{1}^{*}\Ric(\omega_{\mathbb{CP}^{k}})+\pi_{2}^{*}\Ric(\omega_{\mathbb{CP}^{l}}) \\[1mm]
= {} & \pi_{1}^{*}\omega_{\mathbb{CP}^{k}}+\pi_{2}^{*}\omega_{\mathbb{CP}^{l}} \\[1mm]
= {} & \omega+a\pi_{2}^{*}\omega_{\mathbb{CP}^{l}}\geq\omega. \\[1.5mm]
\end{split}
\]
Combining Lemma \ref{lem:Cpn-multiplicity} with Lemma \ref{example lemma a zero} and \ref{example lemma a non-zero} (see below), we see that $\lambda_{1}(M,\omega)=1$, and the multiplicity is $k^{2}+2k+l^{2}+2l$ when $a=0$ and $k^{2}+2k$ when $a\in(0,1)$.

\begin{lemma}\label{example lemma a zero}
When $a=0$, the followings are equivalent:
\begin{enumerate}\setlength{\itemsep}{1mm}
\item[(a)] $u$ is the eigenfunction of $1$ on $M$.
\item[(b)] $u=\pi_{1}^{*}v_{1}+\pi_{2}^{*}v_{2}$, where $v_{1}$ and $v_{2}$ are the eigenfunctions of $1$ on $\mathbb{CP}^{k}$ and $\mathbb{CP}^{l}$ respectively.
\end{enumerate}
\end{lemma}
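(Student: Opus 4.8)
The plan is to exploit the product structure directly. When $a=0$ the metric is the Riemannian product $\omega=\pi_1^*\omega_{\mathbb{CP}^k}+\pi_2^*\omega_{\mathbb{CP}^l}$, so in product holomorphic coordinates the inverse metric is block-diagonal and the complex Laplacian splits as $\Delta=\Delta_1+\Delta_2$, where $\Delta_i$ is the Laplacian of the $i$-th factor acting only in that factor's variables (and annihilating pullbacks from the other factor). The implication $(b)\Rightarrow(a)$ is then immediate: if $\Delta_1 v_1=-v_1$ and $\Delta_2 v_2=-v_2$, then since $\Delta_2\pi_1^*v_1=0=\Delta_1\pi_2^*v_2$ one gets $\Delta(\pi_1^*v_1+\pi_2^*v_2)=-(\pi_1^*v_1+\pi_2^*v_2)$, so $\pi_1^*v_1+\pi_2^*v_2$ is an eigenfunction of $1$.

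For the substantive direction $(a)\Rightarrow(b)$ I would invoke the spectral theory of the product. Eigenfunctions of $\Delta_i$ form an orthonormal basis of $L^2$ of each factor, so their products $\pi_1^*\phi_1\cdot\pi_2^*\phi_2$ form an orthonormal basis of $L^2(M)$ and are eigenfunctions of $\Delta$ whose eigenvalue is the sum $\mu_1+\mu_2$ of the two factor-eigenvalues. Consequently the eigenspace of $\Delta$ for eigenvalue $1$ is precisely the span of those products for which $\mu_1+\mu_2=1$. The heart of the argument is then to enumerate the solutions of this equation, and here the key input is that the first positive eigenvalue of each factor equals $1$ exactly: since $\Ric(\omega_{\mathbb{CP}^k})=\omega_{\mathbb{CP}^k}$ and $\Ric(\omega_{\mathbb{CP}^l})=\omega_{\mathbb{CP}^l}$, Theorem \ref{lambda 1 estimate} shows the spectrum of each factor lies in $\{0\}\cup[1,\infty)$, with the value $1$ attained by Lemma \ref{lem:Cpn-multiplicity}.

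Given this, $\mu_1+\mu_2=1$ forces $(\mu_1,\mu_2)\in\{(1,0),(0,1)\}$: neither factor-eigenvalue can lie in the open interval $(0,1)$, and they cannot both vanish. When $(\mu_1,\mu_2)=(1,0)$ the function $\phi_2$ is constant and the product is a scalar multiple of $\pi_1^*v_1$ with $v_1$ an eigenfunction of $1$ on $\mathbb{CP}^k$; symmetrically for $(0,1)$. Hence the eigenspace for eigenvalue $1$ is exactly $\pi_1^*\big(\text{eigenfunctions of }1\text{ on }\mathbb{CP}^k\big)\oplus\pi_2^*\big(\text{eigenfunctions of }1\text{ on }\mathbb{CP}^l\big)$, and every eigenfunction of $1$ on $M$ has the form $\pi_1^*v_1+\pi_2^*v_2$, which is $(b)$.

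The only genuinely non-routine point, and thus the main obstacle, is justifying completeness: that products of factor-eigenfunctions span the whole eigenspace of the product Laplacian, so that no ``non-separated'' eigenfunction is missed. This is standard Hilbert-space spectral theory via the identification $L^2(M)=L^2(\mathbb{CP}^k)\,\widehat{\otimes}\,L^2(\mathbb{CP}^l)$ under which $-\Delta=(-\Delta_1)\otimes\mathrm{id}+\mathrm{id}\otimes(-\Delta_2)$, but it is precisely what reduces the classification to the elementary observation about which pairs of factor-eigenvalues sum to $1$.
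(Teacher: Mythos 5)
Your proof is correct, but it takes a genuinely different route from the paper. The paper's argument is holomorphic in nature: it invokes Lemma \ref{isomorphism} with $\theta=0$ to identify the eigenspace $\Lambda_1(M,\omega)$ with the Lie algebra $\mathfrak{h}(M,\omega)$ of holomorphic Killing fields, uses the splitting $\mathfrak{h}(M,\omega)\cong\mathfrak{h}(\mathbb{CP}^{k},\omega_{\mathbb{CP}^{k}})\oplus\mathfrak{h}(\mathbb{CP}^{l},\omega_{\mathbb{CP}^{l}})$ for the product, and then pulls the decomposition of $J\nabla u$ back to a decomposition $u=\pi_1^*v_1+\pi_2^*v_2$ of the eigenfunction itself. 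You instead run the classical separation-of-variables argument: $L^2(M)=L^2(\mathbb{CP}^k)\,\widehat{\otimes}\,L^2(\mathbb{CP}^l)$, the product Laplacian acts as $\Delta_1\otimes\mathrm{id}+\mathrm{id}\otimes\Delta_2$, so the eigenspace for $1$ is spanned by products of factor eigenfunctions with eigenvalues summing to $1$; the Lichnerowicz bound (Theorem \ref{lambda 1 estimate}) on each Einstein factor confines both factor spectra to $\{0\}\cup[1,\infty)$, forcing $(\mu_1,\mu_2)\in\{(1,0),(0,1)\}$ and hence the asserted form of $u$, since the eigenvalue-$0$ factor eigenfunctions are constants. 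All the steps you flag as needing justification (block splitting of the complex Laplacian, completeness of the product eigenbasis, orthogonality of distinct eigenspaces) are standard and correctly handled. What each approach buys: yours is more elementary and self-contained, avoiding Hodge theory and the Matsushima-type correspondence entirely, and it adapts immediately to the case $a\in(0,1)$ (Lemma \ref{example lemma a non-zero}), where the rescaled factor $(1-a)\omega_{\mathbb{CP}^l}$ has positive spectrum in $[\tfrac{1}{1-a},\infty)$, forcing $\mu_2=0$; the paper's proof is shorter given the machinery of Lemma \ref{isomorphism} already developed, and keeps the exposition aligned with the theme of the paper, namely the dictionary between eigenfunctions and isometries that drives the main rigidity theorems.
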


\begin{proof}
For $(a)\Rightarrow(b)$, using Lemma \ref{isomorphism} and $\theta=0$,
\[
J\nabla u \in \mathfrak{h}(M,\omega) \cong \mathfrak{h}(\mathbb{CP}^{k},\omega_{\mathbb{CP}^{k}})\oplus\mathfrak{h}(\mathbb{CP}^{l},\omega_{\mathbb{CP}^{l}}).
\]
Then there exists $v_{1}\in\Lambda_{1}(\mathbb{CP}^{k},\omega_{\mathbb{CP}^{k}})$ and $v_{2}\in\Lambda_{1}(\mathbb{CP}^{l},\omega_{\mathbb{CP}^{l}})$ such that
\[
J\nabla u = J\nabla(\pi_{1}^{*}v_{1})+J\nabla(\pi_{2}^{*}v_{2})
\]
and so $u=\pi_{1}^{*}v_{1}+\pi_{2}^{*}v_{2}$. The converse direction $(b)\Rightarrow(a)$ can be verified by direct calculation.
\end{proof}

\begin{lemma}\label{example lemma a non-zero}
When $a\in(0,1)$, the followings are equivalent:
\begin{enumerate}\setlength{\itemsep}{1mm}
\item[(a)] $u$ is the eigenfunction of $1$ on $M$.
\item[(b)] $u=\pi_{1}^{*}v$, where $v$ is the eigenfunction of $1$ on $\mathbb{CP}^{k}$.
\end{enumerate}
\end{lemma}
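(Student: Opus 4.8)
The plan is to mirror the proof of Lemma \ref{example lemma a zero}, the only new ingredient being that the twisting form $\theta$ is now nonzero and nondegenerate in the $\mathbb{CP}^l$-directions. Recall from the computation preceding the lemma that $\theta=\Ric(\omega)-\omega=a\,\pi_2^*\omega_{\mathbb{CP}^l}$, which, since $a\in(0,1)$, is a nonzero semipositive form whose kernel is exactly the tangent spaces of the $\mathbb{CP}^k$-factor. This is the structural fact that will distinguish the two factors.

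For the implication $(a)\Rightarrow(b)$, I would first invoke Lemma \ref{isomorphism}: any eigenfunction $u$ of $1$ on $M$ gives $J\nabla u\in\mathfrak{h}(M,\omega)$, i.e.\ a holomorphic Killing field $W:=J\nabla u$ with $\iota_W\theta=0$. Next I would use the product structure of the isometry algebra. Since the two factors are irreducible and non-homothetic (they have different dimensions, or, when $k=l$, different scalings because $a\ne0$), the holomorphic Killing fields split as $\mathfrak{iso}(M,\omega)=\mathfrak{iso}(\mathbb{CP}^k,\omega_{\mathbb{CP}^k})\oplus\mathfrak{iso}(\mathbb{CP}^l,(1-a)\omega_{\mathbb{CP}^l})$, so I can write $W=W_1+W_2$ with each $W_i$ tangent to and pulled back from the $i$-th factor.

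The decisive step is to contract. Because $\theta$ involves only $\mathbb{CP}^l$-directions, $\iota_{W_1}\theta=0$ automatically, while $\iota_{W_2}\theta=a\,\pi_2^*\big(\iota_{W_2}\omega_{\mathbb{CP}^l}\big)$. Thus $\iota_W\theta=0$ forces $\iota_{W_2}\omega_{\mathbb{CP}^l}=0$, and since $\omega_{\mathbb{CP}^l}$ is nondegenerate this gives $W_2=0$. Hence $W=W_1$ is pulled back from $\mathbb{CP}^k$; writing $W_1=J\nabla(\pi_1^*v)$ for the eigenfunction $v$ on $\mathbb{CP}^k$ corresponding to $W_1$ under Lemma \ref{isomorphism} applied to $\mathbb{CP}^k$ (where the twisting form vanishes, so $\mathfrak{h}=\mathfrak{iso}$), I conclude $J\nabla u=J\nabla(\pi_1^*v)$, and the injectivity of $F$ yields $u=\pi_1^*v$.

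The converse $(b)\Rightarrow(a)$ is the routine direction: if $u=\pi_1^*v$ with $\Delta_{\mathbb{CP}^k}v=-v$, then since the Laplacian of the product metric splits and $u$ is constant along the $\mathbb{CP}^l$-factor, $\Delta_M u=\pi_1^*(\Delta_{\mathbb{CP}^k}v)=-u$, so $u$ is an eigenfunction of $1$ on $M$. The main obstacle I anticipate is justifying cleanly that $W$ decomposes along the two factors as a pulled-back Killing field on each; this rests on the irreducibility and non-homothety of the factors (equivalently, the K\"unneth splitting $H^0(M,T_M)=H^0(\mathbb{CP}^k,T_{\mathbb{CP}^k})\oplus H^0(\mathbb{CP}^l,T_{\mathbb{CP}^l})$ of holomorphic vector fields), after which the nondegeneracy of $\omega_{\mathbb{CP}^l}$ makes the remaining steps immediate.
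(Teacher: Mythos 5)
Your proof is correct, but it takes a more roundabout route than the paper's. The paper never decomposes the Killing field: it applies Lemma \ref{eigenfunction lemma} (b) to get $\iota_{J\nabla u}\theta=0$ with $\theta=a\,\pi_{2}^{*}\omega_{\mathbb{CP}^{l}}$, and then observes that, since $(\pi_2)_*$ is surjective on tangent spaces and $\omega_{\mathbb{CP}^{l}}$ is nondegenerate, this contraction identity alone forces $(\pi_{2})_{*}(J\nabla u)=0$, hence $(\pi_{2})_{*}(\nabla u)=0$ (as $J$ respects the product splitting), hence $u=\pi_{1}^{*}v$ for some function $v$ on $\mathbb{CP}^k$; the eigenvalue equation for $v$ then falls out of $\Delta_{M}u=\pi_{1}^{*}\Delta_{\mathbb{CP}^{k}}v$. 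In other words, the step you flagged as your ``main obstacle'' --- splitting $W=W_1+W_2$ along the factors --- is avoidable entirely: the nondegeneracy argument can be run on the full field $J\nabla u$ rather than on its hypothetical factor components, and it directly yields that $u$ itself (not just its associated Killing field) is pulled back from $\mathbb{CP}^k$. Your route is sound: the K\"unneth splitting $H^0(M,T_M)=H^0(\mathbb{CP}^k,T_{\mathbb{CP}^k})\oplus H^0(\mathbb{CP}^l,T_{\mathbb{CP}^l})$ does hold for compact factors, each component is then Killing for the factor metric (evaluate $L_Wg=0$ on pairs of vectors tangent to a single factor), and your reconstruction of $v$ via Lemma \ref{isomorphism} on $\mathbb{CP}^k$ together with injectivity of $F$ (which requires knowing $\pi_1^*v\in\Lambda_1(M,\omega)$, i.e.\ proving the converse direction first, as you do) closes the loop. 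What your approach buys is uniformity with Lemma \ref{example lemma a zero}, where the splitting of $\mathfrak{h}$ is genuinely the mechanism; what the paper's approach buys is brevity and self-containedness, requiring no structural facts about holomorphic vector fields on products.
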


\begin{proof}
For $(a)\Rightarrow(b)$, using Lemma \ref{eigenfunction lemma} (b), we obtain $\iota_{J\nabla u}\theta=0$, where $\theta=\Ric(\omega)-\omega=a\pi_{2}^{*}\omega_{\mathbb{CP}^{l}}$. It follows that $(\pi_{2})_{*}(J\nabla u)=0$ and so $(\pi_{2})_{*}(\nabla u)=0$. Then there exists a function $v$ on $\mathbb{CP}^{k}$ such that $u=\pi_{1}^{*}v$. Since $u$ is the eigenfunction of $1$ on $M$, we obtain
\begin{equation}\label{example lemma eqn}
\pi_{1}^{*}(\Delta_{\mathbb{CP}^{k}}v+v) = \Delta_{M}u+u = 0.
\end{equation}
which implies that $v$ is the eigenfunction of $1$ on $\mathbb{CP}^{k}$. The converse direction $(b)\Rightarrow(a)$ can be verified by direct calculation.
\end{proof}

\section{Further discussions}
\label{sec:2&3-large-mult}

Theorem \ref{CPn rigidity} shows that $(\mathbb{CP}^{n},\omega_{\mathbb{CP}^{n}})$ has the largest multiplicity $n^2+2n$ of the first eigenvalue among all K\"ahler manifolds with $\Ric(\omega)\geq\omega$ and $\lambda_1=1$, and there is nothing else with multiplicity in the range $[n^2+3,n^2+2n)$. It is then natural to ask which manifold achieves the second largest multiplicity $n^{2}+2$. The examples in Section \ref{examples} suggest that the answer should be the product space $\mathbb{CP}^{n-1}\times\mathbb{CP}^1$. To push even further, one could ask which manifold attains the third largest multiplicity $n^2+1$.

Our next result gives complete answers to these questions.
The proof relies on some general classification results for complex manifolds admitting biholomorphic isometry groups of dimension $n^2+2$ and $n^2+1$ \cite{I07,IK09}.

\begin{theorem}\label{CPn-1 CP1 rigidity}
Let $(M,\omega)$ be a compact K\"ahler manifold of complex dimension $n\geq2$ with $\Ric(\omega)\geq\omega$.
\begin{enumerate}\setlength{\itemsep}{1mm}
    \item[(a)] If $\lambda_{n^2+2}=1$ and $\lambda_{n^2+3}>1$, then $(M,\omega)$ is biholomorphically isometric to $(\mathbb{CP}^{n-1},\omega_{\mathbb{CP}^{n-1}})\times(\mathbb{CP}^1,\omega_{\mathbb{CP}^1})$.
    \item[(b)] If $\lambda_{n^2+1}=1$ and $\lambda_{n^2+2}>1$, then $n=3$ and $(M,\omega)$ is biholomorphically isometric to the quadric hypersurface in $\mathbb{CP}^4$ equipped with its canonical K\"ahler--Einstein metric.
\end{enumerate}
\end{theorem}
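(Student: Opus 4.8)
The plan is to translate the two eigenvalue hypotheses into an exact value for $\dim_{\mathbb{R}}\mathrm{Iso}(M,\omega)$ and then invoke the classification of complex manifolds by the dimension of their biholomorphic isometry group in \cite{I07,IK09}.

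First I would note that $\lambda_{n^2+1}=1$ in both cases: in (b) it is assumed outright, and in (a) it follows from $1\le\lambda_{n^2+1}\le\lambda_{n^2+2}=1$, where the lower bound is Theorem \ref{lambda 1 estimate}. Hence Proposition \ref{KE rigidity} gives $\Ric(\omega)=\omega$, so that $M$ is Fano and $[\omega]=2\pi c_1(M)$. Next, by the linear isomorphism $F$ of Lemma \ref{isomorphism}, $\dim_{\mathbb{R}}\mathrm{Iso}(M,\omega)$ equals $\dim_{\mathbb{R}}\Lambda_1(M,\omega)$, i.e. the multiplicity of the first eigenvalue. The two-sided hypotheses then pin this multiplicity down exactly: in (a) the conditions $\lambda_{n^2+2}=1$ and $\lambda_{n^2+3}>1$ force $\dim_{\mathbb{R}}\mathrm{Iso}(M,\omega)=n^2+2$, while in (b) the conditions $\lambda_{n^2+1}=1$ and $\lambda_{n^2+2}>1$ force $\dim_{\mathbb{R}}\mathrm{Iso}(M,\omega)=n^2+1$. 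It is precisely the upper gap condition that matters here: it rules out the larger multiplicities already handled in Theorem \ref{CPn rigidity} and in part (a), so the dimension is exactly as stated rather than merely bounded below.

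With the exact dimension in hand, I would apply Lemma \ref{transitive lemma} (both $n^2+2$ and $n^2+1$ are $\ge n^2+1$) to conclude that $\mathrm{Iso}(M,\omega)$ acts transitively, so $(M,\omega)$ is a compact homogeneous K\"ahler manifold; being Fano, it is a simply connected generalized flag manifold carrying an invariant K\"ahler metric. I would then invoke the classification of \cite{I07,IK09}: compactness together with $\Ric(\omega)=\omega$ selects a single entry from each list. In case (a) the only surviving possibility is $M\cong\mathbb{CP}^{n-1}\times\mathbb{CP}^1$, whose isometry group $PU(n)\times PU(2)$ indeed has real dimension $(n^2-1)+3=n^2+2$; in case (b) the list forces $n=3$ and $M\cong Q^3\subset\mathbb{CP}^4$, whose isometry group $SO(5)$ has dimension $10=n^2+1$.

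Finally, to upgrade this biholomorphism to a biholomorphic isometry I would invoke the uniqueness of K\"ahler--Einstein metrics in $2\pi c_1(M)$ (Bando--Mabuchi \cite{BM87}): since the product Fubini--Study metric on $\mathbb{CP}^{n-1}\times\mathbb{CP}^1$ and the canonical metric on $Q^3$ are themselves K\"ahler--Einstein with $\Ric=\omega$, any $\omega$ satisfying $\Ric(\omega)=\omega$ on these manifolds is biholomorphically isometric to the model. I expect the main obstacle to be the classification step: one must rely on the cited dimension-of-isometry-group classifications being exhaustive and, after discarding the non-compact and non-Fano entries, leaving exactly the asserted model --- in particular that in case (b) no flag manifold of dimension $n\ne3$ attains $\dim_{\mathbb{R}}\mathrm{Iso}(M,\omega)=n^2+1$. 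The remaining bookkeeping (the multiplicity/dimension count and the metric uniqueness) is routine given the results already established.
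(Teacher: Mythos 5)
Your proposal is correct and follows essentially the same route as the paper: use Proposition \ref{KE rigidity} to get $\Ric(\omega)=\omega$, identify the first-eigenvalue multiplicity with $\dim_{\mathbb{R}}\mathrm{Iso}(M,\omega)$ via Lemma \ref{isomorphism} (valid here since $\theta=0$, so $\mathfrak{h}(M,\omega)=\mathfrak{iso}(M,\omega)$), invoke the classifications of \cite{I07,IK09} with the Fano condition to single out $\mathbb{CP}^{n-1}\times\mathbb{CP}^1$ and the quadric $Q^3\subset\mathbb{CP}^4$, and finish with Bando--Mabuchi uniqueness. The only cosmetic difference is your detour through transitivity and flag manifolds; the paper instead simply notes that $G=\mathrm{Iso}(M,\omega)$ acts \emph{properly} on $M$ (automatic since $G$ is compact), which is the actual hypothesis the cited classification theorems require.
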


\begin{proof}
In either case, one has $\lambda_{n^2+1}=1$, it then follows from Proposition \ref{KE rigidity} that $\Ric(\omega)=\omega$, namely $(M,\omega)$ is a K\"ahler--Einstein Fano manifold.
	
	We can further determine the complex structure of $M$, thanks to the classification result in \cite{I07,IK09}. Indeed, note that the group $G:=\mathrm{Iso}(M,\omega)$ acts properly on $M$, in the sense that the map
	$$
	\Psi: G\times M\to M\times M,\ (g,p)\mapsto(gp,p),
	$$
	is proper, i.e., for every compact subset $C\subset M\times M$ its inverse image $\Psi^{-1}(C)\subset G\times M$ is compact as well. In case (a) we have $\dim_{\mathbb{R}}G=n^2+2$, while in case (b) we have $\dim_{\mathbb{R}}G=n^2+1$.
	Going through the classification list in \cite{I07,IK09} and using that our underlying manifold is Fano, we find that $M$ is bihilomorphic to $\mathbb{CP}^{n-1}\times\mathbb{CP}^1$ in case (a) (by \cite[Theorem 3.1]{I07}), and to the quadric in $\mathbb{CP}^4$ in case (b) (by \cite[Proposition 1.1 and Theorem 2.1]{IK09}). Note that both $\mathbb{CP}^{n-1}\times\mathbb{CP}^1$ and the quadric can be equipped with a K\"ahler--Einstein metric, which is unique up to automorphisms \cite{BM87}. So $M$ is further isometric to one of them, which then completes the proof.
	\end{proof}

Under the K\"ahler--Einstein assumption,	
we can further establish the corresponding almost rigidity result, sharpening our Theorem \ref{thm1}.

\begin{theorem}
	There exists a dimensional constant $\varepsilon=\varepsilon(n)>0$ such that the following holds. Let $(M,\omega)$ be a compact K\"ahler manifold of complex dimension $n\geq2$ with $\Ric(\omega)=\omega$.
\begin{enumerate}\setlength{\itemsep}{1mm}
\item[(a)] If $\lambda_{n^2+3}< 1+\varepsilon$, then $(M,\omega)$ is biholomorphically isometric to $(\mathbb{CP}^n,\omega_{\mathbb{CP}^n})$.
\item[(b)] If $\lambda_{n^2+2}< 1+\varepsilon$ and $\lambda_{n^2+3}>1$, then $(M,\omega)$ is biholomorphically isometric to $(\mathbb{CP}^{n-1},\omega_{\mathbb{CP}^{n-1}})\times(\mathbb{CP}^1,\omega_{\mathbb{CP}^1})$.
\item[(c)] If $\lambda_{n^2+1}< 1+\varepsilon$ and $\lambda_{n^2+2}>1$, then $n=3$ and $(M,\omega)$ is biholomorphically isometric to the quadric hypersurface in $\mathbb{CP}^4$ equipped with its canonical K\"ahler--Einstein metric.
\end{enumerate}
\end{theorem}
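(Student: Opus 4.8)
The plan is to treat part (a) as already established---it is precisely Theorem \ref{thm1}---and to prove parts (b) and (c) by a compactness/contradiction argument running parallel to the proof of Theorem \ref{thm1}, with the exact rigidity input Theorem \ref{CPn rigidity} supplemented by the sharper classification Theorem \ref{CPn-1 CP1 rigidity}. Concretely, for part (b) I would suppose the conclusion fails and extract a sequence $(M_i,\omega_i)$ of K\"ahler--Einstein manifolds with $\Ric(\omega_i)=\omega_i$, $\lambda_{n^2+2}^{\omega_i}<1+\varepsilon_i$ where $\varepsilon_i\to0$, and $\lambda_{n^2+3}^{\omega_i}>1$, none of which is biholomorphically isometric to $(\mathbb{CP}^{n-1},\omega_{\mathbb{CP}^{n-1}})\times(\mathbb{CP}^1,\omega_{\mathbb{CP}^1})$; for part (c) the hypotheses become $\lambda_{n^2+1}^{\omega_i}<1+\varepsilon_i$ and $\lambda_{n^2+2}^{\omega_i}>1$, with $M_i$ not isometric to the quadric.

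In either case I pass to a Gromov--Hausdorff limit $(Z,d)=\mathcal{R}\cup\mathcal{S}$ using the K\"ahler--Einstein convergence theory recalled in Section \ref{sec:conv}. Since every eigenvalue $\lambda_j^{\omega_i}$ with $j\le n^2+2$ (resp.\ $j\le n^2+1$) lies in $[1,1+\varepsilon_i)$, it tends to $1$, so the corresponding eigenfunctions converge on $Z$ and, by Lemma \ref{Killing}, produce that many linearly independent imaginary parts of holomorphic vector fields on $Z$; exactly as in the proof of Theorem \ref{thm1} this yields $\dim_{\mathbb{R}}\mathrm{Iso}(Z,d)\ge n^2+2$ (resp.\ $\ge n^2+1$). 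Since in both cases $\dim_{\mathbb{R}}\mathrm{Iso}(Z,d)\ge n^2+1$, Lemma \ref{transitive lemma} shows the isometric action is transitive on $\mathcal{R}$, which forces $\mathcal{R}=Z$; hence $Z$ is a \emph{smooth} Fano K\"ahler--Einstein manifold carrying $\omega_\infty$ with $\lambda_{n^2+2}^{\omega_\infty}=1$ (resp.\ $\lambda_{n^2+1}^{\omega_\infty}=1$).

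The crux is then to identify $Z$ and transport the conclusion back to $M_i$. The multiplicity of the eigenvalue $1$ on the smooth K\"ahler--Einstein manifold $Z$ is at least $n^2+2$ (resp.\ $n^2+1$), and by Theorems \ref{CPn rigidity} and \ref{CPn-1 CP1 rigidity} this pins $Z$ down to a short list: if the multiplicity is $\ge n^2+3$ then $Z\cong\mathbb{CP}^n$; if it equals $n^2+2$ then $Z\cong\mathbb{CP}^{n-1}\times\mathbb{CP}^1$; and if it equals $n^2+1$ then $n=3$ and $Z$ is the quadric in $\mathbb{CP}^4$. In each case $Z$ is rigid under deformations (the analogue for $\mathbb{CP}^{n-1}\times\mathbb{CP}^1$ and the quadric of the rigidity of $\mathbb{CP}^n$ used in Theorem \ref{thm1}, since $H^1(Z,T_Z)=0$), so the degeneration of \cite{DS14} together with the uniqueness of K\"ahler--Einstein metrics \cite{BM87} shows that $(M_i,\omega_i)$ is biholomorphically isometric to $(Z,\omega_\infty)$ for large $i$.

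Finally I would rule out every ``wrong'' limit using the strict hypotheses. In part (b): $Z\cong\mathbb{CP}^{n-1}\times\mathbb{CP}^1$ directly contradicts the choice of the sequence, while $Z\cong\mathbb{CP}^n$ forces $M_i\cong\mathbb{CP}^n$, whose multiplicity $n^2+2n\ge n^2+3$ gives $\lambda_{n^2+3}^{\omega_i}=1$, contradicting $\lambda_{n^2+3}^{\omega_i}>1$. In part (c): $Z$ the quadric contradicts the choice of the sequence, whereas $Z\cong\mathbb{CP}^{n-1}\times\mathbb{CP}^1$ or $Z\cong\mathbb{CP}^n$ forces $\lambda_{n^2+2}^{\omega_i}=1$, contradicting $\lambda_{n^2+2}^{\omega_i}>1$. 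The main obstacle is exactly this last point: the separation hypotheses $\lambda_{n^2+3}>1$ and $\lambda_{n^2+2}>1$ are not quantitative, so one cannot assume the gap survives in the Gromov--Hausdorff limit. The device that saves the argument is to enumerate all admissible limits and eliminate the spurious ones \emph{after} pulling the isometry type back to $M_i$ via rigidity and Bando--Mabuchi uniqueness, rather than attempting to control $\lambda_{n^2+3}^{\omega_\infty}$ (resp.\ $\lambda_{n^2+2}^{\omega_\infty}$) directly.
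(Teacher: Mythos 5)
Your proposal is correct and follows essentially the same route as the paper's proof: argue by contradiction, pass to the K\"ahler--Einstein Gromov--Hausdorff limit $Z=\mathcal{R}\cup\mathcal{S}$, use the $n^2+1$ (or more) limiting Killing fields and Lemma \ref{transitive lemma} to get transitivity and hence $Z=\mathcal{R}$ smooth, identify $Z$ via the classification results, and transport the conclusion back to $M_i$ by deformation rigidity together with Bando--Mabuchi uniqueness. In fact your handling of the spurious limits is more careful than the paper's admittedly sketchy write-up: the paper lists only $\mathbb{CP}^{n-1}\times\mathbb{CP}^1$ and the quadric as possible limits, whereas (as you correctly observe) $Z\cong\mathbb{CP}^n$ can a priori also occur in case (b), and $Z\cong\mathbb{CP}^n$ or $\mathbb{CP}^{n-1}\times\mathbb{CP}^1$ in case (c), since only a lower bound on $\dim_{\mathbb{R}}\mathrm{Iso}(Z,d)$ survives the limit; your device of eliminating these only \emph{after} pulling the isometry type back to $M_i$, where the strict hypotheses $\lambda_{n^2+3}>1$, resp.\ $\lambda_{n^2+2}>1$, yield the contradiction, is exactly the right way to close that gap.
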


\begin{proof}
It suffices to prove (b) and (c), as (a) is already settled in Theorem \ref{thm1}.
The proof for these two cases will be almost the same as that for (a), so we will be sketchy. We argue by contradiction. After passing to the limit space $Z=\mathcal{R}\cup\mathcal S$, we claim that one still has $Z=\mathcal R$. Indeed, note that $\dim_{\mathbb R}\mathrm{Iso}(Z,d)\geq n^2+1$. So the action of $\mathrm{Iso}(Z,d)$ on $\mathcal R$ is transitive by Lemma \ref{transitive lemma}, which implies that $Z=\mathcal R$, as desired. Therefore, the limit space $(Z,d)$ is a K\"ahler--Einstein Fano manifold. Using \cite{I07,IK09} again we conclude that $Z$ is biholomorphic to one of the following two possibilities:
	\begin{enumerate}\setlength{\itemsep}{1mm}
		\item $\mathbb{CP}^{n-1}\times\mathbb{CP}^1$.
		\item the quadric in $\mathbb{CP}^4$.
	\end{enumerate}
Note that $\mathbb{CP}^{n-1}\times\mathbb{CP}^1$ is rigid under deformations \cite[Proposition 4.5]{P22}, thus proving (b). The quadric is also rigid, by \cite[Proposition 4.2]{BB96}, which then proves (c).
\end{proof}

\newpage

\end{document}